\DeclareMathAlphabet{\mathpzc}{OT1}{pzc}{m}{it}
\DeclareFontShape{T1}{calligra}{m}{n}{<->s*[1.2]callig15}{}
\DeclareMathAlphabet{\mathcalligra}{T1}{calligra}{m}{n}
\newtheorem{Def}{Definition}[section]
\newtheorem{Teo}[Def]{Theorem}
\newtheorem{Lema}[Def]{Lemma}
\newtheorem{Prop}[Def]{Proposition}
\newtheorem{Cor}[Def]{Corollary}
\newtheorem{Rem}[Def]{Remark}
\newtheorem{Theo}{Theorem}
\newcommand*{\fun}[3]{#1: #2 \rightarrow #3}
\newcommand*{\ccomp}[1]{\mathcal{C}(#1)}
\newcommand*{\Mod}[1]{\mathrm{Mod}(#1)}
\newcommand*{\EMod}[1]{\mathrm{Mod}^{*}(#1)}
\newcommand*{\stab}[1]{\mathrm{stab}_{pt}(#1)}
\newcommand*{\nat}{\mathbb{N}}
\newcommand*{\X}{\mathfrak{X}}
\newcommand*{\Cf}{\mathscr{C}}
\newcommand*{\Cfn}{\Cf_{0}}
\newcommand*{\Cff}{\Cf_{f}}
\newcommand*{\Bf}{\mathscr{B}}
\newcommand*{\Bfn}{\Bf_{0}}
\newcommand*{\Sf}{\mathscr{S}}
\newcommand*{\Mf}{\mathscr{A}}
\newcommand*{\Gf}{\mathscr{G}}
\newcommand*{\Df}{\mathscr{D}}
\newcommand*{\Ef}{\mathscr{E}}
\newcommand*{\Hff}{\mathscr{H}}
\newcommand*{\Hcf}{H_{\Cf}}
\newcommand*{\Hcp}{\Hcf^{+}}
\newcommand*{\N}[3]{N_{#1}^{#2, \hspace{0.05cm} #3}}
\newcommand*{\lk}[1]{\mathcalligra{lk} \hspace{0.12cm}(#1)}
\newcommand*{\eps}[2]{\epsilon^{#1, \hspace{0.05cm} #2}}
\newcommand*{\epsp}[3]{\epsilon_{#1}^{(#2,#3)}}
\newcommand*{\ColonEqq}{\mathrel{\mathop:}=}
\newcommand*{\biprime}{\prime\prime}
\newcommand*{\Aut}[1]{\mathrm{Aut}(#1)}
\author{Jes\'{u}s Hern\'{a}ndez Hern\'{a}ndez}
\title{Exhaustion of the curve graph via rigid expansions}
\date{}
\begin{document}
\maketitle
\begin{abstract}
 For an orientable surface $S$ of finite topological type with genus $g \geq 3$, we construct a finite set of curves whose union of iterated rigid expansions is the curve graph $\ccomp{S}$. The set constructed, and the method of rigid expansion, are closely related to Aramayona and Leiniger's finite rigid set in \cite{Ara1} and \cite{Ara2}, and in fact a consequence of our proof is that Aramayona and Leininger's set also exhausts the curve graph via rigid expansions.
\end{abstract}
\section*{Introduction}
\indent In this article we consider an orientable surface $S_{g,n}$ of finite topological type with genus $g \geq 3$ and $n \geq 0$ punctures. The mapping class group of $S_{g,n}$, denoted by $\Mod{S_{g,n}}$ is the group of orientation preserving self-homeomorphisms of $S_{g,n}$. The extended mapping class group of $S_{g,n}$, denoted by $\EMod{S_{g,n}}$ is the group of isotopy classes of self-homeomorphisms of $S_{g,n}$.\\
\indent In order to study these groups, Harvey in 1979 (see \cite{Harvey}) introduced the curve complex of a surface as the simplicial complex whose vertices are isotopy classes of essential curves, and simplices are defined by disjointness (see Section \ref{prelim} for details). We call the $1$-skeleton of the curve complex the \textit{curve graph}, which we denote by $\ccomp{S_{g,n}}$.\\
\indent There is a natural link between the curve complex and $\Mod{S_{g,n}}$ and $\EMod{S_{g,n}}$. Ivanov (in \cite{Ivanov}) linked the curve complex to $\EMod{S_{g,n}}$ via simplicial automorphisms, while Harer (in \cite{Harer}) linked the curve complex with $\Mod{S_{g,n}}$ by their (co-)homology.\\
\indent On one hand, in \cite{Ivanov}, \cite{Korkmaz} and \cite{Luo} it was proved that for most surfaces every automorphism of the curve graph is induced by a homeomorphism of $S_{g,n}$, with the well-known exception of $S_{1,2}$. Later on, there were generalizations of this result for larger classes of simplicial maps (see \cite{Irmak1}, \cite{Irmak2}, \cite{Irmak3}, \cite{BehrMar}), until Shackleton (see \cite{Shack}) proved that any locally injective self-map of the curve graph is induced by a homeomorphism (for surfaces of high-enough complexity).\\
\indent Thereafter, Aramayona and Leininger introduced in \cite{Ara1} the concept of a rigid set of the curve graph, which is a full subgraph $Y$ such that any locally injective map from $Y$ to $\ccomp{S_{g,n}}$ is the restriction to $Y$ of an automorphism, unique up to the pointwise stabilizer of $Y$ in $\Aut{\ccomp{S_{g,n}}}$. By Shackleton's result, the curve graph itself is a rigid set. In \cite{Ara1} they also construct a finite rigid set for any orientable surface of finite topological type. See Section \ref{chap3} below.\\
\indent On the other hand, it is a well-known result by Harer \cite{Harer} that the curve complex is homotopically equivalent to a bouquet of spheres, which is used to determine the virtual cohomological dimension of the mapping class group.\\
\indent Later on, Birman, Broaddus and Menasco in \cite{Broaddus} proved that Aramayona and Leininger's finite rigid set either is (for $g = 0$ and $n \geq 5$) or contains (for $g \geq 1$ and $n \leq 1$) a $\Mod{S_{g,n}}$-module generator of the reduced homology of the curve complex. Thus, they link the (co-)homological and simplicial sides of the study of the mapping class group and curve complexes.\\
\indent Afterwards, Aramayona and Leininger proved in \cite{Ara2} that for almost all surfaces of finite topological type, there exists an increasing sequence of finite rigid sets that exhaust the curve graph, each of which has trivial pointwise stabilizer in $\EMod{S_{g,n}}$. Note that this is \textbf{not trivial}, given that there exist examples of supersets of a rigid set that are not rigid themselves.\\
\indent While their proof is \textit{effective} for the result, it does not lend itself to improving other results concerning simplicial maps. In this work, we prove a similar result to theirs; however, we use a method developed in \cite{Ara2} for expanding subgraphs. This method can be used to obtain new results concerning edge-preserving maps; the details of these results are given in \cite{Thesis} and will appear in a second paper \cite{JHH2}. We call this method \textit{rigid expansion}.\\
\indent We define the first rigid expansion of a subgraph $Y$, denoted as $Y^{1}$, as the union of $Y$ with all the curves uniquely determined by subsets of $Y$, where a curve $\beta$ is uniquely determined by a subset $B$ of $\ccomp{S_{g,n}}$ if it is the unique curve disjoint from every element in $B$. We also define $Y^{0} = Y$ and, inductively, $Y^{k} = (Y^{k-1})^{1}$.\\
\indent Note in particular that if $\beta$ is uniquely determined by $B$, then for every $h \in \EMod{S_{g,n}}$ we have that $h(\beta)$ is uniquely determined by $h(B)$.\\
\indent Now we can state the main result of this work.
\begin{Theo}\label{TheoA}
 Let $S_{g,n}$ be an orientable surface of finite topological type with genus $g \geq 3$, $n \geq 0$ punctures, and empty boundary. There exists a finite subgraph of $\ccomp{S_{g,n}}$ whose union of iterated rigid expansions is equal to $\ccomp{S_{g,n}}$.
\end{Theo}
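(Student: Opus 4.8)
The plan is to take as seed a finite set $Y$ essentially equal to (a mild enlargement of) the Aramayona--Leininger rigid set $\X$ of \cite{Ara1}, and to show that $Z \ColonEqq \bigcup_{k\ge 0} Y^{k}$ equals $\ccomp{S_{g,n}}$. Two preliminary remarks organize everything. First, each $Y^{k}$ is finite: a curve added in passing from $Y^{k-1}$ to $Y^{k}$ is uniquely determined by some subset of the finite set $Y^{k-1}$, and distinct such curves come from distinct subsets, so at most $2^{|Y^{k-1}|}$ curves are added at each stage. Second, $Z$ is \emph{closed under expansion}: since $Y^{k}\subseteq Y^{k+1}$ we have $Z^{1}=\bigcup_{k} Y^{k+1}=Z$. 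Consequently it suffices to prove that $Z$ cannot be a proper subgraph, i.e.\ that for \emph{every} curve $\beta$ there is a finite $B\subseteq Z$ such that $\beta$ is the unique curve disjoint from every element of $B$. I will use throughout the elementary reformulation of this condition: $\beta$ is uniquely determined by $B$ exactly when the curves of $B$ fill the complement of an annular neighbourhood of $\beta$ (equivalently, $B\cup\{\beta\}$ fills $S_{g,n}$ and $\beta$ is the only essential curve disjoint from all of $B$). Thus the whole problem reduces to: \emph{for each $\beta$, produce inside $Z$ a family of curves, all disjoint from $\beta$, that fills $S_{g,n}\setminus\beta$.}

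The engine is a single local move together with an induction that repeatedly applies it. The local move is the observation just recorded: if a family $B\subseteq Y^{k}$ of curves disjoint from $\beta$ already fills the complement of $\beta$, then $\beta\in Y^{k+1}\subseteq Z$. To feed this move I would exploit two structural facts. First, there are only finitely many topological types of curves in $S_{g,n}$, the extended mapping class group $\EMod{S_{g,n}}$ acts transitively on each, and---by the equivariance noted before the statement of Theorem~\ref{TheoA}---the relation ``$\beta$ is uniquely determined by $B$'' is preserved by this action; hence it is enough to exhibit the required disjoint filling family for one representative of each type, provided the representatives are reached and the property is then propagated along the curve graph. Second, $\ccomp{S_{g,n}}$ is connected, which lets me pass from curves already in $Z$ to their neighbours in controlled steps.

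Concretely I would run the induction in stages ordered by a well-founded complexity---for instance the distance in $\ccomp{S_{g,n}}$ to the seed, or the geometric intersection number with a fixed filling system carried by $Y$. In the base stage one records that $Y$, being (essentially) the rigid set, already fills $S_{g,n}$ and contains a standard configuration whose first expansions capture all curves of small complexity. The next stage captures every nonseparating curve $\beta$: its complement $S_{g,n}\setminus\beta$ has genus $g-1\ge 2$, so it is rich enough to contain a filling family of curves, each of strictly smaller complexity and hence already in $Z$ by the inductive hypothesis; the local move then places $\beta$ in $Z$. The final stage captures the separating curves: cutting along such a $\beta$ yields two subsurfaces, each of positive genus because $g\ge 3$, and a filling family for $S_{g,n}\setminus\beta$ can be assembled from nonseparating curves supported in these pieces, all of which were captured in the previous stage. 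Combining the stages shows every curve lies in $Z$, which is Theorem~\ref{TheoA}.

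The step I expect to be the main obstacle is the synchronisation inside the induction: the local move only deposits $\beta$ into $Z$ once a complement-filling family for $\beta$ \emph{already} lies in $Z$, so the proof stands or falls on choosing the complexity so that the complement of any curve can be filled by strictly simpler curves. Verifying this---producing, for each topological type, an explicit disjoint family in the complement whose members provably appear at an earlier stage---is where the real work concentrates, and it is precisely here that the hypothesis $g\ge 3$ enters, guaranteeing that every complement (of a nonseparating or a separating curve) retains enough genus to host such a family and ruling out the degenerate low-genus configurations where unique determination fails. Controlling these configurations by means of the links and subsurface data developed in the preliminaries, together with checking the base cases of the induction near the seed, constitutes the remaining technical content.
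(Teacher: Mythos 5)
Your skeleton is sound as far as it goes: the finiteness of each $Y^{k}$, the observation that $Z=\bigcup_{k}Y^{k}$ is closed under expansion (with the caveat that an infinite determining set $B\subset Z$ need not lie in any single $Y^{k}$, so you should extract a finite determining subfamily before invoking closure), the reformulation of ``uniquely determined'' as ``fills the complement of an annular neighbourhood of $\beta$'', and the final reduction of separating curves to nonseparating ones, which is exactly the paper's last step (Figure \ref{ChainsProofFig}: a separating curve is uniquely determined by two chains of nonseparating curves). The genuine gap is the middle stage, and you have pointed at it yourself: you never prove, nor sketch, the descent statement that the complement of an arbitrary nonseparating curve $\beta$ can be filled by curves of \emph{strictly smaller} complexity, for either of your proposed complexities. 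For distance to the seed this is not believable without argument: if $d(\beta,Y)=d$, the curves disjoint from $\beta$ and lying at distance at most $d-1$ from $Y$ form a heavily constrained set, and there is no reason they fill $S\setminus\beta$. For intersection number with a fixed filling system the analogous claim is equally unsupported. Without this lemma the induction never starts, and your appeal to transitivity of $\EMod{S_{g,n}}$ does not repair it: equivariance gives $h(\beta)=\langle h(B)\rangle$, but to conclude $h(\beta)\in Z$ you need $h(B)\subset Z$, which is precisely what is being proved --- the argument as stated is circular.

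The paper closes exactly this gap by a different propagation mechanism, and this is where essentially all of its work lies. Instead of inducting on a curve-graph complexity, it inducts on word length in a Humphries--Lickorish generating set of $\Mod{S_{g,n}}$: it fixes a principal set ($\Cf\cup\Bf$ in the closed case, $\Cf\cup\Bfn$ in the punctured case) and proves the key technical Lemma \ref{CUBen4} (resp.\ Lemma \ref{CUBen12}) that each generating Dehn twist (and half-twist) maps the principal set into a \emph{bounded} rigid expansion of itself, e.g.\ $\tau_{\Gf}^{\pm 1}(\Cf\cup\Bf)\subset(\Cf\cup\Bf)^{4}$. Combined with equivariance of unique determination, induction on word length gives $h(\Cf\cup\Bf)\subset(\Cf\cup\Bf)^{4N}$ for any $h$ that is a product of $N$ generators, so every nonseparating curve --- being $h(\alpha)$ for a generator curve $\alpha$ --- lies in some finite expansion; separating curves then follow as in your last stage. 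Proving that key lemma is the bulk of the paper (the explicit configurations in Claims 1--3 of Section \ref{chap1} and Claims 1--6 of Section \ref{chap2}). Your proposal has no substitute for it, so what you defer as ``remaining technical content'' is not a verification but the actual core of the theorem.
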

\indent The proof of Theorem \ref{TheoA} is divided into two cases: the closed surface case (see Theorem \ref{Thm2} in Section \ref{chap1}) and the punctured surface case (see Theorem \ref{Thm3} in Section \ref{chap2}). We begin by defining a particular set of curves (based on the rigid set introduced in \cite{Ara1}) that we call the \textit{principal set}, and use a Humphries-Lickorish generating set of $\Mod{S_{g,n}}$ to see that the positive and negative translations of the principal set are contained in some rigid expansion of it (the principal set); afterwards, the iterated use of this result allows us to see that most topological types of curves are in some rigid expansion, while the rest of the topological types are uniquely determined by finite sets of curves from the previous cases.\\
\indent Afterwards, in Section \ref{chap3} we reintroduce the rigid set of \cite{Ara1}, denoted by $\X(S_{g,n})$. Note that while Birman, Broaddus and Menasco's homological spheres in \cite{Broaddus} (which is a subset of $\X(S_{g,n})$ for $g \geq 1$ and $n \leq 1$) are not contained in the principal set of a closed surface, they \textit{are} contained in their first rigid expansion. Then, we use Theorem \ref{TheoA} to obtain an analogous result for Aramayona and Leininger's finite rigid set.
\begin{Theo}\label{Xexhausts}
 Let $S_{g,n}$ be an orientable surface of genus $g \geq 3$, $n \geq 0$ punctures and empty boundary. Then $\bigcup_{i \in \nat} \X(S_{g,n})^{i} = \ccomp{S_{g,n}}$.
\end{Theo}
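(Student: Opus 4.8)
The plan is to deduce Theorem \ref{Xexhausts} from Theorem \ref{TheoA} by comparing the Aramayona--Leininger set $\X(S_{g,n})$ with the principal set $P$ produced in the proof of Theorem \ref{TheoA}, and showing that $P$ is swallowed by a fixed finite rigid expansion of $\X(S_{g,n})$. Two elementary structural properties of rigid expansion will do most of the bookkeeping. First, \emph{monotonicity}: if $A \subseteq B$ then $A^{1} \subseteq B^{1}$, since any curve uniquely determined by a subset of $A$ is uniquely determined by that same subset viewed inside $B$, and $A \subseteq B \subseteq B^{1}$; by induction $A \subseteq B$ implies $A^{k} \subseteq B^{k}$ for every $k \in \nat$. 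Second, \emph{additivity of the exponent}: $(Y^{k})^{j} = Y^{k+j}$, which is immediate from the inductive definition $Y^{k} = (Y^{k-1})^{1}$. Note also that $Y \subseteq Y^{1}$ always, so the sequence $\X(S_{g,n})^{i}$ is increasing.

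Granting these, the argument reduces to a single containment. Suppose we can show that $P \subseteq \X(S_{g,n})^{k}$ for some fixed $k \in \nat$. Then, applying monotonicity and additivity,
\begin{equation*}
 P^{j} \subseteq \bigl(\X(S_{g,n})^{k}\bigr)^{j} = \X(S_{g,n})^{k+j} \subseteq \bigcup_{i \in \nat} \X(S_{g,n})^{i} \quad \text{for every } j \in \nat ,
\end{equation*}
so that $\bigcup_{j \in \nat} P^{j} \subseteq \bigcup_{i \in \nat} \X(S_{g,n})^{i}$. Since the left-hand side equals $\ccomp{S_{g,n}}$ by Theorem \ref{TheoA} (in its two incarnations, Theorem \ref{Thm2} for the closed case and Theorem \ref{Thm3} for the punctured case), and since $\X(S_{g,n})^{i} \subseteq \ccomp{S_{g,n}}$ trivially for all $i$, we obtain the chain $\ccomp{S_{g,n}} \subseteq \bigcup_{i} \X(S_{g,n})^{i} \subseteq \ccomp{S_{g,n}}$, and Theorem \ref{Xexhausts} follows.

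The substance of the proof is therefore the containment $P \subseteq \X(S_{g,n})^{k}$. First I would recall the explicit descriptions of both sets from Section \ref{chap3} and from the construction underlying Theorem \ref{TheoA}, and isolate the finitely many curves of $P$ that do not already belong to $\X(S_{g,n})$. For each such curve $\beta$ I would exhibit a finite subset $B \subseteq \X(S_{g,n})$ (or, if one expansion step does not suffice, a subset of an earlier iterate $\X(S_{g,n})^{k-1}$) such that $\beta$ is the unique curve disjoint from every element of $B$, whence $\beta \in \X(S_{g,n})^{k}$ by definition of rigid expansion. This is precisely the kind of ``uniquely determined'' verification already carried out for the principal set in the proofs of Theorems \ref{Thm2} and \ref{Thm3}, transported to the configuration of $\X(S_{g,n})$; the remark that Birman--Broaddus--Menasco's homological spheres lie in the first rigid expansion (rather than in the principal set itself) signals that a small, bounded number of expansion steps is both necessary and sufficient.

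The main obstacle I anticipate is this last step, in two respects. The geometric difficulty is checking, curve by curve and for each topological type appearing in $P$, that the chosen defining subset of $\X(S_{g,n})$ really does determine the curve \emph{uniquely}, and not merely up to a swap of two symmetric candidates; this is where the genus $g \geq 3$ hypothesis and the precise combinatorics of the Aramayona--Leininger configuration are used. The bookkeeping difficulty is handling the closed and punctured cases separately, since $\X(S_{g,n})$ and $P$ are built slightly differently in the two settings, and confirming that a single finite exponent $k$ works uniformly across the finitely many topological types. Once these finitely many determinations are in place, the union argument above closes the proof with no further analysis.
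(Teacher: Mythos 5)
Your proposal is correct and takes essentially the same route as the paper: monotonicity of rigid expansion under inclusion, combined with Theorem \ref{TheoA} (Theorems \ref{Thm2} and \ref{Thm3}) and the trivial containment $\X(S_{g,n})^{i} \subseteq \ccomp{S_{g,n}}$. The only point you miss is that the containment you single out as the ``substance'' of the proof, $P \subseteq \X(S_{g,n})^{k}$, holds with $k=0$ by construction: in the closed case $\X(S) = \Cf \cup \Bf \cup \Sf \cup \Mf \supseteq \Cf \cup \Bf$, and in the punctured case $\X(S) = \Cf \cup \Df \cup \Sf_{T} \cup \Sf_{0} \cup \Bf_{T} \cup \Bf_{0} \cup \Mf \supseteq \Cf \cup \Bfn$, so there are no curves of the principal set outside $\X(S_{g,n})$ and the curve-by-curve unique-determination work you anticipate (and the concern about a uniform exponent across topological types, as well as the remark about the Birman--Broaddus--Menasco spheres, which concerns containment in the opposite direction) is vacuous.
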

\indent We must remark that this work is the published version of the first two chapters of the author's Ph.D. thesis, and as was mentioned before these results are used to obtain new results on simplicial maps of different graphs. In particular we use these results in \cite{JHH2} to prove that under certain conditions on the surfaces, all edge-preserving maps between a priori different curve graphs are actually induced by homeomorphisms between the underlying surfaces.\\[0.3cm]
\textbf{Acknowledgements:} The author thanks his Ph.D. advisors, Javier Aramayona and Hamish Short, for their very helpful suggestions, talks, corrections, and specially for their patience while giving shape to this work.
\section{Preliminaries}\label{prelim}
\indent We suppose $S_{g,n}$ is an orientable surface of finite topological type with empty boundary, genus $g \geq 3$ and $n$ punctures. The \textit{mapping class group of} $S_{g,n}$, denoted by $\Mod{S_{g,n}}$, is the group of isotopy classes of orientation preserving self-homeomorphisms of $S_{g,n}$; the \textit{extended mapping class of} $S_{g,n}$, denoted by $\EMod{S_{g,n}}$, is the group of isotopy classes of \textit{all} self-homeomorphisms of $S_{g,n}$. Note that $\Mod{S_{g,n}}$ is an index $2$ subgroup of $\EMod{S_{g,n}}$.\\
\indent A \textit{curve} $\alpha$ is the topological embedding of the unit circle into the surface. We often abuse notation and call ``curve'' the embedding, its image on $S_{g,n}$ or its isotopy class. The context makes clear which use we mean.\\
\indent A curve is \textit{essential} if it is neither null-homotopic nor homotopic to the boundary curve of a neighbourhood of a puncture.\\
\indent The (geometric) intersection number of two (isotopy classes of) curves $\alpha$ and $\beta$ is defined as follows: $$i(\alpha,\beta) \ColonEqq \min \{|a \cap b| : a \in \alpha, b \in \beta\}.$$
\indent Let $\alpha$ and $\beta$ be two curves on $S_{g,n}$. As a convention for this work, we say $\alpha$ and $\beta$ are \textit{disjoint} if $i(\alpha,\beta) = 0$ \textbf{and} $\alpha \neq \beta$.\\
\indent Under the conditions on $S_{g,n}$ imposed above, we define the \textit{curve graph of} $S_{g,n}$, denoted by $\ccomp{S_{g,n}}$, as the simplicial graph whose vertices are the isotopy classes of essential curves on $S_{g,n}$, and two vertices span an edge if the corresponding curves are disjoint.\\
\indent Let $\beta$ be an essential curve on $S_{g,n}$ and $B$ a set of curves on $S_{g,n}$. We say $\beta$ is \textit{uniquely determined by} $B$, denoted $\beta = \langle B \rangle$, if $\beta$ is the unique essential curve on $S_{g,n}$ that is disjoint from every element in $B$, i.e. $$\{\beta\} = \bigcap_{\gamma \in B} \lk{\gamma},$$ where $\lk{\gamma}$ denotes the link of $\gamma$ in $\ccomp{S_{g,n}}$.\\
\indent Let $Y \subset \ccomp{S_{g,n}}$; the first \textit{rigid expansion} of $Y$ is defined as $$Y^{1} \ColonEqq Y \cup \{\beta: \beta = \langle B \rangle, B \subset Y\};$$ we also define $Y^{0} = Y$ and, inductively, $Y^{k} = (Y^{k-1})^{1}$.\\
\section{Closed surface case}\label{chap1}
In this section, we suppose that $S$ is a closed surface of genus $g \geq 3$. This section is divided as follows: Subsection \ref{Chap1Sec1} gives some definitions, fixes the principal set, states the main result of the section, and gives the proof of said result pending the proof of a technical lemma; Subsections \ref{subsec4-2}, \ref{subsec4-3}, and \ref{Chap1Sec4} give the proofs of the claims for the technical lemma.
\subsection{Statement and proof of Theorem \ref{Thm2}}\label{Chap1Sec1}
\indent Let $k \in \mathbb{Z}^{+}$ and $C = \{\gamma_{0}, \ldots, \gamma_{k}\}$ be an ordered set of $k+1$ curves in $S$. It is called a \textit{chain} of length $k+1$ if $i(\gamma_{i},\gamma_{i+1}) = 1$ for $0 \leq i \leq k-1$, and $\gamma_{i}$ is disjoint from $\gamma_{j}$ for $|i - j| > 1$. On the other hand, $C$ is called a \textit{closed chain} of length $k+1$ if $i(\gamma_{i},\gamma_{i+1}) = 1$ for $0 \leq i \leq k$ modulo $k+1$, and $\gamma_{i}$ is disjoint from $\gamma_{j}$ for $|i - j| > 1$ (modulo $k+1$); a closed chain is maximal if it has length $2g+2$. A \textit{subchain} is an ordered subset of either a chain or a closed chain which is itself a chain, and its length is its cardinality.\\
\indent Recalling that $k \geq 1$, note that if $C$ is a chain (or a subchain), then every element of $C$ is a nonseparating curve. Also, if $C$ has odd length, a closed regular neighbourhood $N(C)$ has two boundary components; we call these curves the bounding pair associated to $C$.\\
\indent Let $\Cf = \{\alpha_{0}, \ldots, \alpha_{2g+1}\}$ be the closed chain in $S$ depicted in Figure \ref{OriginalChainv2}. Observe it is a maximal closed chain, and given any other maximal closed chain $C$ there exists an element of $\Mod{S}$ that maps $C$ to $\Cf$ (see \cite{FarbMar}).\\
\indent We define the set $\Bf$ as the union of the bounding pairs associated to the subchains of odd length of $\Cf$.
\begin{figure}[h]
\begin{center}
 \includegraphics[width=9cm]{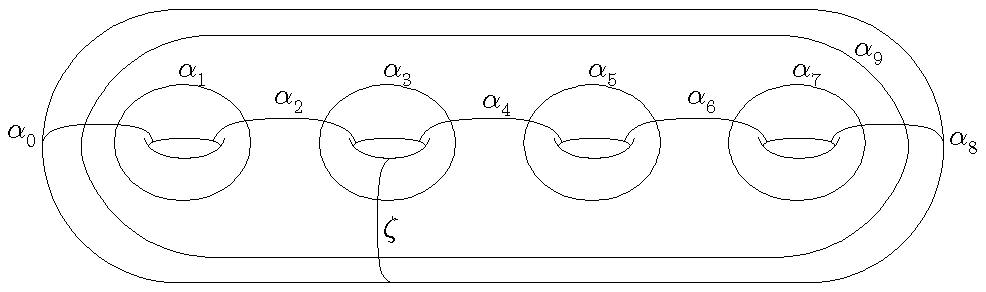} \caption{The set $\Cf = \{\alpha_{0}, \ldots, \alpha_{2g+1}\}$ and the curve $\zeta$ -- one of the curves of the bounding pair associated to $\{\alpha_{0}, \alpha_{1}, \alpha_{2}\}$.} \label{OriginalChainv2} 
\end{center}
\end{figure}\\
\indent Now we are able to state the main result for the closed surface case.
\begin{Teo}\label{Thm2}
 Let $S$ be an orientable closed surface with genus $g \geq 3$, and let $\Cf$ and $\Bf$ be defined as above. Then $\bigcup_{i \in \nat} (\Cf \cup \Bf)^{i} = \ccomp{S}$.
\end{Teo}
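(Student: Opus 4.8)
The plan is to fix once and for all the principal set $P = \Xc$ and to exploit two elementary but crucial features of rigid expansion. First, expansion is \emph{monotone}: if $A \subseteq B$ then $A^{i} \subseteq B^{i}$ for every $i$, since any curve cut out by a subset of $A$ is a fortiori cut out by a subset of $B$, and $A \subseteq B \subseteq B^{i}$. Second, expansion is \emph{equivariant}: for every $h \in \EMod{S}$ one has $h(Y^{i}) = (h(Y))^{i}$, which follows by induction on $i$ from the remark in the introduction that $\beta = \langle B \rangle$ implies $h(\beta) = \langle h(B) \rangle$. I would record these two facts at the outset, since they are exactly what allow a purely local statement about generators to propagate to the whole group.

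The heart of the argument is a single technical lemma: there is a constant $k_{0}$ such that for every Dehn twist $\tau$ in a fixed Humphries--Lickorish generating set of $\Mod{S}$, both $\tau(\Xc)$ and $\tau^{-1}(\Xc)$ are contained in $\Xc^{k_{0}}$. Granting this, the propagation step is formal. Given any $h \in \Mod{S}$, write it as a word $\tau_{m}^{\pm 1} \cdots \tau_{1}^{\pm 1}$ in the generators and induct on the word length $m$: if $h'(\Xc) \subseteq \Xc^{N}$ for the length-$(m-1)$ prefix $h'$, then using equivariance and monotonicity
$$h(\Xc) = \tau_{m}^{\pm 1}(h'(\Xc)) \subseteq \tau_{m}^{\pm 1}(\Xc^{N}) = (\tau_{m}^{\pm 1}(\Xc))^{N} \subseteq (\Xc^{k_{0}})^{N} = \Xc^{N + k_{0}}.$$
Hence $h(\Xc) \subseteq \bigcup_{i} \Xc^{i}$ for every $h \in \Mod{S}$; in other words the entire $\Mod{S}$-orbit of every curve of the principal set lies in the union of iterated expansions.

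Since the curves of $\Cf$ are nonseparating, and so are the curves of the bounding pairs comprising $\Bf$, and since $\Mod{S}$ acts transitively on nonseparating curves, the previous step already places \emph{all} nonseparating curves inside $\bigcup_{i} \Xc^{i}$. It then remains to capture the separating curves. There are only finitely many topological types of separating curve on a closed genus-$g$ surface, each determined by the genera of the two complementary pieces, and $\Mod{S}$ acts transitively on each type; so it suffices to show that one representative $\beta$ of each type lies in some $\Xc^{i}$. For this I would, for each type, exhibit a finite configuration $B$ of nonseparating curves filling the two sides of $\beta$ so tightly that $\beta$ is the unique essential curve disjoint from every element of $B$, i.e. $\beta = \langle B \rangle$. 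As each element of $B$ is nonseparating it already lies in $\bigcup_{i}\Xc^{i}$, hence in a common $\Xc^{M}$ (the union being increasing and $B$ finite), so that $\beta \in \Xc^{M+1}$. Combining, $\bigcup_{i}\Xc^{i}$ contains every nonseparating and every separating curve, yielding the equality $\bigcup_{i \in \nat}\Xc^{i} = \ccomp{S}$.

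The main obstacle is squarely the technical lemma: verifying that the images of $\Cf \cup \Bf$ under each generator twist (and its inverse) can be reconstructed by rigid expansion from the principal set. This is where the explicit geometry of the chain in Figure \ref{OriginalChainv2} enters, and it is naturally broken into several claims — one must, for each twisted curve $\tau^{\pm 1}(\gamma)$, pin down a subset $B$ of an already-constructed expansion for which $\tau^{\pm 1}(\gamma)$ is the \emph{unique} curve disjoint from all of $B$. I expect this uniqueness to be precisely the reason the bounding-pair curves of $\Bf$ are adjoined to $\Cf$: they supply the extra anchoring needed to rule out competing curves and force the intersection of links to be a single vertex.
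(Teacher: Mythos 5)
Your scaffolding is correct and matches the paper's proof structure essentially line for line: the paper also fixes the Humphries--Lickorish set $\Gf = \{\alpha_{0},\ldots,\alpha_{2g-1},\zeta\}$, isolates a technical lemma stating $\tau_{\Gf}^{\pm 1}(\Xc) \subset \Xc^{4}$ (Lemma \ref{CUBen4}; your constant $k_{0}$ equals $4$), propagates it along words in the generators exactly as in your displayed induction (your equivariance fact is the paper's remark that $\beta = \langle B \rangle$ implies $h(\beta) = \langle h(B)\rangle$, used in Lemma \ref{translema}), concludes that every nonseparating curve lies in $\bigcup_{i}\Xc^{i}$ by transitivity of $\Mod{S}$, and then captures each separating curve $\gamma$ as $\gamma = \langle C_{1} \cup C_{2}\rangle$ for two chains of nonseparating curves filling the two complementary subsurfaces --- precisely your ``finite configuration $B$''. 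Those reduction steps of yours (monotonicity, equivariance, word-length induction) are all valid.

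The genuine gap is the technical lemma itself, which you explicitly defer as ``the main obstacle'': it is not a routine verification but the mathematical heart of the theorem, and its proof occupies nearly all of Section \ref{chap1} of the paper. One must, for each generator $\tau_{\alpha}^{\pm 1}$ and each $\gamma \in \Xc$ that it actually moves, produce an explicit finite subset of an earlier expansion that \emph{uniquely} determines $\tau_{\alpha}^{\pm 1}(\gamma)$, and certify that uniqueness. The paper does this in three claims: $\tau_{\Cf}^{\pm 1}(\Cf) \subset \Xc^{2}$, proved by exhibiting concrete sets $C_{+}$, $C_{+}^{\prime}$ of curves in $\Xc \cup \Xc^{1}$ (Lemma \ref{DehnlemmaAra}) and transporting them by the subgroup $\Hcp$ of the setwise stabilizer of $\Cf$, which acts transitively on $\Cf$ (Lemma \ref{translema}, Lemma \ref{Dehnlemma}); then $\tau_{\Cf}^{\pm 1}(\Bf) \cup \tau_{\Bf}^{\pm 1}(\Cf) \subset \Xc^{3}$, which requires constructing new maximal closed chains $\Cf_{l}$ satisfying the hypotheses of Lemma \ref{Dehnlemma}, the identity $\tau_{\alpha}(\beta) = \tau_{\beta}^{-1}(\alpha)$ when $i(\alpha,\beta)=1$ (Equation \ref{taupm}), and the hyperelliptic involution; and finally $\tau_{\zeta}^{\pm 1}(\Bf) \subset \Xc^{4}$ by a case analysis of which $\gamma \in \Bf$ meet $\zeta$. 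Your proposal gives no mechanism for any of these constructions, nor for verifying that the candidate sets pin down a single vertex of $\ccomp{S}$ --- which, as you correctly guess, is exactly where the curves of $\Bf$ are indispensable. As it stands, your argument is a correct reduction of Theorem \ref{Thm2} to Lemma \ref{CUBen4}, not a proof of the theorem.
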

\indent The idea of the proof is as follows. Let $\zeta$ be the curve depicted in Figure \ref{OriginalChainv2}; we define the set $\Gf = \{\alpha_{0}, \ldots, \alpha_{2g-1}, \zeta\}$. Note that Humphries and Lickorish proved that the Dehn twists along the elements of $\Gf$ generate $\Mod{S}$ (see \cite{Hump}). Also recall that an essential curve $\alpha$ on $S$ is separating if $S \backslash \{\alpha\}$ is disconnected, and it is called nonseparating otherwise.\\
\indent First we prove that the image of $\Cf \cup \Bf$ under the Dehn twist along any element of $\Gf$ is contained in $(\Cf \cup \Bf)^{4}$. Afterwards we note that any nonseparating curve in $\ccomp{S}$ is the image of an element in $\Gf$ under an orientation preserving mapping class, and thus is contained in $(\Cf \cup \Bf)^{k}$ for some $k$. Finally, we show that every separating curve in $\ccomp{S}$ is uniquely determined by some finite subset of nonseparating curves, and thus also lies in $(\Cf \cup \Bf)^{k}$ for some $k$.\\
\indent Before passing to the proof of Theorem \ref{Thm2}, we give the necessary notation and state a technical lemma.\\
\indent Let $\alpha, \beta \in \ccomp{S}$ and $A, B \subset \ccomp{S}$. We denote by $\tau_{\alpha}(\beta)$ the right Dehn twist of $\beta$ along $\alpha$, $\tau_{\alpha}(B) = \bigcup_{\gamma \in B} \{\tau_{\alpha}(\gamma)\}$ and $\tau_{A}(B) = \bigcup_{\gamma \in A} \tau_{\gamma}(B)$. Observe that if $\alpha$ and $\beta$ are such that $i(\alpha,\beta) = 1$, we have:
\begin{equation}\label{taupm} 
 \tau_{\alpha}(\beta) = \tau_{\beta}^{-1}(\alpha) \hspace{1cm} \tau_{\alpha}^{-1}(\beta) = \tau_{\beta}(\alpha);
\end{equation}
\indent See Proposition 3.9 in \cite{Ara2} or Figure \ref{DehnTwistEqFig} for a proof.\\
\begin{figure}
 \begin{center}
  \resizebox{9cm}{!}{\input{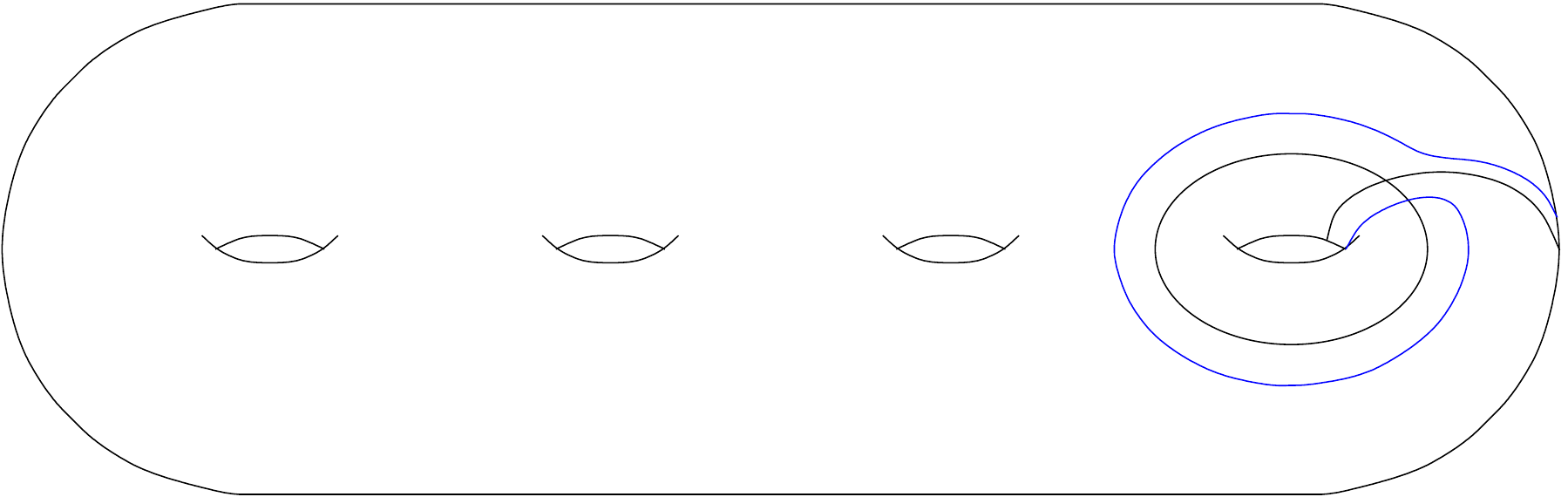_t}}\caption{The curves $\alpha$ and $\beta$ in black, with the curve $\tau_{\alpha}(\beta)$ in blue.}\label{DehnTwistEqFig}
 \end{center}
\end{figure}
\indent The key technical lemma for the proof of Theorem \ref{Thm2} is the following.
\begin{Lema}\label{CUBen4}
 $\tau_{\Gf}^{\pm 1}(\Cf \cup \Bf) \subset (\Cf \cup \Bf)^{4}$.
\end{Lema}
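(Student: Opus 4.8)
The plan is to verify the containment one twist image at a time, exploiting the fact that a rigid expansion only ever adjoins curves that are uniquely determined by a subset of the set already constructed. The first observation is a reduction: if $\gamma \in \Gf$ and $\delta \in \Xc$ satisfy $i(\gamma,\delta) = 0$, then $\tau_{\gamma}^{\pm 1}(\delta) = \delta \in \Xc \subset (\Xc)^{4}$, so only the pairs $(\gamma,\delta)$ with $i(\gamma,\delta) \geq 1$ need to be examined. The chain relations $i(\alpha_{i},\alpha_{i+1}) = 1$ (with the $\alpha_{i}$ disjoint otherwise), together with the explicit position of $\zeta$, pin down precisely which curves of $\Xc$ meet each generator; this leaves a finite, explicit list of twist images to treat.

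On this finite list I would first apply the identity (\ref{taupm}): whenever $i(\alpha,\beta) = 1$ we have $\tau_{\alpha}(\beta) = \tau_{\beta}^{-1}(\alpha)$ and $\tau_{\alpha}^{-1}(\beta) = \tau_{\beta}(\alpha)$. For adjacent chain curves this identifies a positive twist of one with a negative twist of the other, so the two signs need not be handled independently, and the rotational and reflective symmetries of the maximal chain $\Cf$ (which permute the generators and preserve $\Xc$, hence preserve each expansion $(\Xc)^{i}$) further collapse the casework to a few representatives, the images at the other positions following by the equivariance $h(\langle B\rangle) = \langle h(B)\rangle$. The core of the argument is then to produce, for each remaining twist image $\beta = \tau_{\gamma}^{\pm 1}(\delta)$, a finite set $B$ lying in an earlier expansion $(\Xc)^{j}$ with $j \leq 3$ for which $\beta = \langle B \rangle$. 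I would establish each such equality topologically: one exhibits curves already known to lie in $(\Xc)^{j}$ whose union fills $S$ away from an annular neighbourhood of $\beta$, so that any essential curve disjoint from all of $B$ is trapped in that annulus and must be isotopic to $\beta$.

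The reason the exponent $4$ appears, rather than $1$, is that not every twist image is determined directly by curves of $\Xc$: some of the filling families must themselves contain twist images produced at an earlier stage. I therefore expect to organise the verification in layers — a first batch of images determined by subsets of $\Xc$ (so lying in $(\Xc)^{1}$), these new curves then serving to determine a second batch (landing in $(\Xc)^{2}$), and so on — with the most recalcitrant images resolved only at the fourth step, which is what forces the stated bound.

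The main obstacle will be the twist images involving the bounding-pair curves $\Bf$, and those along the generator $\zeta$, which is itself one of a bounding pair. Because these curves are separating (or bound a fixed subsurface), their twist images interact with the chain less uniformly than the chain-on-chain twists, and locating filling families inside the low-level expansions is the delicate point; this is also where I expect the hypothesis $g \geq 3$ to be genuinely used, since one needs enough genus to build families that isolate a single annulus. Verifying $\beta = \langle B \rangle$ in these cases — in particular checking that \emph{no} second essential curve is simultaneously disjoint from every element of $B$ — is the step I anticipate to be the hardest and the one dictating the depth-$4$ bound.
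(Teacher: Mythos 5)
Your outline tracks the paper's own strategy closely: the reduction to intersecting pairs, the use of Equation (\ref{taupm}) to trade a twist of $\alpha$ along $\beta$ for a twist of $\beta$ along $\alpha$, the use of symmetries of $\Cf$ preserving $\Xc$ (and hence, which is the paper's Lemma \ref{translema}, preserving every expansion $(\Xc)^{k}$) to collapse the casework to model pairs, and the layered bookkeeping in which curves produced at depth $j$ serve to determine curves at depth $j+1$. The paper organizes exactly this as three claims: $\tau_{\Cf}^{\pm 1}(\Cf) \subset (\Xc)^{2}$, then $\tau_{\Cf}^{\pm 1}(\Bf) \cup \tau_{\Bf}^{\pm 1}(\Cf) \subset (\Xc)^{3}$, then $\tau_{\zeta}^{\pm 1}(\Bf) \subset (\Xc)^{4}$, with a single model computation (Lemma \ref{DehnlemmaAra}, later generalized as Lemma \ref{Dehnlemma}) transported around by the transitive action of $\Hcp$ and by the hyperelliptic involution.

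However, what you have written is a plan rather than a proof: the entire content of the lemma lies in exhibiting the determining sets, and you never produce one. The paper's proof consists precisely of those explicit constructions (the sets $C_{+}$, $C_{+}^{\prime}$, the auxiliary chains $\Cf_{l}$ with their tabulated bounding-pair curves, etc.), and nothing in your proposal certifies that such filling families exist inside $(\Xc)^{3}$ — that is exactly the step you defer. Two further points. First, your diagnosis of why the $\Bf$ and $\zeta$ cases are delicate is incorrect: the curves of $\Bf$, including $\zeta$, are \emph{nonseparating} (each is one member of a bounding pair), so the difficulty is not that they are separating. Second, the paper's treatment of the hardest case $\tau_{\zeta}^{\pm 1}(\Bf)$ is not a from-scratch filling construction for the image curve; instead, for $\gamma \in \Bf$ meeting $\zeta$ it writes $\gamma = \langle C_{0} \cup \{\beta_{0}\}\rangle$ with $C_{0} \subset \Cf$ and $\beta_{0} \in \Bf$ \emph{disjoint from} $\zeta$, and then applies the twist to the determining set, so that $\tau_{\zeta}^{\pm 1}(\gamma) = \langle \tau_{\zeta}^{\pm 1}(C_{0}) \cup \{\beta_{0}\}\rangle$ lies in $(\Xc)^{4}$ by the previous claim. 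This equivariance trick — where the homeomorphism pushed through $\langle \cdot \rangle$ is the twist itself, not a chain symmetry — is what makes the final case mechanical; your framework is compatible with it, but you did not identify it, and without it (or the explicit constructions it replaces) the depth-$4$ bound remains unproved.
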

\indent Note that, as was mentioned in the Introduction, if $\beta = \langle B \rangle$ we have that for any $h \in \EMod{S}$, $h(\beta) = \langle h(B) \rangle$. This allows the iterated use of the lemma.\\
\indent Assuming this lemma (which we prove in the following subsections) we embark on the proof of Theorem \ref{Thm2}.
\begin{proof}[\textbf{Proof of Theorem \ref{Thm2}}]
\indent Let $\gamma$ be a nonseparating curve and $\alpha \in \Gf$. There exists an orientation preserving mapping class $h \in \Mod{S}$ such that $\gamma = h(\alpha)$. As was mentioned above, the Dehn twists along the elements of $\Gf$ generate $\Mod{S}$. Thus, for some $\gamma_{1}, \ldots, \gamma_{m} \in \Gf$ and some $n_{1}, \ldots, n_{m} \in \mathbb{Z}$ we have that $\gamma = \tau_{\gamma_{1}}^{n_{1}} \circ \cdots \circ \tau_{\gamma_{m}}^{n_{m}}(\alpha)$. By an inductive use of Lemma \ref{CUBen4}, we have that $\gamma \in (\Cf \cup \Bf)^{4(|n_{1}| + \ldots + |n_{m}|)}$. Hence, every nonseparating curve is an element of $\bigcup_{i \in \nat} (\Cf \cup \Bf)^{i}$.\\
\indent Let $\gamma$ be a separating curve. Note that up to homeomorphism there exist only a finite number of separating curves. Moreover, as can be seen in Figure \ref{ChainsProofFig}, every such curve can be uniquely determined by a pair of chains of cardinalities $2g^{\prime}$ and $2g^{\biprime}$, where $g^{\prime}$ and $g^{\biprime}$ are the genera of the connected components of $S \backslash \{\gamma\}$. Then, there exist chains $C_{1}$ and $C_{2}$ such that $\gamma = \langle C_{1} \cup C_{2}\rangle$. By the previous case, $C_{1} \cup C_{2} \subset (\Cf \cup \Bf)^{k}$ for some $k \in \nat$; thus $\gamma \in (\Cf \cup \Bf)^{k+1}$. Therefore $\ccomp{S} = \bigcup_{i \in \nat} (\Cf \cup \Bf)^{i}$.
\end{proof}
\begin{figure}
 \begin{center}
  \resizebox{9cm}{!}{\input{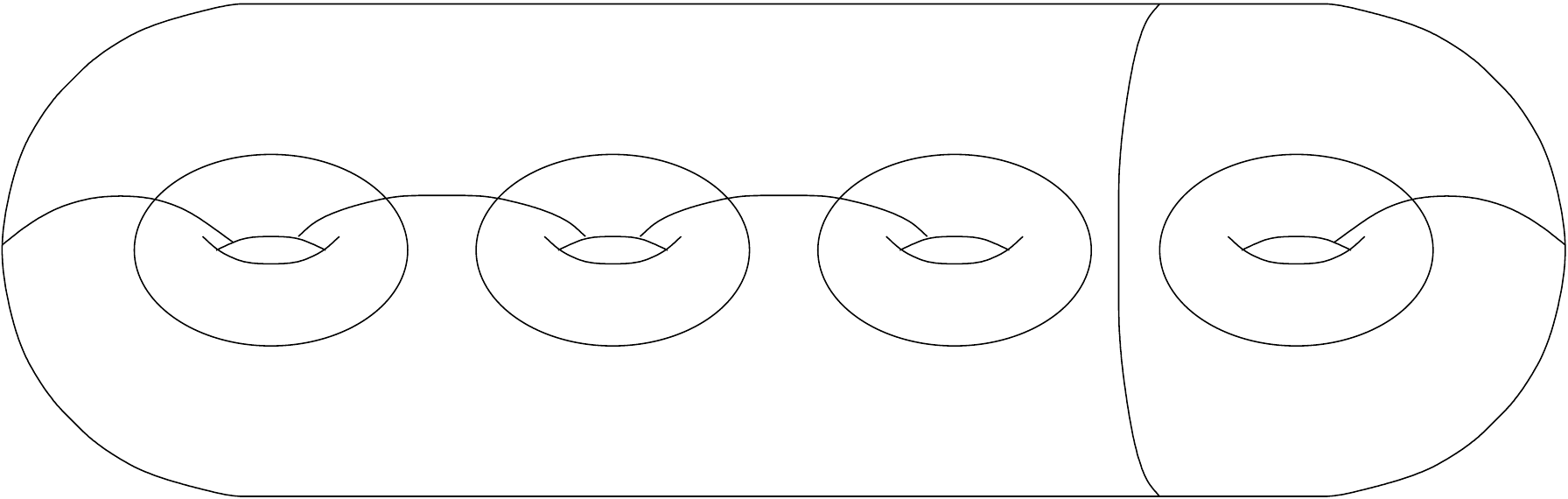_t}}
 \end{center}\caption{A separating curve $\gamma$ and chains $C_{1}$ and $C_{2}$ that uniquely determine it.}\label{ChainsProofFig}
\end{figure}
\indent As stated before, the rest of this section is dedicated to the proof of Lemma \ref{CUBen4}, which (using that $\zeta \in \Bf$) is divided as follows:\\
\textbf{Claim 1:} $\tau_{\Cf}^{\pm 1} (\Cf) \subset (\Cf \cup \Bf)^{2}$.\\
\textbf{Claim 2:} $\tau_{\Cf}^{\pm 1} (\Bf) \cup \tau_{\Bf}^{\pm 1}(\Cf) \subset (\Cf \cup \Bf)^{3}$.\\
\textbf{Claim 3:} $\tau_{\zeta}^{\pm 1}(\Bf) \subset (\Cf \cup \Bf)^{4}$.\\[0.3cm]
\indent Note that since we only need to prove the lemma for Dehn twists along elements of $\Gf$, we only need to prove Claim 3 for $\zeta$.\\
\indent Before going further, we introduce the notation  used in the proofs of said claims.\\
\indent Let $\Cf^{\prime} = \{\gamma_{0}, \ldots, \gamma_{2g+1}\}$ be a maximal closed chain in $S$. The sets $\Cf^{\prime}_{o} = \{\gamma_{i} \in \Cf^{\prime} : i$ is odd$\}$ and $\Cf^{\prime}_{e} = \{\gamma_{i} \in \Cf^{\prime} : i$ is even$\}$, satisfy that $S \backslash \Cf^{\prime}_{e}$ and $S \backslash \Cf^{\prime}_{o}$ have two connected components, each homeomorphic to $S_{0,g+1}$. We denote by $S_{e}^{+}$ and $S_{e}^{-}$ the connected components of $S \backslash \Cf^{\prime}_{e}$, and by $S_{o}^{+}$ and $S_{o}^{-}$ the connected components of $S \backslash \Cf^{\prime}_{o}$. See Figure \ref{Beven} for an example. Let $1 \leq k \leq g-1$, and $\{\gamma_{i}, \ldots, \gamma_{i+2k}\}$ (with the indices modulo $2g+2$) be a subchain of $\Cf^{\prime}$. We denote by $[\gamma_{i}, \ldots, \gamma_{i+2k}]^{+}$ the curve in the associated bounding pair that is contained in either $S_{o}^{+}$ or $S_{e}^{+}$. Analogously, we denote by $[\gamma_{i}, \ldots, \gamma_{i+2k}]^{-}$ the curve in the associated bounding pair contained in either $S_{o}^{-}$ or $S_{e}^{-}$.\\
\begin{figure}[h]
\begin{center}
 \includegraphics[width=9cm]{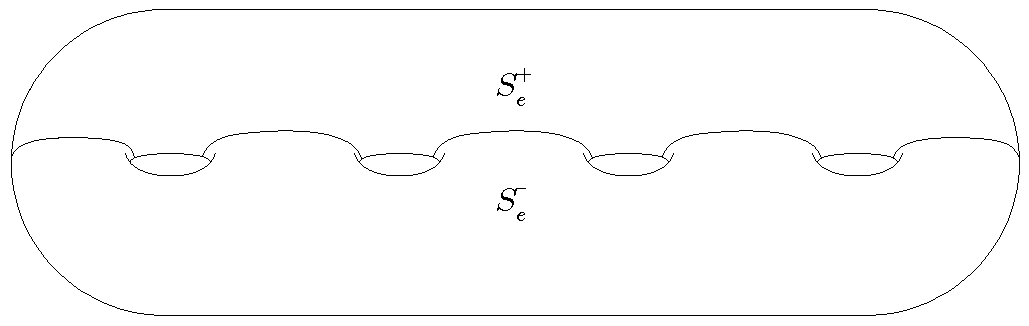} \caption{The set $\Cf^{\prime}_{e}$ and the corresponding $S_{e}^{+}$ and $S_{e}^{-}$.}\label{Beven}
\end{center}
\end{figure}
\begin{Rem}
 Note that according to this notation, $\zeta = [\alpha_{0}, \alpha_{1}, \alpha_{2}]^{-}$.
\end{Rem}
\indent We partition the set $\Bf$ into $\Bf_{o}^{+}$, $\Bf_{o}^{-}$, $\Bf_{e}^{+}$ and $\Bf_{e}^{-}$, depending on whether $\beta \in \Bf$ is contained in $S_{o}^{+}$, $S_{o}^{-}$, $S_{e}^{+}$ or $S_{e}^{-}$ respectively. We write $\Bf^{+} = \Bf_{o}^{+} \cup \Bf_{e}^{+}$ and $\Bf^{-} = \Bf_{o}^{-} \cup \Bf_{e}^{-}$.
\subsection{Proof of Claim 1: $\tau_{\Cf}^{\pm 1}(\Cf) \subset (\Cf \cup \Bf)^{2}$}\label{subsec4-2}
\indent To prove the claim, we start with a pair of particular curves and we show that is enough to prove the claim via the action of a particular subgroup of $\Mod{S}$.\\
\indent The following lemma is heavily based on Lemma 5.3 in \cite{Ara2}. However, its proof has been modified to emphasize the arguments that are used to obtain a more general result which is repeatedly used in the following subsections.
\begin{Lema}\label{DehnlemmaAra}
 $\tau_{\alpha_{2g}}^{\pm 1}(\alpha_{2g-1}) \in (\Cf \cup \Bf)^{2}$.
\end{Lema}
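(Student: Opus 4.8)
The goal is to show that both $\tau_{\alpha_{2g}}(\alpha_{2g-1})$ and $\tau_{\alpha_{2g}}^{-1}(\alpha_{2g-1})$ lie in $(\Cf \cup \Bf)^{2}$, i.e. each is either already in $\Cf \cup \Bf$, or is uniquely determined by a subset of $\Cf \cup \Bf$, or is uniquely determined by a subset of the first rigid expansion $(\Cf \cup \Bf)^{1}$. Since $i(\alpha_{2g-1}, \alpha_{2g}) = 1$, by the identity \eqref{taupm} we have $\tau_{\alpha_{2g}}(\alpha_{2g-1}) = \tau_{\alpha_{2g-1}}^{-1}(\alpha_{2g})$ and $\tau_{\alpha_{2g}}^{-1}(\alpha_{2g-1}) = \tau_{\alpha_{2g-1}}(\alpha_{2g})$, so the two target curves are symmetric to one another and I would treat one explicitly and indicate that the other follows by the analogous (mirror) argument.

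\medskip

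\noindent\textbf{Approach.} The plan is to identify the target curve $\delta \ColonEqq \tau_{\alpha_{2g}}(\alpha_{2g-1})$ concretely as a nonseparating curve supported in a neighbourhood of $\alpha_{2g-1} \cup \alpha_{2g}$, and then to exhibit an explicit finite collection $B$ of curves, each lying in $\Cf \cup \Bf$ or in its first expansion $(\Cf \cup \Bf)^{1}$, such that $\delta = \langle B \rangle$. First I would draw a regular neighbourhood of the subchain $\{\alpha_{2g-1}, \alpha_{2g}\}$ (a one-holed torus, or rather the relevant piece of the maximal chain) and read off from the picture exactly which chain curves $\alpha_i$ are disjoint from $\delta$, together with which bounding-pair curves of $\Bf$ are disjoint from $\delta$. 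The chain $\Cf$ is maximal and symmetric, so the curves $\alpha_i$ with $i \notin \{2g-1, 2g\}$ together with a suitably chosen subset of $\Bf$ should cut the surface into pieces forcing $\delta$ to be the unique remaining essential curve disjoint from all of them. Where the chain curves alone do not pin $\delta$ down uniquely, I would invoke bounding-pair curves $[\,\cdots]^{\pm}$ from the neighbouring subchains to eliminate the ambiguity; the point of allowing a second expansion is precisely that a few of the curves needed for the determining set $B$ may themselves only be obtainable as curves in $(\Cf \cup \Bf)^{1}$.

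\medskip

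\noindent\textbf{Key steps in order.} (i) Use \eqref{taupm} to reduce to a single target curve and fix notation for $\delta$. (ii) Describe $\delta$ topologically relative to $\Cf$, recording its intersection pattern with every $\alpha_i$ and with the relevant bounding pairs. (iii) Produce a candidate determining set $B \subset (\Cf \cup \Bf)^{1}$ and verify that every curve of $B$ is disjoint from $\delta$. (iv) Prove uniqueness: show that the common link $\bigcap_{\gamma \in B} \lk{\gamma}$ contains no essential curve other than $\delta$. This is the step where I would argue that the curves in $B$ fill the complement of $\delta$, or equivalently that cutting $S$ along $B$ leaves a collection of pieces none of which admits an essential curve disjoint from all of $B$ except $\delta$ itself. (v) Conclude $\delta \in (\Cf \cup \Bf)^{2}$, and note the mirror argument for $\tau_{\alpha_{2g}}^{-1}(\alpha_{2g-1})$.

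\medskip

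\noindent\textbf{Main obstacle.} The delicate part is step (iv), the uniqueness argument: disjointness from a finite set only pins down $\delta$ if that set is rich enough to fill the complementary subsurface, and with chain curves alone one typically finds a whole annular family (or a pair) of candidate curves sharing the same link, which is exactly why the bounding-pair curves of $\Bf$ — and possibly curves of $(\Cf \cup \Bf)^{1}$ — must be brought in to break the remaining symmetry. I expect the bulk of the work to be a careful case check, guided by the figure, confirming that the chosen $B$ genuinely fills and that no stray essential curve survives in the complement; the appeal to the already-established first expansion $(\Cf \cup \Bf)^{1}$ is what keeps $B$ inside the allowed level and hence places $\delta$ in the second expansion.
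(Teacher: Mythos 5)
Your overall strategy coincides with the paper's: exhibit a finite set $B \subset (\Cf \cup \Bf)^{1}$ of curves disjoint from $\delta \ColonEqq \tau_{\alpha_{2g}}(\alpha_{2g-1})$, show $\delta = \langle B \rangle$, allow some members of $B$ to live only in the first expansion (which is what forces the exponent $2$), and dispatch $\tau_{\alpha_{2g}}^{-1}(\alpha_{2g-1})$ by a mirror argument. But there is a genuine gap: your steps (iii) and (iv) — ``produce a candidate determining set'' and ``prove uniqueness'' — are the entire mathematical content of the lemma, and you never carry them out. Nothing in the proposal names a single curve of $B$, identifies which auxiliary level-$1$ curve is needed or how it would itself be obtained as $\langle C \rangle$ for some $C \subset \Cf \cup \Bf$, or verifies any disjointness or filling claim. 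The (correct) remark that $B$ must fill the complement of $\delta$ is generic; it applies to every unique-determination statement in the paper and does not by itself produce the set. As written, this is a plan for a proof rather than a proof.

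For comparison, the paper's proof consists precisely of the exhibits you defer. It first constructs one auxiliary curve $\gamma_{+} \ColonEqq \langle C_{+} \rangle \in (\Cf \cup \Bf)^{1}$, where $C_{+} = \{ \alpha_{2g+1}, \alpha_{1}, \alpha_{2}, \ldots, \alpha_{2g-4}, \alpha_{2g-2}, [\alpha_{2g-3},\alpha_{2g-2},\alpha_{2g-1}]^{+}, [\alpha_{2},\ldots,\alpha_{2g-2}]^{+}\} \subset \Cf \cup \Bf$, and then shows $\tau_{\alpha_{2g}}(\alpha_{2g-1}) = \langle C_{+}^{\prime} \rangle$ with $C_{+}^{\prime} = \{\alpha_{0}, \ldots, \alpha_{2g-3}, [\alpha_{2g-2},\alpha_{2g-1},\alpha_{2g}]^{+}, [\alpha_{2g-2},\alpha_{2g-1},\alpha_{2g}]^{-}, \gamma_{+}\} \subset (\Cf \cup \Bf)^{1}$; both unique-determination claims are checked on the explicit configuration of Figure \ref{Dehntwists}, and the inverse twist is handled by substituting the appropriate $[\,\cdots]^{-}$ curves for $[\,\cdots]^{+}$ ones. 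Note the structure this reveals, which your outline does not anticipate exactly: a single auxiliary curve $\gamma_{+}$ suffices, it is not a bounding pair of $\Bf$ but a genuinely new curve at level $1$, and it enters the determining set of $\delta$ alongside the bounding pair $[\alpha_{2g-2},\alpha_{2g-1},\alpha_{2g}]^{\pm}$ associated to the subchain where the twisting happens. Until you specify such sets and verify them (the verification being the ``careful case check'' you correctly predict), the statement $\tau_{\alpha_{2g}}^{\pm 1}(\alpha_{2g-1}) \in (\Cf \cup \Bf)^{2}$ is not established.
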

\begin{proof}
\indent Using the set $$C_{+} = \{ \alpha_{2g+1}, \alpha_{1}, \alpha_{2}, \ldots, \alpha_{2g-4}, \alpha_{2g-2}, [\alpha_{2g-3},\alpha_{2g-2},\alpha_{2g-1}]^{+}, [\alpha_{2},\ldots,\alpha_{2g-2}]^{+}\},$$ we obtain the curve $\gamma_{+} \in (\Cf \cup \Bf)^{1}$ as the curve uniquely determined by $C_{+}$, see Figure \ref{Dehntwists}. Then, letting $$C_{+}^{\prime} = \{\alpha_{0}, \ldots, \alpha_{2g-3}, [\alpha_{2g-2},\alpha_{2g-1},\alpha_{2g}]^{+}, [\alpha_{2g-2},\alpha_{2g-1},\alpha_{2g}]^{-}, \gamma_{+}\},$$ we have that $\tau_{\alpha_{2g}}(\alpha_{2g-1}) = \langle C_{+}^{\prime}\rangle \in (\Cf \cup \Bf)^{2}$.\\
\indent Analogously we have that $\tau_{\alpha_{2g}}^{-1}(\alpha_{2g-1}) = \langle C_{-}^{\prime}\rangle \in (\Cf \cup \Bf)^{2}$. See \cite{Thesis} for more details. Therefore $\tau_{\alpha_{2g}}^{\pm 1}(\alpha_{2g-1}) \in (\Cf \cup \Bf)^{2}$.
\end{proof}
\begin{figure}[h]
\begin{center}
 \includegraphics[width=7cm]{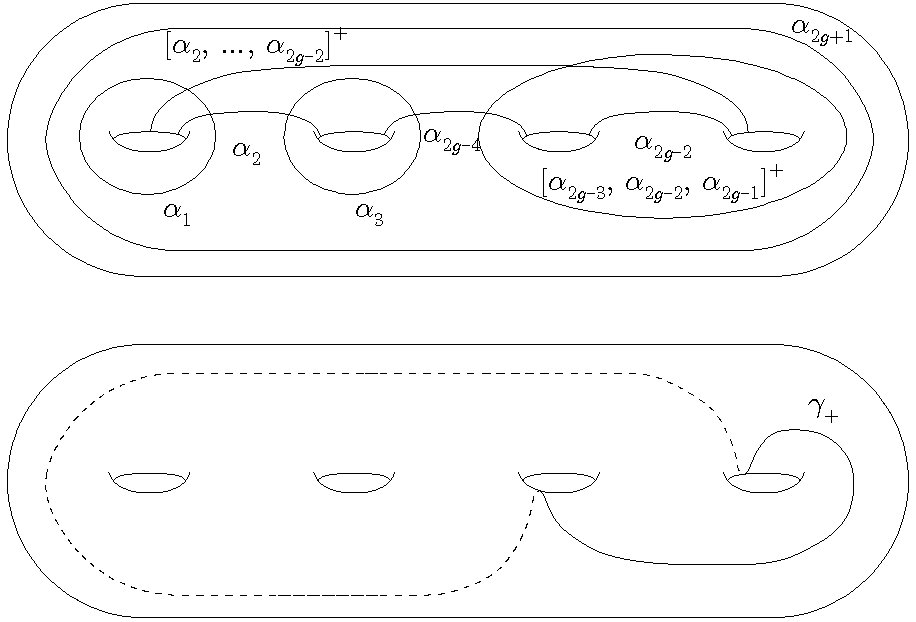} \includegraphics[width=7cm]{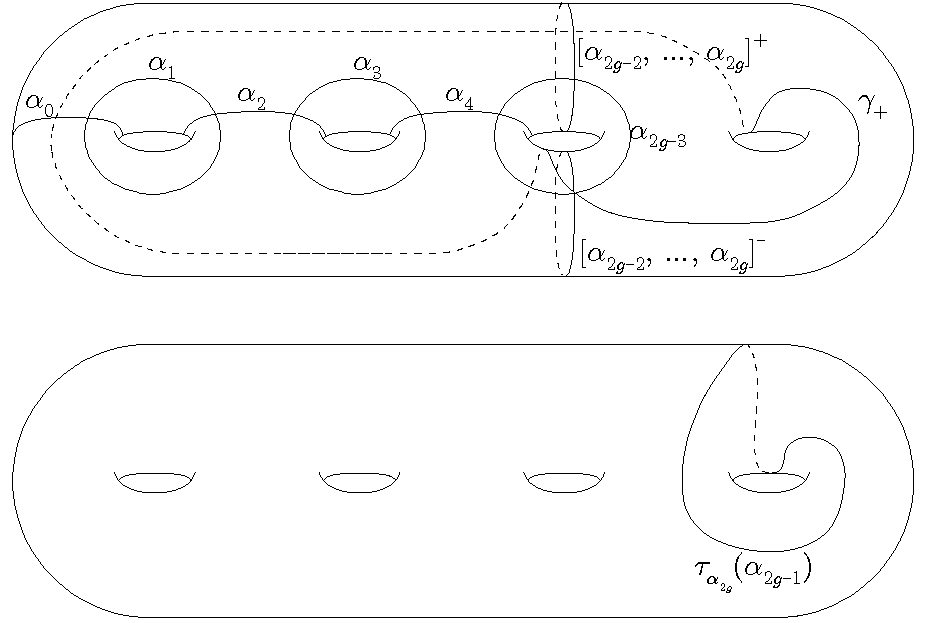}\caption{Above, the sets $C_{+}$ (left) and $C_{+}^{\prime}$ (right). Below, the curves $\gamma_{+}$ (left) and $\tau_{\alpha_{2g}}(\alpha_{2g-1})$ (right), uniquely determined by the sets $C_{+}$ and $C_{+}^{\prime}$ respectively.}\label{Dehntwists}
\end{center}
\end{figure}
\indent Let $H_{\Cf} < \EMod{S}$ be the setwise stabilizer of $\Cf$.
\begin{Rem}\label{reminvolution}
 Observe that $\Hcf(\Bf) = \Bf$, and for all $h \in \Hcf$ we have that $h(\Bf^{+}), h(\Bf^{-}) \in \{\Bf^{+},\Bf^{-}\}$. Moreover, $\Hcf$ can be partitioned as $\Hcf = \Hcp \sqcup \iota\Hcp$, where $\Hcp$ is the subgroup such that $\Hcp(\Bf^{+}) = \Bf^{+}$, and $\iota$ the hyperelliptic involution (which exchanges $S_{o}^{+}$ (resp. $S_{e}^{+}$) and $S_{o}^{-}$(resp. $S_{e}^{-}$)). Also note that $\Hcp$ \emph{acts transitively} on $\Cf$.
\end{Rem}
\begin{Lema}\label{translema}
 Let $h \in H_{\Cf}$, $k \in \nat$ and $\gamma \in (\Cf \cup \Bf)^{k}$. Then $h(\gamma) \in (\Cf \cup \Bf)^{k}$.
\end{Lema}
\begin{proof}
 We proceed by induction; if $k = 0$, we obtain the result by construction. If $k \geq 1$ let $\gamma \in (\Cf \cup \Bf)^{k} \backslash (\Cf \cup \Bf)^{k-1}$, as such $\gamma = \langle C_{0} \rangle$ with $C_{0} \subset (\Cf \cup \Bf)^{k-1}$; then $h(\gamma) = \langle h(C_{0})\rangle$, but by induction $h(C_{0}) \subset (\Cf \cup \Bf)^{k-1}$, thus $h(\gamma) \in (\Cf \cup \Bf)^{k}$.
\end{proof}
\indent Armed with Lemma \ref{translema}, we are ready to prove Claim 1.\\
\begin{proof}[Proof of Claim 1] Let $\alpha_{i}, \alpha_{j} \in \Cf$ with $i \neq j$. We want to prove that $\tau_{\alpha_{i}}^{\pm 1}(\alpha_{j}) \in (\Cf \cup \Bf)^{2}$. If $|i -j| > 1$ (modulo $2g+2$), then the curves are disjoint and we have that $\tau_{\alpha_{i}}^{\pm 1}(\alpha_{j}) = \alpha_{j} \in \Cf$. Suppose then that $|i-j| = 1$. There exists an element $h \in \Hcp$ such that either $h(\alpha_{2g}) = \alpha_{i}$ and $h(\alpha_{2g-1}) = \alpha_{j}$ if $i = j+1$, or $h(\alpha_{2g}) = \alpha_{j}$ and $h(\alpha_{2g-1}) = \alpha_{i}$ if $j = i+1$. Repeating the procedure of the proof of Lemma \ref{DehnlemmaAra}, precomposing by $h$ and using Lemma \ref{translema}, we obtain that $\tau_{\alpha_{i}}^{\pm 1}(\alpha_{j}) \in (\Cf \cup \Bf)^{2}$.
\end{proof}
\indent This finishes the proof of Claim 1. However, the proofs of Lemma \ref{DehnlemmaAra} and Claim 1 give us a slightly more general result, which is often used in the rest of this section. Its objective is to reduce the problems posed in the following claims when finding convenient maximal closed chains and showing that particular curves dependent on said chains are uniquely determined by elements in the expansions of $\Cf \cup \Bf$.
\begin{Lema}\label{Dehnlemma}
 Let $\{\gamma_{0}, \ldots, \gamma_{2g+1}\}$ be a maximal closed chain in $S$ that is contained in $(\Cf \cup \Bf)^{k}$ for some $k \in \nat$. If $[\gamma_{2g-3}, \gamma_{2g-2}, \gamma_{2g-1}]^{+}, [\gamma_{2}, \ldots, \gamma_{2g-2}]^{\pm} \in (\Cf \cup \Bf)^{k+1}$ and $[\gamma_{2g-2},\gamma_{2g-1},\gamma_{2g}]^{\pm} \in (\Cf \cup \Bf)^{k+2}$, then $\tau_{\gamma_{2g}}^{\pm 1}(\gamma_{2g-1}) \in (\Cf \cup \Bf)^{k+3}$. Moreover, $\tau_{f(\gamma_{2g})}^{\pm 1}(f(\gamma_{2g-1})) \in (\Cf \cup \Bf)^{k+3}$ for any $f \in \Hcp$.
\end{Lema}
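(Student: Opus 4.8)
The plan is to generalize the explicit computation in Lemma~\ref{DehnlemmaAra} from the standard chain $\Cf$ to an arbitrary maximal closed chain $\{\gamma_0,\ldots,\gamma_{2g+1}\}$ living in some expansion $(\Cf\cup\Bf)^k$, by re-examining which curves the proof of Lemma~\ref{DehnlemmaAra} actually used. In that lemma the twist $\tau_{\alpha_{2g}}(\alpha_{2g-1})$ was exhibited as $\langle C_+'\rangle$, where $C_+'$ consists of chain curves $\alpha_0,\ldots,\alpha_{2g-3}$, the two bounding curves $[\alpha_{2g-2},\alpha_{2g-1},\alpha_{2g}]^{\pm}$, and the auxiliary curve $\gamma_+$; and $\gamma_+$ was in turn $\langle C_+\rangle$ for a set $C_+$ made of chain curves together with $[\alpha_{2g-3},\alpha_{2g-2},\alpha_{2g-1}]^{+}$ and $[\alpha_2,\ldots,\alpha_{2g-2}]^{+}$. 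So the only ingredients beyond the chain itself are precisely the bounding-pair curves named in the hypotheses of this lemma.

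First I would observe that since mapping classes preserve geometric intersection numbers, disjointness, and the ``uniquely determined by'' relation (as recorded in the Introduction: $\beta=\langle B\rangle$ implies $h(\beta)=\langle h(B)\rangle$), the \emph{entire} construction of Lemma~\ref{DehnlemmaAra} transports verbatim along any homeomorphism carrying $\Cf$ to $\{\gamma_0,\ldots,\gamma_{2g+1}\}$. Concretely, fix a homeomorphism $\phi$ with $\phi(\alpha_i)=\gamma_i$ for all $i$ (such $\phi$ exists because any two maximal closed chains are equivalent under $\Mod{S}$, as noted after Figure~\ref{OriginalChainv2}); then applying $\phi$ to the sets $C_+$ and $C_+'$ turns the identities $\gamma_+=\langle C_+\rangle$ and $\tau_{\alpha_{2g}}(\alpha_{2g-1})=\langle C_+'\rangle$ into $\phi(\gamma_+)=\langle\phi(C_+)\rangle$ and $\tau_{\gamma_{2g}}(\gamma_{2g-1})=\langle\phi(C_+')\rangle$, using that $\phi$ commutes with bounding-pair formation and sends $\tau_{\alpha_{2g}}(\alpha_{2g-1})$ to $\tau_{\gamma_{2g}}(\gamma_{2g-1})$.

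The heart of the argument is then a level-bookkeeping check: I must verify that every element of $\phi(C_+)$ lies in $(\Cf\cup\Bf)^{k+1}$ and every element of $\phi(C_+')$ lies in $(\Cf\cup\Bf)^{k+2}$, so that the two successive $\langle\cdot\rangle$-steps land $\tau_{\gamma_{2g}}^{\pm1}(\gamma_{2g-1})$ in $(\Cf\cup\Bf)^{k+3}$. The chain curves $\phi(\alpha_i)=\gamma_i$ are in $(\Cf\cup\Bf)^k$ by hypothesis. The curves $[\gamma_{2g-3},\gamma_{2g-2},\gamma_{2g-1}]^{+}$ and $[\gamma_2,\ldots,\gamma_{2g-2}]^{\pm}$ appearing in $\phi(C_+)$ are assumed to lie in $(\Cf\cup\Bf)^{k+1}$, so $\phi(\gamma_+)=\langle\phi(C_+)\rangle\in(\Cf\cup\Bf)^{k+2}$; the curves $[\gamma_{2g-2},\gamma_{2g-1},\gamma_{2g}]^{\pm}$ in $\phi(C_+')$ are assumed to lie in $(\Cf\cup\Bf)^{k+2}$, and together with $\phi(\gamma_+)\in(\Cf\cup\Bf)^{k+2}$ this gives $\tau_{\gamma_{2g}}(\gamma_{2g-1})=\langle\phi(C_+')\rangle\in(\Cf\cup\Bf)^{k+3}$. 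The inverse twist is handled symmetrically via the set $C_-'$, exactly as in Lemma~\ref{DehnlemmaAra}.

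For the ``moreover'' clause, I would invoke Lemma~\ref{translema}: if $f\in\Hcp$, then $f$ preserves each expansion level, so $f(\gamma_i)$, $f([\gamma_{2g-3},\gamma_{2g-2},\gamma_{2g-1}]^{+})$, and so on, stay in the same levels as the hypotheses guarantee for the $\gamma$'s themselves, whence running the identical argument with $f\circ\phi$ in place of $\phi$ yields $\tau_{f(\gamma_{2g})}^{\pm1}(f(\gamma_{2g-1}))\in(\Cf\cup\Bf)^{k+3}$. \textbf{The main obstacle} I anticipate is purely organizational rather than conceptual: I must confirm that the bounding-pair superscripts $\pm$ track correctly under $\phi$ and under $f$ (i.e.\ that $\phi$ and $f$ respect the side-decomposition into $S^{\pm}_o,S^{\pm}_e$ compatibly with the hypotheses, which is where the transitivity and side-preservation properties of $\Hcp$ in Remark~\ref{reminvolution} are needed), and that I have not tacitly used any curve of $C_+$ or $C_+'$ whose level exceeds what the hypotheses grant — a careful reading of the explicit sets in Lemma~\ref{DehnlemmaAra} settles this.
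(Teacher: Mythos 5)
Your proposal is correct and follows essentially the same route as the paper: the paper also picks $h \in \Mod{S}$ with $h(\alpha_{i}) = \gamma_{i}$ (using that all maximal closed chains are $\Mod{S}$-equivalent), repeats the construction of Lemma \ref{DehnlemmaAra} transported by $h$ with exactly your level bookkeeping (chain at level $k$, first-stage bounding pairs at $k+1$ giving $h(\gamma_{+})$ at $k+2$, second-stage pairs at $k+2$ giving the twist at $k+3$), and obtains the ``moreover'' clause by applying Lemma \ref{translema} so as to run the same argument on the chain $fh(\Cf)$ for $f \in \Hcp$. Your explicit accounting of which hypothesis feeds which expansion level is, if anything, more detailed than the paper's one-line statement of the same argument.
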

\begin{proof}
 Given that \textit{up to the action of} $\Mod{S}$, $\Cf$ is the only maximal closed chain, there exists $h \in \Mod{S}$ such that $h(\alpha_{i}) = \gamma_{i}$ for $i \in \{0, \ldots, 2g+1\}$. Then, we repeat the procedure of the proof of Lemma \ref{DehnlemmaAra} precomposing by $h$ and using Lemma \ref{translema}, getting $\tau_{\gamma_{2g}}^{\pm 1}(\gamma_{2g-1}) = \tau_{h(\alpha_{2g})}^{\pm 1}(h(\alpha_{2g-1})) \in (\Cf \cup \Bf)^{k+3}$.\\
 \indent Let $f \in \Hcp$. Using Lemma \ref{translema} we can apply the result above to $fh(\Cf)$ and we get that $\tau_{f(\gamma_{2g})}^{\pm 1}(f(\gamma_{2g-1})) \in (\Cf \cup \Bf)^{k+3}$.
\end{proof}
\subsection{Proof of Claim 2: $\tau_{\Cf}^{\pm 1}(\Bf) \cup \tau_{\Bf}^{\pm 1}(\Cf) \subset (\Cf \cup \Bf)^{3}$}\label{subsec4-3}
\indent As in Subsection \ref{subsec4-2}, we first note that letting $\alpha \in \Cf$ and $\beta \in \Bf$, if $\alpha$ and $\beta$ are disjoint, there is nothing to prove and so we assume $i(\alpha,\beta) \neq 0$. By construction we then have that $i(\alpha,\beta) = 1$. In part 1, we first establish the claim for a particular family of maximal closed chains that verify the conditions of Lemma \ref{Dehnlemma}, proving that $\tau_{[\alpha_{0},\ldots,\alpha_{2l}]^{-}}^{\pm 1}(\alpha_{2l+1}) \in (\Cf \cup \Bf)^{3}$ for all $1 \leq l \leq g-2$; then, via the action of $\Hcp$, we prove that $\tau_{\Bf^{-}}^{\pm 1}(\Cf) \cup \tau_{\Cf}^{\pm 1}(\Bf^{-}) \subset (\Cf \cup \Bf)^{3}$. In part 2, we finish the proof via the action of the hyperelliptic involution $\fun{\iota}{S}{S}$ mentioned in Remark \ref{reminvolution}.\\[0.5cm]
\textbf{Part 1:} Let $1 \leq l \leq g-2$. We define the following maximal closed chain: $$\Cf_{l} = \{\alpha_{1}, \alpha_{2}, \ldots, \alpha_{2l}, \alpha_{2l+1}, [\alpha_{0}, \ldots, \alpha_{2l}]^{-}, \alpha_{2g+1}, \alpha_{2g}, \alpha_{2g-1}, \ldots, \alpha_{2l+3}, [\alpha_{2}, \ldots, \alpha_{2l+2}]^{+}\}.$$
We refer the reader to Figure \ref{SecondChain} for an example of such a maximal closed chain.
\begin{figure}[h]
\begin{center}
 \resizebox{11cm}{!}{\input{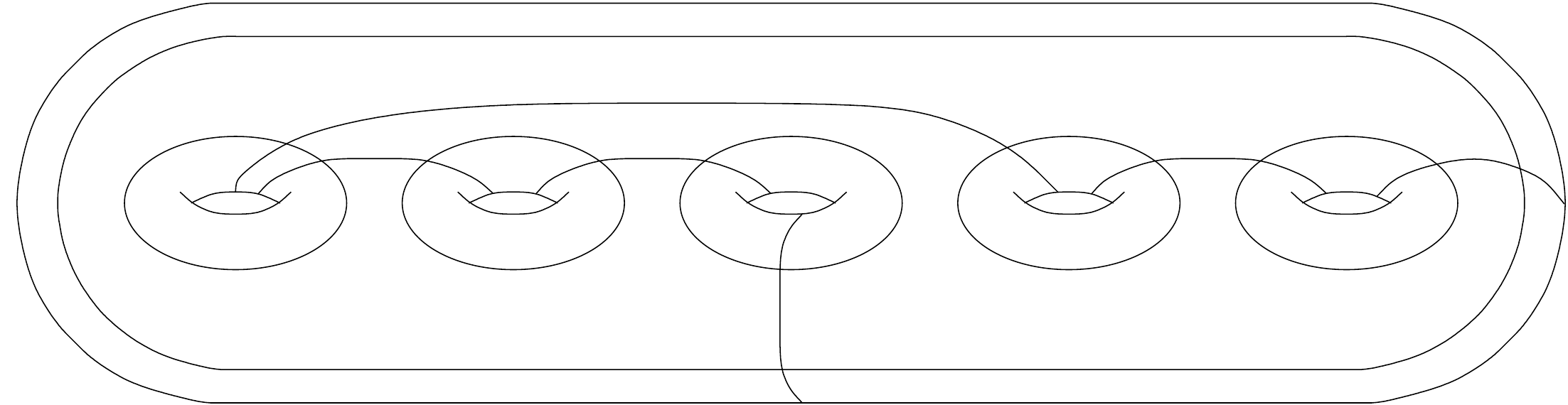_t}} \caption{$\Cf_{2}$ for the genus $5$ surface and the first reordering.}\label{SecondChain}
\end{center}
\end{figure}\\
\indent We now prove, using Lemma \ref{Dehnlemma}, that $\tau_{\Bf^{-}}^{\pm 1}(\Cf) \cup \tau_{\Cf}^{\pm 1}(\Bf^{-}) \subset (\Cf \cup \Bf)^{3}$.\\
\indent In order to facilitate the use of Lemma \ref{Dehnlemma}, we cyclically reorder the elements of $\Cf_{l}$ as follows: $$\gamma_{0} = \alpha_{2g}, \hspace{0.15cm} \gamma_{1} = \alpha_{2g-1}, \hspace{0.15cm} \ldots, \hspace{0.15cm} \gamma_{2g-1} = \alpha_{2l+1}, \hspace{0.15cm} \gamma_{2g} = [\alpha_{0}, \hspace{0.15cm} \ldots, \hspace{0.15cm} \alpha_{2l}]^{-}, \hspace{0.15cm} \mathrm{and} \hspace{0.15cm} \gamma_{2g+1} = \alpha_{2g+1}.$$ Again, see Figure \ref{SecondChain} for an example.\\
\indent By inspection we can verify that $\Cf_{l}$ satisfies the conditions of Lemma \ref{Dehnlemma}, and thus $\tau_{\gamma_{2g}}^{\pm 1}(\gamma_{2g-1}) = \tau_{[\alpha_{0}, \ldots, \alpha_{2l}]^{-}}^{\pm 1}(\alpha_{2l+1}) \in (\Cf \cup \Bf)^{3}$, however for the sake of completeness we give a detailed account of which set of curves uniquely determines the needed curves.\\
For $1 \leq l \leq g-2$ we have:
\begin{center}
\begin{tabular}{rcl}
 $[\gamma_{2g-3},\gamma_{2g-2},\gamma_{2g-1}]^{+}$ & $=$ & $[\alpha_{2l-1},\alpha_{2l},\alpha_{2l+1}]^{+}$\\
 $[\gamma_{2},\ldots,\gamma_{2g-2}]^{+}$ & $=$ & $\langle \alpha_{2g-2},\ldots,\alpha_{2l+3},[\alpha_{2}, \ldots, \alpha_{2l+2}]^{+}, \alpha_{1},\alpha_{2},\ldots,\alpha_{2l},$\\
 & & $[\alpha_{0},\ldots,\alpha_{2l}]^{-}, \alpha_{2g+1}, \alpha_{2g}, [\alpha_{2l+2}, \ldots, \alpha_{2g-2}]^{-}\rangle$\\
 $[\gamma_{2}, \ldots, \gamma_{2g-2}]^{-}$ & $=$ & $[\alpha_{2l+2}, \ldots, \alpha_{2g-2}]^{-}$
\end{tabular}
\end{center}
\indent In the case of $l = 1$ we have:
\begin{center}
\begin{tabular}{rcl}
 $[\gamma_{2g-2},\gamma_{2g-1},\gamma_{2g}]^{+}$ & $=$ & $\alpha_{0}$\\
 $[\gamma_{2g-2},\gamma_{2g-1},\gamma_{2g}]^{-}$ & $=$ & $\langle \alpha_{2},\alpha_{3},[\alpha_{0},\alpha_{1},\alpha_{2}]^{-},\alpha_{0},[\alpha_{2},\alpha_{3},\alpha_{4}]^{+},\alpha_{5},\alpha_{6}, \ldots, \alpha_{2g}\rangle$
\end{tabular}
\end{center}
\indent In the cases with $l > 1$ we have:
\begin{center}
\begin{tabular}{rcl}
 $[\gamma_{2g-2},\gamma_{2g-1},\gamma_{2g}]^{+}$ & $=$ & $[\alpha_{0},\ldots,\alpha_{2l-2}]^{-}$\\
 $[\gamma_{2g-2},\gamma_{2g-1},\gamma_{2g}]^{-}$ & $=$ & $\langle \alpha_{2l},\alpha_{2l+1},[\alpha_{0},\ldots,\alpha_{2l}]^{-},[\alpha_{0},\ldots,\alpha_{2l-2}]^{-}, \alpha_{2l-2}, \alpha_{2l-3}, \ldots, \alpha_{1},$\\
 & & $[\alpha_{2},\ldots,\alpha_{2l+2}]^{+}, \alpha_{2l+3},\alpha_{2l+4},\ldots, \alpha_{2g}\rangle$
\end{tabular}
\end{center}
\indent Letting $l$ vary from $1$ to $g-2$, and applying Lemma \ref{Dehnlemma}, we have that $\tau_{\gamma_{2g}}^{\pm 1}(\gamma_{2g-1}) = \tau_{[\alpha_{0},\ldots,\alpha_{2l}]^{-}}^{\pm 1}(\alpha_{2l+1}) \in (\Cf \cup \Bf)^{3}$ for all $1 \leq l \leq g-2$.\\
\indent Now, using the fact that $\Hcp < \EMod{S}$ acts transitively on $\Cf$, we have as a consequence that it also acts transitively on each of the sets $\{[\alpha_{i}, \ldots, \alpha_{i+2l}]^{-}: 0 \leq i \leq 2g+1\}$ for $1 \leq l \leq g-2$. This implies that given $[\alpha_{i}, \ldots, \alpha_{i+2l}]^{-} \in \Bf^{-}$, there exists $h \in \Hcp$ such that $h([\alpha_{i}, \ldots, \alpha_{i+2l}]^{-}) = [\alpha_{0},\ldots,\alpha_{2l}]^{-}$. Thus, by Lemmas \ref{translema} and \ref{Dehnlemma}, we have then that $\tau_{\Bf^{-}}^{\pm 1}(\Cf) \subset (\Cf \cup \Bf)^{3}$, and by Equation \ref{taupm}, we obtain that $\tau_{\Bf^{-}}^{\pm 1}(\Cf) \cup \tau_{\Cf}^{\pm 1}(\Bf^{-}) \subset (\Cf \cup \Bf)^{3}$.\\[0.3cm]
\textbf{Part 2} To prove the rest of the cases, recall that the hyperelliptic involution $\iota$ is an element of $\stab{\Cf}$ and $\iota([\alpha_{i}, \ldots, \alpha_{i+2k}]^{+}) = [\alpha_{i}, \ldots, \alpha_{i+2k}]^{-}$ for all $\{\alpha_{i}, \ldots, \alpha_{i+2k}\} \subset \Cf$. Given that (as was shown in part 1) for all $1 \leq l \leq g-2$, the families of maximal closed chains $\Hcp(\Cf_{l})$ satisfy the conditions of Lemma \ref{Dehnlemma}, we have that Lemma \ref{translema} yields that the same is true for the maximal closed chains $\iota\Hcp(\Cf_{l})$. Therefore $$\tau_{\iota\Hcp(\Bf^{-})}^{\pm 1}(\iota\Hcp(\Cf)) \cup \tau_{\iota\Hcp(\Cf)}^{\pm 1}(\iota\Hcp(\Bf^{-})) = \tau_{\Bf^{+}}^{\pm 1}(\Cf) \cup \tau_{\Cf}^{\pm 1}(\Bf^{+}) \subset (\Cf \cup \Bf)^{3},$$ as desired.
\subsection{Proof of Claim 3: $\tau_{\zeta}^{\pm 1}(\Bf) \subset (\Cf \cup \Bf)^{4}$}\label{Chap1Sec4}
\indent Recall $\zeta = [\alpha_{0},\alpha_{1},\alpha_{2}]^{-}$ and let $\gamma \in \Bf$. In the cases where $\zeta$ is disjoint from $\gamma$, we have $\tau_{\zeta}^{\pm 1}(\gamma) = \gamma \in \Cf \cup \Bf$. So we assume that $i(\gamma,\zeta) \neq 0$, which by construction implies $$\gamma \in \{[\alpha_{1}, \ldots, \alpha_{2k+1}]^{\pm},[\alpha_{3}, \ldots, \alpha_{2k+1}]^{\pm}, [\alpha_{2}, \ldots, \alpha_{2l}]^{-}: 1 \leq k \leq g-1, 2 \leq l \leq g-1\}.$$
\indent In these cases there exist subsets of $C_{0} \subset \Cf$ and $\{\beta_{0}\} \subset \Bf$, such that $\gamma = \langle C_{0} \cup \{\beta_{0}\} \rangle$ and $\beta_{0}$ is disjoint from $\zeta$. Note that $\tau_{\zeta}^{\pm 1}(C_{0}) \subset (\Cf \cup \Bf)^{3}$ by claim 2, and $\tau_{\zeta}^{\pm 1}(\beta_{0}) = \beta_{0} \in \Cf \cup \Bf$ by construction. Therefore $$\tau_{\zeta}^{\pm 1}(\gamma) = \tau_{\zeta}^{\pm 1}(\langle C_{0} \cup \beta_{0} \rangle) = \langle \tau_{\zeta}^{\pm 1}(C_{0}) \cup \tau_{\zeta}^{\pm 1}(\beta_{0}) \rangle \in (\Cf \cup \Bf)^{4}.$$\\
\indent For a more detailed account on $C_{0}$ and $\beta_{0}$ see \cite{Thesis}. For some examples see Figures \ref{LastClaimChap1Fig1} and \ref{LastClaimChap1Fig3}.
\begin{figure}[h]
 \begin{center}
  \resizebox{8cm}{!}{\input{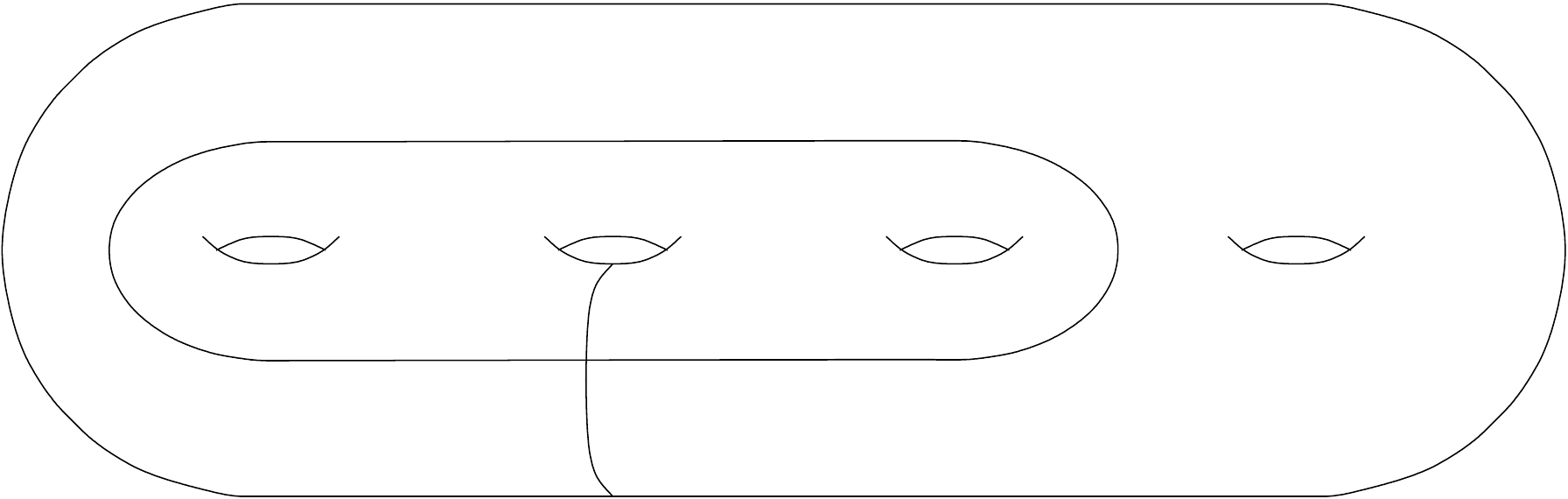_t}}\\[0.3cm]
  \resizebox{8cm}{!}{\input{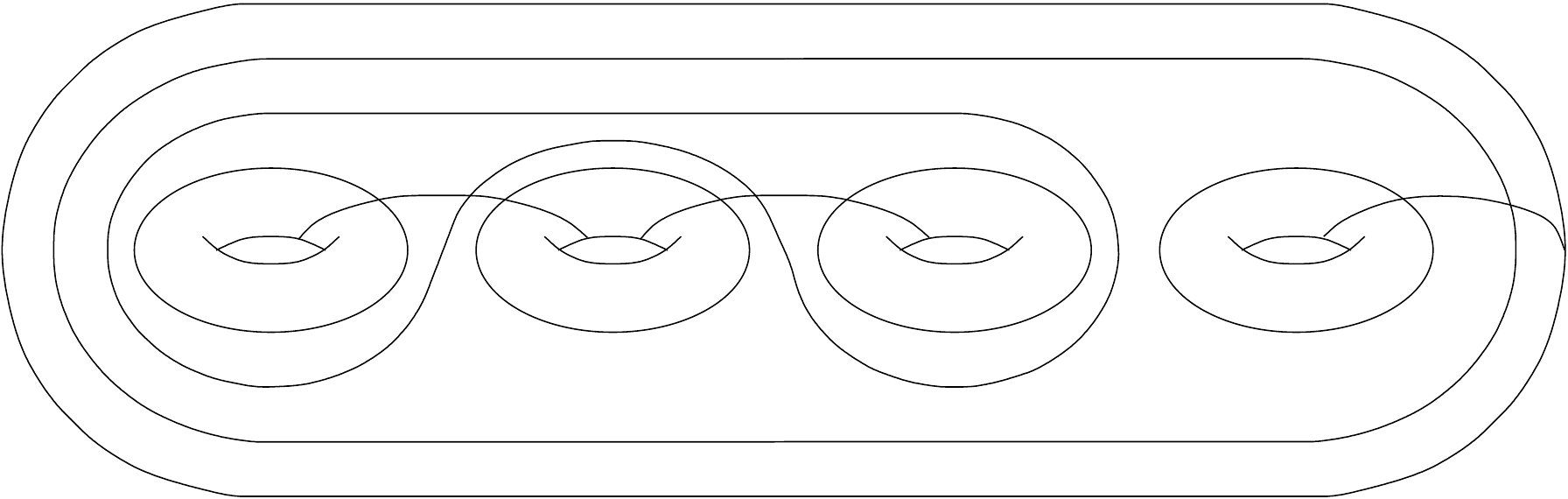_t}} \caption{An example of $\gamma = [\alpha_{1}, \ldots, \alpha_{2k+1}]^{+}$, and the corresponding $C_{0}$ and $\beta_{0}$.}\label{LastClaimChap1Fig1}
 \end{center}
\end{figure}\\
\begin{figure}[h]
 \begin{center}
  \resizebox{8cm}{!}{\input{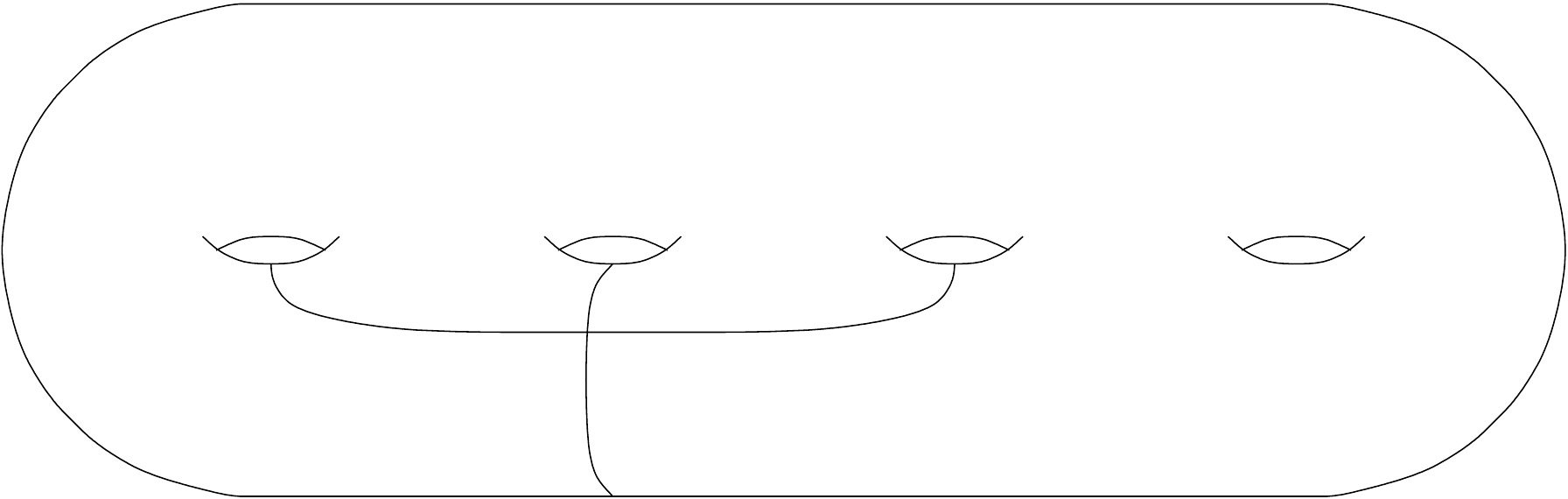_t}}\\[0.3cm]
  \resizebox{8cm}{!}{\input{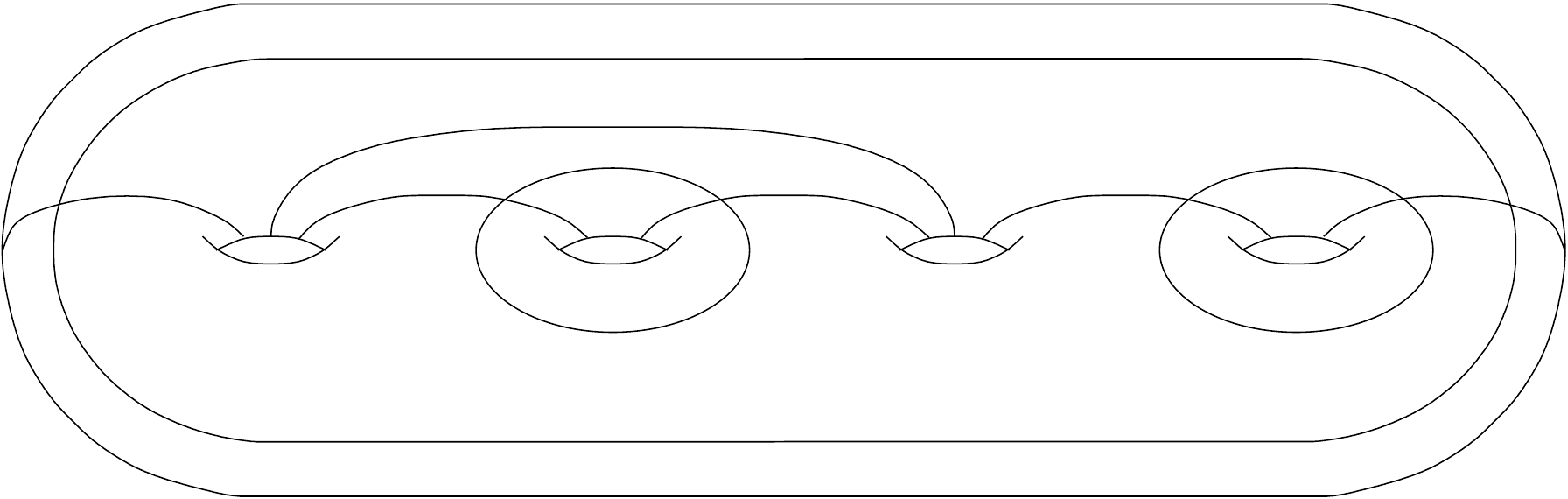_t}} \caption{An example of $\gamma = [\alpha_{2}, \ldots, \alpha_{2k}]^{-}$, and the corresponding $C_{0}$ and $\beta_{0}$.}\label{LastClaimChap1Fig3}
 \end{center}
\end{figure}
\section{Exhaustion of $\ccomp{S}$ for punctured surfaces}\label{chap2}
\indent In this section, we suppose that $S = S_{g,n}$ with genus $g \geq 3$ and $n \geq 1$ punctures. In Subsection \ref{chap2sec1} we fix the principal set and give notation; in addition to a principal set of curves analogous to $\Cf$ and $\Bf$, we introduce some auxiliary curves to aid the exposition in Subsection \ref{subsec5-2}, we also prove they are in specific expansions of the principal set, and state and prove several technical propositions; in Subsection \ref{chap2sec3} we prove the main theorem, pending the proof of a technical lemma; Subsections \ref{chap2sec4}, \ref{chap2sec5}, \ref{chap2sec6}, \ref{chap2sec7}, \ref{chap2sec8} and \ref{chap2sec9} give the proofs of the claims for the technical lemma.
\subsection{Statement of Theorem \ref{Thm3}}\label{chap2sec1}
\indent The idea of the proof of the analogous result to Theorem \ref{Thm2} is the same as in the closed surface case. Using arguments similar to those of Theorem \ref{Thm2} we show that every nonseparating curve is in some expansion and then use that to prove the same for the separating curves.\\
\indent However, the presence of punctures induce several small but important changes, both in the principal set of curves that is used (which while analogous to the closed case, is not as symmetric and thus induces changes in the proofs), and in the manner auxiliary curves are used. For this reason, we first introduce the sets $\Cf$ and $\Bf_{0}$ whose union is the \textbf{principal set} and then we state in detail the theorem to prove.\\
\indent Let $\Cf_{0} = \{\alpha_{1}, \ldots, \alpha_{2g+1}\}$ be the chain depicted in Figure \ref{OriginalChainPunctured}, and $\Cf_{f} = \{\alpha_{0}^{0}, \alpha_{0}^{1}, \ldots, \alpha_{0}^{n}\}$ be the multicurve also depicted in Figure \ref{OriginalChainPunctured}, and $\Cf \ColonEqq \Cfn \cup \Cff$.
\begin{figure}[h]
 \begin{center}
  \resizebox{10cm}{!}{\input{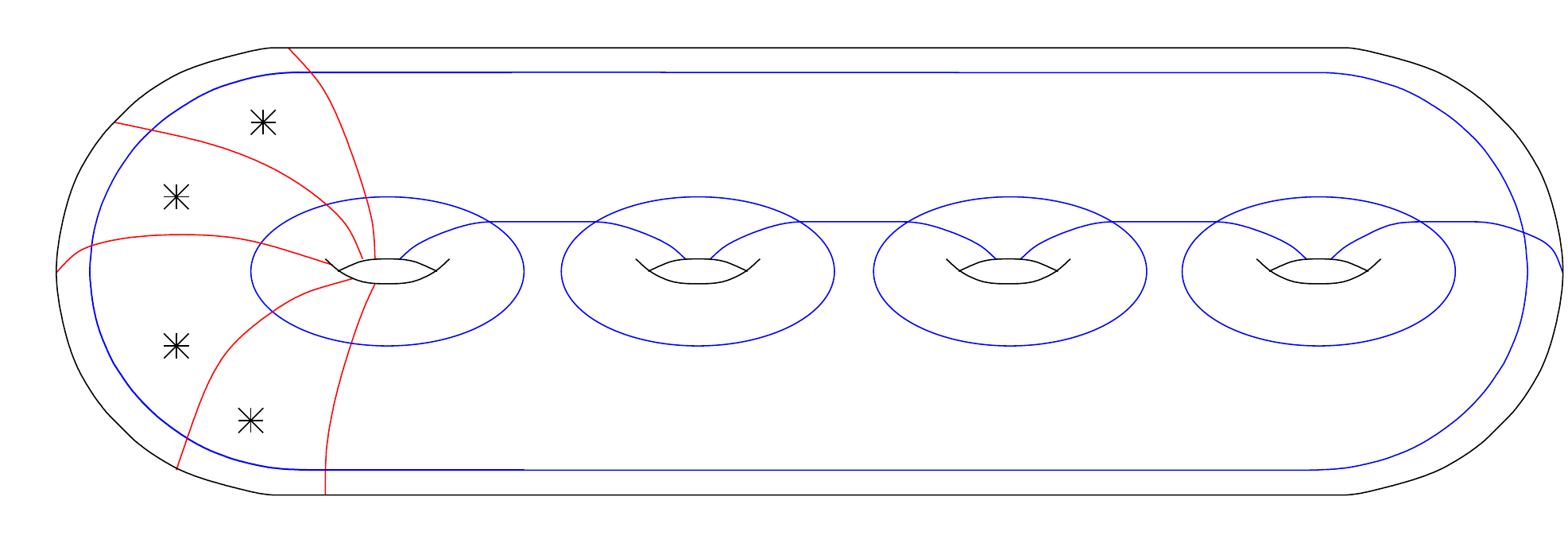_t}} \caption{$\Cf_{0}$ in blue and $\Cf_{f}$ in red for $S_{5,4}$.}\label{OriginalChainPunctured}
 \end{center}
\end{figure}
\begin{Rem}
 Note that for $i \in \{0, \ldots, n-1\}$, we have that $S \backslash \{\alpha_{0}^{i},\alpha_{0}^{i+1}\}$ has a connected component homeomorphic to a thrice punctured sphere. Also note that for each $j \in \{0, \ldots, n\}$, the set $C_{j} = \{\alpha_{0}^{j}, \alpha_{1}, \ldots, \alpha_{2g+1}\}$ is a maximal closed chain.
\end{Rem}
\indent Adapting the notation used in the previous section, for $i \in \{0, \ldots, n\}$, let us consider the closed chain $C_{i} \subset \Cf$. Then we denote the curve $\alpha_{0}^{i}$ by $\alpha_{0}$ to simplify notation when it is understood that $\alpha_{0} \in C_{i}$. As such $C_{i}$ has the subsets: $C_{i(o)} = \{\alpha_{j} \in C_{i} : j$ is odd$\}$ and $C_{i(e)} = \{\alpha_{j} \in C_{i} : j$ is even$\}$. These subsets are such that:
\begin{itemize}
 \item $S \backslash C_{i(e)}$ has two connected components, $S_{i(e)}^{+} = S_{0,i+g+1}$ and $S_{i(e)}^{-} = S_{0, n-i+ g+1}$.
 \item $S \backslash C_{i(o)}$ has two connected components, $S_{i(o)}^{+} = S_{0,n+g+1}$ and $S_{i(o)}^{-} = S_{0,g+1}$.
\end{itemize}
\indent Recalling that the subindices are modulo $2g+2$, we denote by $[\alpha_{j}, \ldots, \alpha_{j + 2k}]^{+}$ for $0 < k < g-1$, the boundary component of a closed regular neighbourhood $N(\{\alpha_{j}, \ldots, \alpha_{j + 2k}\})$, that is contained in either $S_{i(o)}^{+}$ or in $S_{i(e)}^{+}$. Analogously, $[\alpha_{j}, \ldots, \alpha_{j + 2k}]^{-}$ denotes the boundary component of a closed regular neighbourhood $N(\{\alpha_{j}, \ldots, \alpha_{j + 2k}\})$, that is contained in either $S_{i(o)}^{-}$ or in $S_{i(e)}^{-}$. 
\begin{Rem}
 Note that this notation is the same as in the closed surface case for the set $\Bf$; however, when $0 \in \{j, \ldots, j+2k\}$ (modulo $2g+2$), the curves $[\alpha_{j}, \ldots, \alpha_{j+2k}]^{\pm}$ depend on the choice of $i \in \{0, \ldots, n\}$ (recall $\alpha_{0}$ stands for $\alpha_{0}^{i}$).
\end{Rem}
\indent Let $J = \{2l, \ldots, 2(l+k)\}$, for some $1 \leq k \leq g-1$, be a proper interval in the cyclic order modulo $2g+2$. Let also $\beta_{J}^{\pm} = [\alpha_{2l}, \ldots, \alpha_{2(l+k)}]^{\pm}$ (with $\alpha_{0} = \alpha_{0}^{1}$ when necessary). See Figure \ref{ExamplesBf0} for examples. We define $$\Bf_{0} \ColonEqq \{\beta_{J}^{\pm} : J = \{2l, \ldots, 2(l+k)\}, 1 \leq k \leq g-1\}.$$
\begin{figure}[h]
 \begin{center}
  \resizebox{10cm}{!}{\input{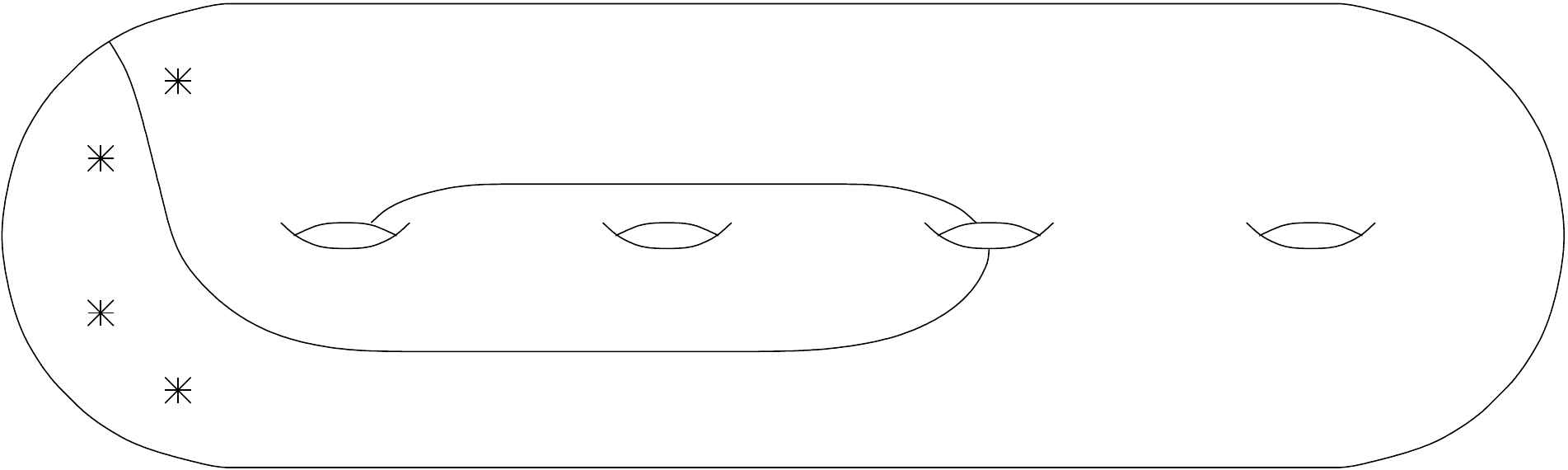_t}}\caption{Examples of curves $\beta_{\{2,3,4\}}^{+}$ and $\beta_{\{0,1,2,3,4\}}^{-}$.}\label{ExamplesBf0}
 \end{center}
\end{figure}\\
\indent Now that we have the principal set of curves, we are able to state the punctured case version of Theorem \ref{TheoA}.
\begin{Teo}\label{Thm3}
 Let $S$ be an orientable surface with genus $g \geq 3$ and $n \geq 1$ punctures; let also $\Cf$ and $\Bf_{0}$ be defined as above. Then $\bigcup_{i \in \nat} (\Cf \cup \Bfn)^{i} = \ccomp{S}$.
\end{Teo}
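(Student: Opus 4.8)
The plan is to follow the template of the closed case (Theorem~\ref{Thm2}) while absorbing the asymmetries forced by the punctures, splitting the argument into a nonseparating part and a separating part with the punctured analogue of Lemma~\ref{CUBen4} as the engine. First I would fix a finite generating set $\Gf$ of the mapping class group whose twisting curves lie in the principal set $\Cf \cup \Bfn$ (or in a fixed low expansion of it): a Humphries--Lickorish-type set for $S_{g,n}$ built from the chain $\Cfn$, the puncture-curves $\Cff$, and a few curves of $\Bfn$ playing the role of $\zeta$. One must be a little careful here, since for $n \geq 1$ one maximal chain no longer generates; I would check that the group generated by the $\tau_{\delta}$, $\delta \in \Gf$, still acts transitively on nonseparating curves. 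The crux is then the technical lemma $\tau_{\Gf}^{\pm 1}(\Cf \cup \Bfn) \subset (\Cf \cup \Bfn)^{N}$ for some fixed $N$, which I would prove through a sequence of claims paralleling Claims~1--3, reusing the transfer principle $h(\langle B \rangle) = \langle h(B)\rangle$ and the ``move a twist onto a good chain'' device of Lemma~\ref{Dehnlemma}.

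Granting this lemma, the nonseparating part is immediate and formally identical to the proof of Theorem~\ref{Thm2}: any nonseparating curve $\gamma$ equals $h(\alpha)$ for a nonseparating $\alpha \in \Cf \cup \Bfn$ and some $h$ in the group generated by $\Gf$; writing $h$ as a word of length $\ell$ in $\Gf$ and applying the lemma inductively places $\gamma \in (\Cf \cup \Bfn)^{N\ell}$. The reductions carried out in the closed case by symmetry must, however, be reorganized. In the closed case Remark~\ref{reminvolution} supplied a hyperelliptic involution $\iota$ exchanging $\Bf^{+}$ and $\Bf^{-}$ and a splitting $\Hcf = \Hcp \sqcup \iota\Hcp$; with punctures present there is in general no such symmetry. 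In its place I would work with the setwise stabilizer of $\Cf$, which permutes the puncture-curves $\Cff$ and moves between the maximal closed chains $C_{0},\ldots,C_{n}$, together with an analogue of Lemma~\ref{translema}, to reduce the many orbits of curves $\beta_J^{\pm} \in \Bfn$ to a handful of base cases treated by explicit determining sets.

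For the separating part I would stratify a separating curve $\gamma$ by the topological type of a complementary component, writing $S \setminus \gamma$ as $R_1 = S_{g',n'}$ and $R_2 = S_{g-g',n-n'}$ with each side essential. When both sides carry genus, I can fill each side by curves that are nonseparating in $S$, recovering the closed-case pair-of-chains determination $\gamma = \langle C_1 \cup C_2 \rangle$ with all determining curves already placed in some expansion by the nonseparating part, the only new input being finitely many curves that detect which punctures lie on which side. The delicate case is a genus-$0$ complementary component: such a piece contains no curve that is nonseparating in $S$, so the curves needed to fill it are themselves separating. The cleanest case, a twice-punctured disk, has a complement with no essential curves and so is determined by filling the genus side alone; the remaining genus-$0$ punctured pieces I would handle by a secondary induction on the number of punctures cut off, using the curves determined at earlier stages to bootstrap the determination of curves bounding larger genus-$0$ punctured subsurfaces.

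I expect the main obstacle to be the technical lemma itself. The loss of the global involution $\iota$ means that the symmetry reductions available for the closed surface must be replaced by direct computations on each of the chains $C_{0},\ldots,C_{n}$, and the bookkeeping of which curves $\beta_J^{\pm}$ occur and which explicit finite sets uniquely determine the intermediate curves (as in the tables of Claim~2) becomes substantially heavier; this is presumably why the proof is split across the several subsections announced for Section~\ref{chap2}. The secondary induction for genus-$0$ punctured pieces is the subtler conceptual point, since it is the only place where separating curves are used to determine other separating curves, and one must verify that the induction is well-founded, which I would ensure by ordering the pieces by the number of enclosed punctures.
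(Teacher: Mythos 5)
Your skeleton is the paper's own: a principal set, a technical lemma placing twist-images of it in a bounded expansion, generation plus transitivity to capture nonseparating curves, and unique determination for the remaining curves. Where you genuinely diverge is in dispensing with half-twists. Note first that your opening demand --- a generating set of $\Mod{S}$ consisting of Dehn twists --- is unattainable for $n \geq 2$, since twists lie in the pure mapping class group; but your fallback condition is the right one and it holds: the twists along such a $\Gf$ generate the pure mapping class group (Section 4 of \cite{FarbMar}), which acts transitively on nonseparating curves by change of coordinates. The paper instead keeps the half-twists $\eta_{\Hff}$ in its generating set and proves Claims 5--6 (Subsections \ref{chap2sec8}--\ref{chap2sec9}) precisely so that \emph{outer} curves can also be swept into expansions by the group action; its determination step for nonouter separating curves (Figure \ref{PseudoChains}) then uses nonseparating and outer curves, which is exactly your stratification. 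Your alternative --- determine an outer curve as $\langle F \rangle$ with $F$ a finite set of nonseparating curves filling the positive-genus side (correct, since a twice-punctured disc contains no essential curve of $S$ other than its boundary), then induct on the number of punctures a genus-zero piece cuts off --- is sound; in fact the induction collapses, because outer curves encircling cyclically adjacent pairs of punctures already intersect every essential curve properly contained in such a piece. So your route would delete $\Hff$, Claims 5--6 and the half-twist part of Lemma \ref{CUBen12}, at the cost of a slightly longer separating-curve argument; that is a legitimate simplification of the paper's global structure.

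The genuine gap is in your mechanism for the technical lemma, which is where essentially all of the paper's labour sits. You propose to replace the closed-case symmetry (Remark \ref{reminvolution} and Lemma \ref{translema}) by the setwise stabilizer of $\Cf$, claimed to ``move between the maximal closed chains $C_{0}, \ldots, C_{n}$,'' together with an analogue of Lemma \ref{translema}. That group cannot do this. The curves of $\Cff$ are arranged linearly along the first handle --- consecutive ones cobound once-punctured annuli, and only $\alpha_{0}^{0}$ and $\alpha_{0}^{n}$ abut the rest of the surface --- so the setwise stabilizer of $\Cf$ realizes at most the order-reversing flip $C_{i} \mapsto C_{n-i}$; it is not transitive on $\Cff$ and does not connect the chains. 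Worse, translation \emph{along} a single chain, the engine of Claim 1, is effected in the paper by the rotation $h_{i}$ obtained by cutting along $C_{i}$ (Figure \ref{SpheresWithHandlesFig}), and $h_{i}$ does \emph{not} preserve $\Cf$: it throws the curves $\alpha_{0}^{k}$, $k \neq i$, off the configuration. Hence no analogue of Lemma \ref{translema} applies to the maps one actually needs, and translated determining sets do not stay in expansions for free. This is exactly why the paper introduces the auxiliary families $\Df$, $\Ef$, $\Bf_{T}$, $\Bf$ and proves Propositions \ref{translemaprop2}, \ref{translemaprop1} and \ref{translemaprop3} --- explicit verification, for every translate and every choice of $i$, that the translated auxiliary curves lie in bounded expansions --- before any transfer via Proposition \ref{OnlyCUB0} can run. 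Your ``handful of base cases treated by explicit determining sets'' is the right instinct, but the reduction to those base cases is not a symmetry argument: as planned, via the stabilizer of $\Cf$, it stalls, and repairing it amounts to redoing Subsections \ref{subsec5-2} and \ref{chap2sec4}--\ref{chap2sec7} of the paper.
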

\subsection{Auxiliary curves}\label{subsec5-2}
\indent We need for the proof some auxiliary curves and some technical results.\\[0.3cm]
\indent For $0 \leq i \leq n -2$, we define $$\eps{i}{i+2} \ColonEqq \langle\Cfn \cup (\Cff \backslash \{\alpha_{0}^{i+1}\})\rangle;$$ note that $\eps{i}{i+2} \in \Cf^{1}$; this can be seen using Figure \ref{OriginalChainPunctured} and removing $\alpha_{0}^{i+1}$ for the chosen $i$.\\
\indent For $0 \leq i < j \leq n$ with $j - i > 2$, we define the curve: $$\eps{i}{j} \ColonEqq \langle\Cfn \cup (\Cff \backslash \{\alpha_{0}^{k} : i < k < j\}) \cup \{\eps{k}{k+2} : i \leq k \leq j-2\}\rangle;$$ note that $\eps{i}{j} \in \Cf^{2}$.\\
\indent Then, we define the set: $$\Df \ColonEqq \{\eps{i}{j} : j-i > 1\} \subset \Cf^{2}.$$
\indent We now expand this set. Let $0 \leq i \leq j \leq n$. We define $$C^{i,j} \ColonEqq \{\alpha_{0}^{i}, \ldots, \alpha_{0}^{j}, \alpha_{1}, \ldots, \alpha_{2g+1}\}, \hspace{0.2cm} \mathrm{and} \hspace{0.2cm} E^{i,j} \ColonEqq \{\eps{k}{l} \in \Df : 0 \leq k < l \leq i, \hspace{0.3cm} \mathrm{or} \hspace{0.3cm} j \leq k < l \leq n\}.$$
\indent Note that $C^{i,i} = C_{i}$, and $E^{i,j} = \Df \backslash \{\eps{k}{l} : (\exists \gamma \in C^{i,j}) \hspace{0.3cm} i(\eps{k}{l}, \gamma) \neq 0\}$.\\
\indent Let $k \in \{1, \ldots, g\}$, and define (see Figure \ref{DefEpspfig1}) $$\epsp{k}{i}{j} \ColonEqq \langle (C^{i,j} \backslash \{\alpha_{2k}\}) \cup E^{i,j}\rangle;$$ for $k = 0$ we take $\epsp{0}{i}{j} \ColonEqq \eps{i}{j}$, and for $k = -1$ we take $\epsp{-1}{i}{j} = \epsp{g}{i}{j}$.
\begin{figure}[h]
 \begin{center}
  \resizebox{10cm}{!}{\input{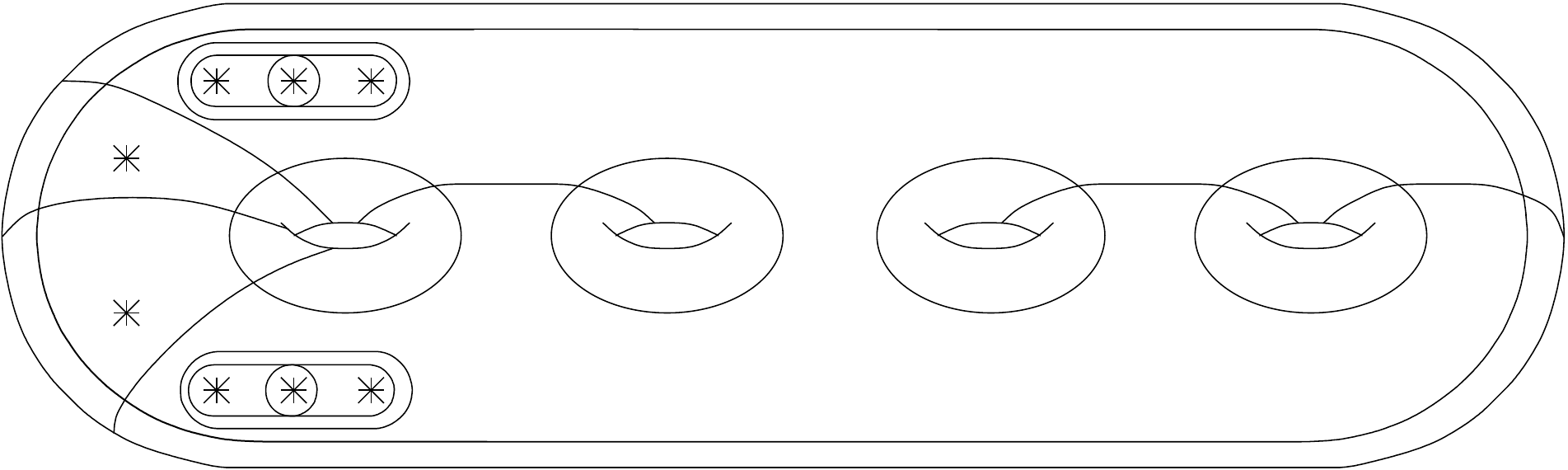_t}}\\[0.3cm]
  \resizebox{10cm}{!}{\input{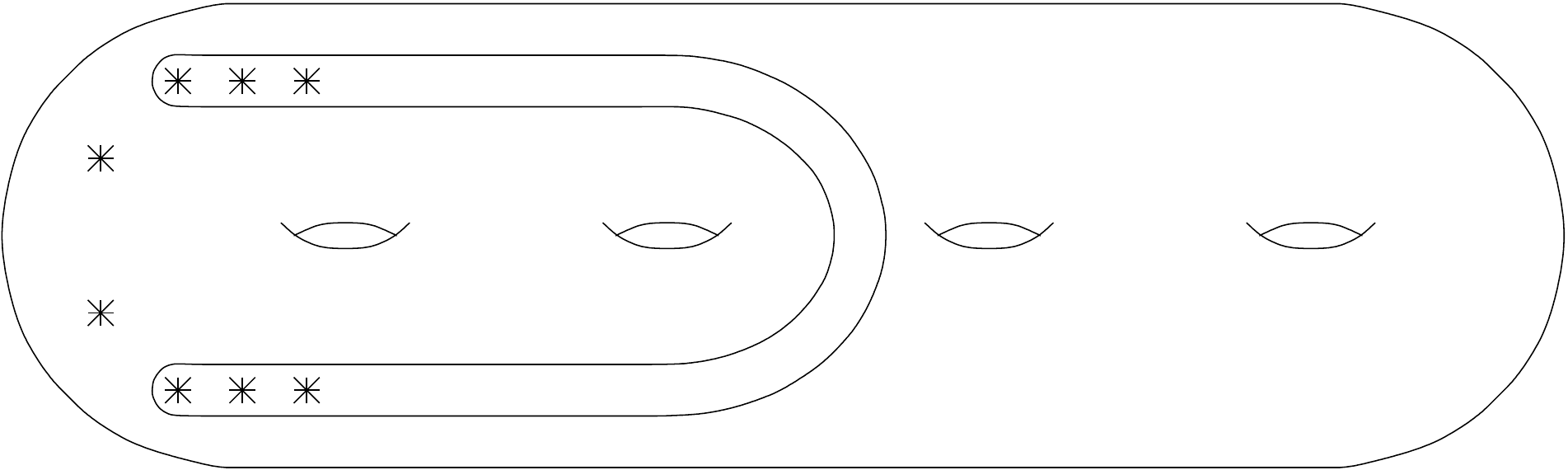_t}} \caption{The curve $\epsp{k}{i}{j} \ColonEqq \langle (C^{i,j} \backslash \{\alpha_{2k}\}) \cup E^{i,j}\rangle$.} \label{DefEpspfig1}
 \end{center}
\end{figure}\\
\indent Finally we define $$\Ef \ColonEqq \Df \cup \{\epsp{k}{i}{j} : 0 \leq i \leq j \leq n, \hspace{0.15cm} k \in \{1, \ldots, g\}\}$$
\indent Note that $\Ef \subset \Cf^{3}$ by construction. See Figure \ref{MathfrakD} for examples of curves in $\Ef$.
\begin{figure}[h]
 \begin{center}
  \resizebox{10cm}{!}{\input{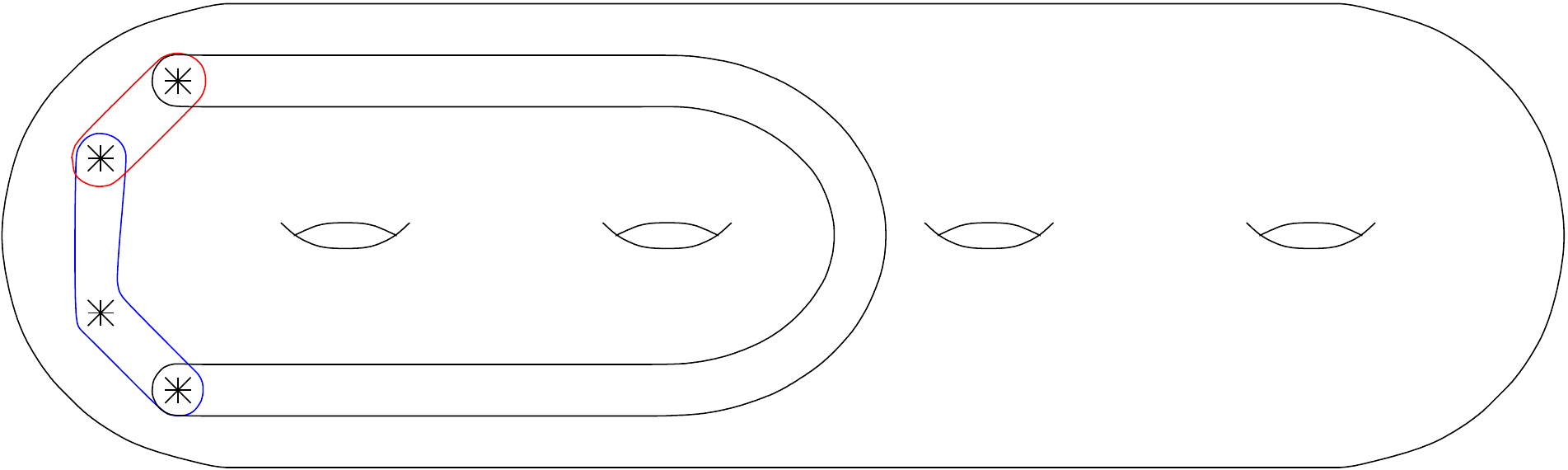_t}} \caption{Examples of curves in $\Ef$.}\label{MathfrakD}
 \end{center}
\end{figure}
\begin{Rem}
 Note that the sets $\Df$ and $\Ef$ are only defined when $n \geq 2$. For this reason, if $n = 1$ we define $\Df = \Ef = \varnothing$.
\end{Rem}
\indent Now, in the case where $1 \leq j \leq n-1$. We define the following curves (see Figure \ref{ExamplesBetaT1} for examples): $$\beta_{\{0,1,2\}}^{j,+} \ColonEqq \langle (\Cff \backslash \{\alpha_{0}^{k} : k < j\}) \cup (\Cfn \backslash \{\alpha_{3}, \alpha_{2g+1}\}) \cup E^{j,n} \cup \{\epsp{k}{1}{n-1} : 2 \leq k \leq g\}\rangle;$$
$$\beta_{\{0,1,2\}}^{j,-} \ColonEqq \langle (\Cff \backslash \{\alpha_{0}^{k} : k > j\}) \cup (\Cfn \backslash \{\alpha_{3}, \alpha_{2g+1}\}) \cup E^{0,j} \cup \{\epsp{k}{1}{n-1} : 2 \leq k \leq g\}\rangle;$$
in the case where $j = n$, we define (see Figure \ref{DefBeta012nplusfig1}): $$\beta_{\{0,1,2\}}^{n,+} \ColonEqq \langle(\Cfn \backslash \{\alpha_{3}, \alpha_{2g+1}\}) \cup \{\alpha_{0}^{n}\} \cup \Df \cup \{\beta_{\{4, \ldots, 2g\}}^{\pm}\}\rangle;$$ $$\beta_{\{0,1,2\}}^{n,-} \ColonEqq \beta_{\{4, \ldots, 2g\}}^{-} \hspace{0.5cm}(\in \Bf_{0}).$$
\begin{figure}[h]
 \begin{center}
  \resizebox{10cm}{!}{\input{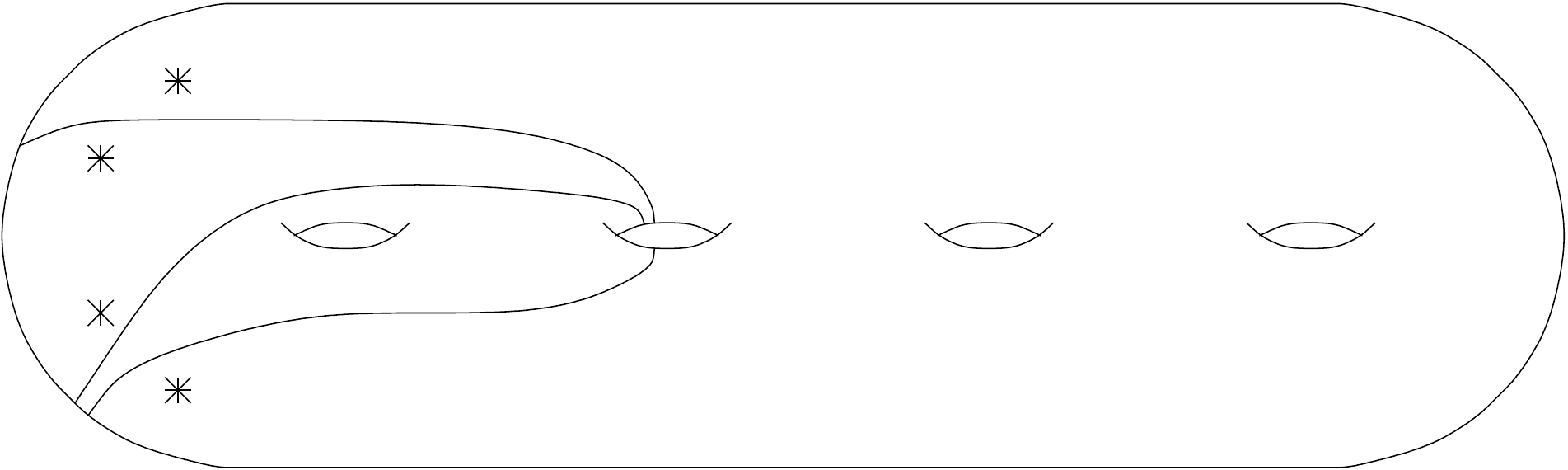_t}} \caption{Examples of curves $\beta_{\{0,1,2\}}^{1,+}$, $\beta_{\{0,1,2\}}^{3,+}$ and $\beta_{\{0,1,2\}}^{3,-}$ in $S_{4,4}$.}\label{ExamplesBetaT1}
 \end{center}
\end{figure}
\begin{figure}[h]
 \begin{center}
  \resizebox{10cm}{!}{\input{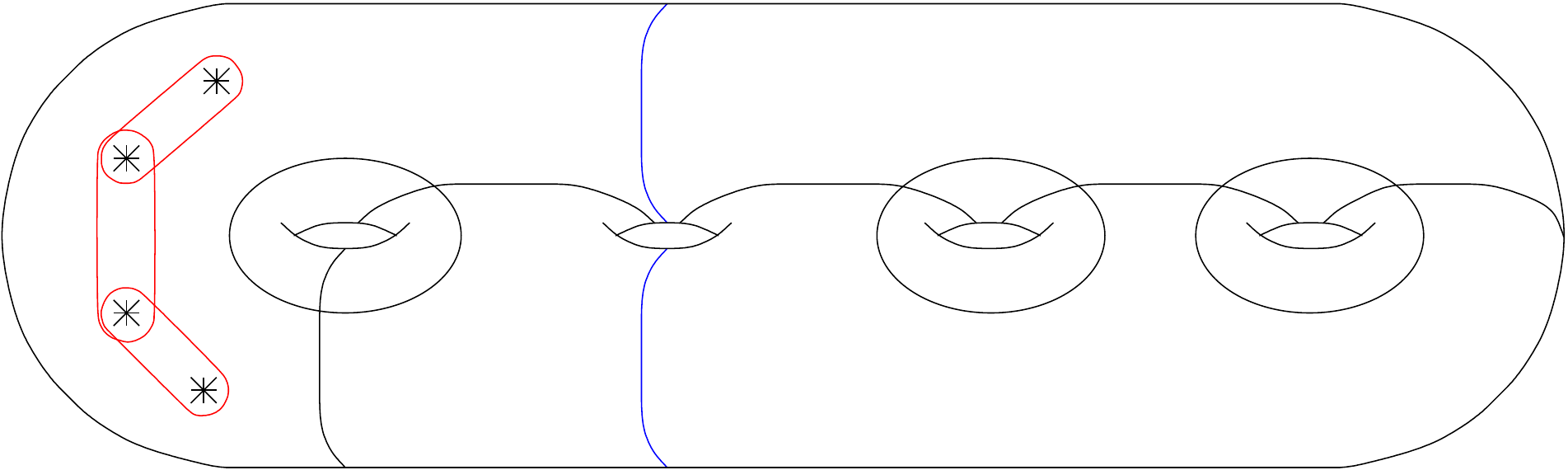_t}}\\[0.3cm]
  \resizebox{10cm}{!}{\input{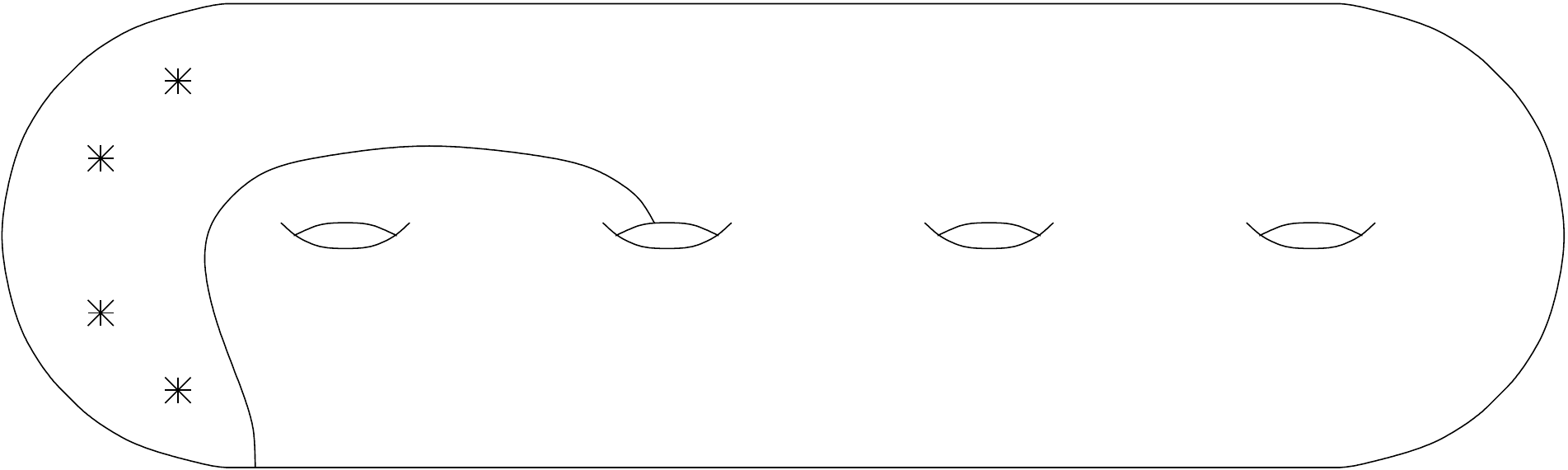_t}} \caption{The curve $\beta_{\{0,1,2\}}^{4,+} \ColonEqq \langle(\Cfn \backslash \{\alpha_{3}, \alpha_{2g+1}\}) \cup \{\alpha_{0}^{n}\} \cup \Df \cup \{\beta_{\{4, \ldots, 2g\}}^{\pm}\}\rangle$.} \label{DefBeta012nplusfig1}
 \end{center}
\end{figure}\\
\indent Similarly, we define the following curves: in the case where $1 \leq j \leq n-1$, we define (see Figure \ref{ExamplesBetaT2} for examples): $$\beta_{\{2g,2g+1,0\}}^{j,+} \ColonEqq \langle(\Cfn \backslash \{\alpha_{1}, \alpha_{2g-1}\}) \cup (\Cff \backslash \{\alpha_{0}^{k} : k < j\}) \cup E^{j,n} \cup \{\epsp{k}{1}{n-1} : 1 \leq k \leq g-1\}\rangle;$$
$$\beta_{\{2g,2g+1,0\}}^{j,-} \ColonEqq \langle(\Cfn \backslash \{\alpha_{1},\alpha_{2g-1}\}) \cup (\Cff \backslash \{\alpha_{0}^{k} : k > j\}) \cup E^{0,j} \cup \{\epsp{k}{1}{n-1} : 1 \leq k \leq g-1\}\rangle;$$
in the case where $j = n$, we define (see Figure \ref{DefBeta2g2gplus10} for examples): $$\beta_{\{2g,2g+1,0\}}^{n,+} \ColonEqq \langle (\Cfn \backslash \{\alpha_{1}, \alpha_{2g-1}\}) \cup \{\alpha_{0}^{n}\} \cup \Df \cup \{\beta_{\{2, \ldots, 2g-2\}}^{\pm}\}\rangle$$
$$\beta_{\{2g,2g+1,0\}}^{n,-} \ColonEqq \beta_{\{2, \ldots, 2g-2\}}^{-}.$$
\begin{figure}[h]
 \begin{center}
  \resizebox{10cm}{!}{\input{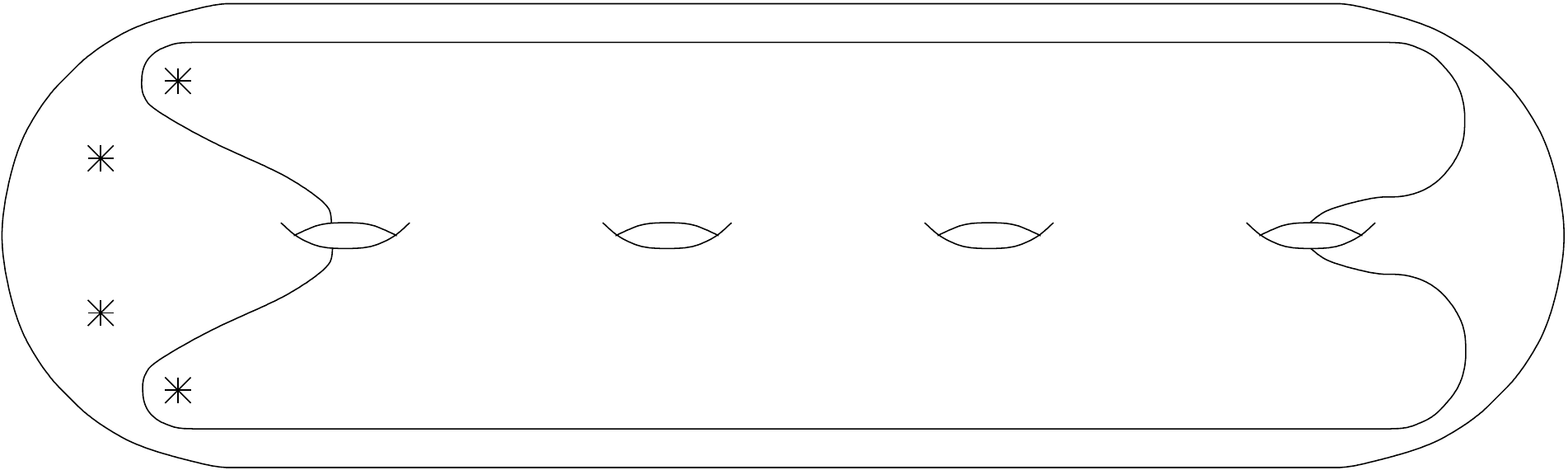_t}} \caption{Examples of curves $\beta_{\{2g,2g+1,0\}}^{1,+}$ and $\beta_{\{2g,2g+1,0\}}^{3,-}$ in $S_{4,4}$.}\label{ExamplesBetaT2}
 \end{center}
\end{figure}
\begin{figure}[h]
 \begin{center}
  \resizebox{10cm}{!}{\input{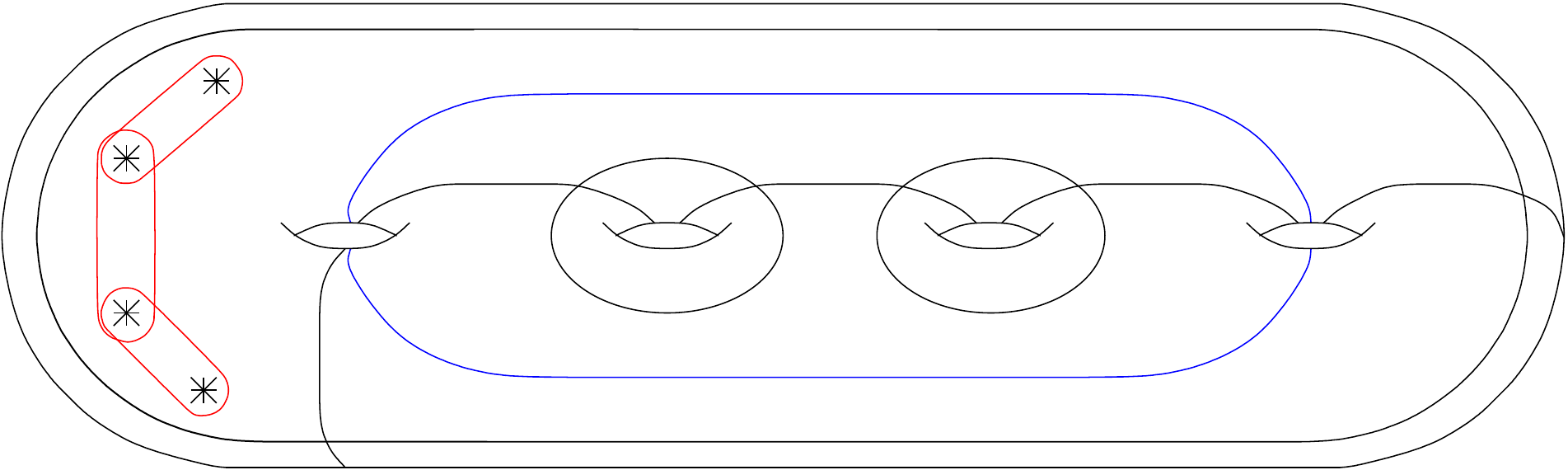_t}}\\[0.3cm]
  \resizebox{10cm}{!}{\input{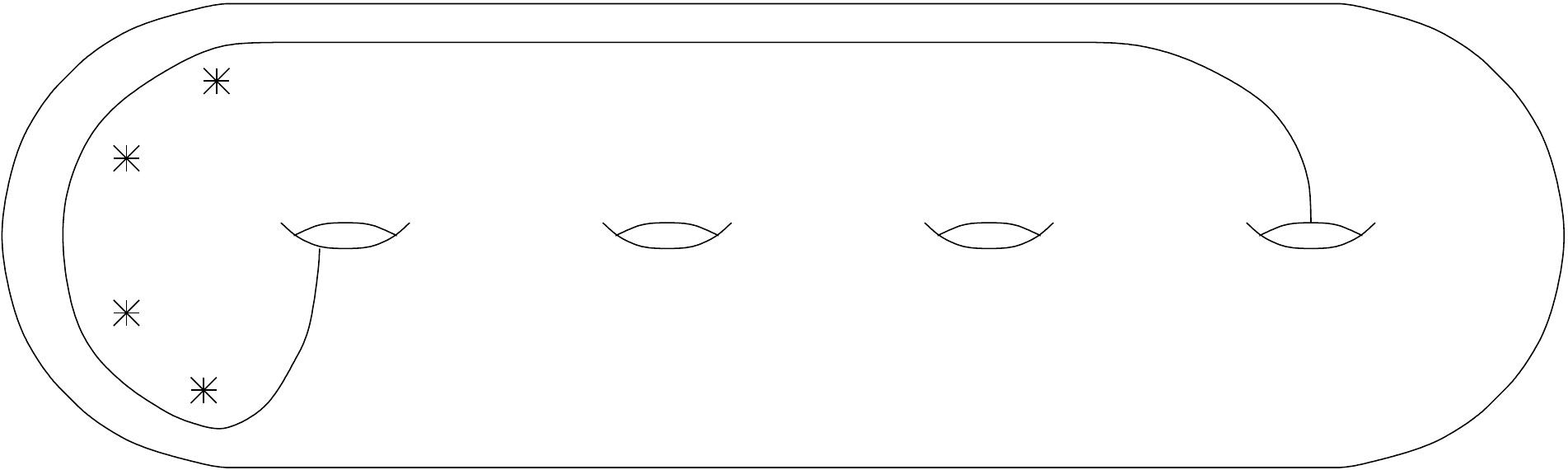_t}} \caption{The curve $\beta_{\{2g,2g+1,0\}}^{4,+} \ColonEqq \langle (\Cfn \backslash \{\alpha_{1}, \alpha_{2g-1}\}) \cup \{\alpha_{0}^{n}\} \cup \Df \cup \{\beta_{\{2, \ldots, 2g-2\}}^{\pm}\}\rangle$.} \label{DefBeta2g2gplus10}
 \end{center}
\end{figure}\\
\indent Note that $\beta_{\{0,1,2\}}^{j,\pm}, \beta_{\{2g,2g+1,0\}}^{j,\pm} \in \Cf^{4}$ for all $1 \leq j \leq n-1$, and $\beta_{\{0,1,2\}}^{n,\pm}, \beta_{\{2g,2g+1,0\}}^{n, \pm} \in (\Cf \cup \Bfn)^{4}$. Then, we define the set $$\Bf_{T} \ColonEqq \{\beta_{J}^{i, +}, \beta_{J}^{i,-} : J \in \{ \{0,1,2\},\{2g,2g+1,0\}\}, 1 \leq i \leq n\} \subset (\Cf \cup \Bfn)^{4}.$$
\indent The set $\Bf_{0} \cup \Bf_{T}$ and the set $\Bf$ of the previous section are quite similar. However, $\Bf_{0} \cup \Bf_{T}$ is not as symmetric as $\Bf$ and it has a sense of incompleteness, for example that we are not including the boundary components of regular neighbourhoods of chains of odd length whose first curve has odd index. While some of these missing curves are not needed for the proof of Theorem \ref{Thm3}, some particular curves are needed for an idea of ``translation'' as in the previous section.
\begin{Prop}\label{translemaprop2}
 Let $k \in \mathbb{Z}$. Then $[\alpha_{k+1}, \ldots, \alpha_{k+(2g-1)}]^{+} \in (\Cf \cup \Bf_{0})^{4}$ for any choice of $i \in \{0, \ldots, n\}$ (with $\alpha_{0} = \alpha_{0}^{i}$ when necessary).
\end{Prop}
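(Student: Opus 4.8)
The plan is to realize $\gamma \ColonEqq [\alpha_{k+1}, \ldots, \alpha_{k+(2g-1)}]^{+}$ directly as a curve uniquely determined by a set $B$ lying in a sufficiently low expansion of the principal set, so that $\gamma \in (\Cf \cup \Bfn)^{4}$. First I would record the combinatorial picture: the subchain $\{\alpha_{k+1}, \ldots, \alpha_{k+(2g-1)}\}$ is precisely the maximal closed chain $C_{i}$ with the three consecutive curves $\alpha_{k-2}, \alpha_{k-1}, \alpha_{k}$ (indices modulo $2g+2$) deleted, so $\gamma$ is the $+$-boundary of a regular neighbourhood of this length-$(2g-1)$ subchain. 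Since we must land in the fourth expansion, the determining set $B$ has to be exhibited inside $(\Cf \cup \Bfn)^{3}$; I would therefore assemble $B$ only from the chain curves $\alpha_{j} \in \Cf$, smaller even-indexed bounding pairs $\beta_{J}^{\pm} \in \Bfn$, and the auxiliary curves of $\Ef \subset \Cf^{3}$ (together with $\Df \subset \Cf^{2}$), deliberately \emph{not} using the curves of $\Bf_{T}$, which already cost four expansions.

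The core of the argument is the uniqueness check $\gamma = \langle B \rangle$. I would choose $B$ so that cutting $S$ along $\bigcup B$ leaves every complementary region a disk or a once-punctured disk, with the single exception of one essential annulus whose core is $\gamma$; this forces $\gamma$ to be the unique essential curve disjoint from every element of $B$. Concretely, the chain curves pin the neighbourhood on its inside, curves on the $-$ side (chain curves and short bounding pairs of $\Bfn$) fill the complementary side, and the epsilon-curves $\eps{i}{j}$, $\epsp{k}{i}{j}$ are inserted to cap off the punctured regions so that no puncture contributes an extra disjoint curve. The dependence on the choice of $i \in \{0, \ldots, n\}$ is absorbed exactly here: since the definition of $\gamma$ reads $\alpha_{0} = \alpha_{0}^{i}$, the puncture cluster to be capped changes with $i$, and the epsilon-curves are defined for every puncture grouping precisely so that the corresponding capping set sits in $\Cf^{3}$ for each $i$.

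The parity of $k$ is then disposed of as follows. When $k$ is odd the endpoints $\alpha_{k+1}$ and $\alpha_{k+(2g-1)}$ both carry even index, so for the grouping $i = 1$ the curve $\gamma$ is literally an element of $\Bfn$ and there is nothing to prove; every remaining case---namely $k$ odd with $i \neq 1$, and all $k$ even (where the subchain begins at an odd index and so is never of the form defining $\Bfn$)---is produced by the $\langle B \rangle$ construction above. Letting $i$ range over $\{0, \ldots, n\}$ and adapting only the capping epsilon-curves to the grouping $\alpha_{0} = \alpha_{0}^{i}$ then yields the statement for every $i$.

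The main obstacle I anticipate is the uniqueness verification together with the puncture bookkeeping: one must check, uniformly in $k$ and $i$, that the chosen $B$ really fills $S$ down to a single annulus around $\gamma$ and leaves no stray essential curve disjoint from $B$ hiding in the capped punctured regions. Arranging the epsilon-curves to cap exactly the right punctures for each $i$---and confirming that this capping set genuinely lies in $\Cf^{3}$ rather than in a higher expansion---is the delicate point on which the bound $4$ (rather than something larger) ultimately depends.
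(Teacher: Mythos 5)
Your proposal is correct and follows essentially the same route as the paper's proof: there, too, the curve is exhibited case by case (split exactly along the parity of $k$ and the choice of $i$, with the $k$ odd, $i=1$ case being literally an element of $\Bfn$) as the curve uniquely determined by an explicit set built from the subchain, one additional chain curve, and the capping curves of $\Df$, $E^{i,i}$ and $\epsp{l}{1}{n-1} \in \Ef \subset \Cf^{3}$, never touching $\Bf_{T}$, so that the determining set sits in $(\Cf \cup \Bfn)^{3}$. The only wrinkle your outline glosses over is the degenerate case $k$ odd, $i=0$, where the $+$-boundary is isotopic to the middle omitted curve $\alpha_{k-1}$ itself (so it already lies in $\Cf$ and the annulus construction is unnecessary); the paper records this explicitly, together with the convention $\Df = \Ef = \varnothing$ when $n=1$.
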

\begin{proof} Suppose $n \geq 2$. If $k = 0$, $[\alpha_{1}, \ldots, \alpha_{2g-1}]^{+} = \langle (\Cf_{0} \backslash \{\alpha_{2g}\}) \cup \Df\rangle$. We split the rest of the proof into two cases according to the parity of $k$.\\
\indent If $k \neq 0$ is even, then $k = 2(l+2)$ for $l \in \{-1, \ldots, g-2\}$, and so $[\alpha_{k+1}, \ldots, \alpha_{k+(2g-1)}]^{+} = [\alpha_{2l+5}, \ldots, \alpha_{2l+1}]^{+}$ (recall the subindices are modulo $2g+2$). Thus, if $i \in \{1, \ldots, n-1\}$ we get, $$[\alpha_{2l+5}, \ldots, \alpha_{2l+1}]^{+} = \langle \{\alpha_{2l+5}, \ldots, \alpha_{2l+1}\} \cup \{\alpha_{2l+3}\} \cup E^{i,i} \cup \{\epsp{l+1}{1}{n-1}, \epsp{l+2}{1}{n-1}\}\rangle \in (\Cf \cup \Bf_{0})^{4},$$ see Figure \ref{Translemaprop2fig1}; if $i \in \{0,n\}$ we get, $$[\alpha_{2l+5}, \ldots, \alpha_{2l+1}]^{+} = \langle \{\alpha_{2l+5}, \ldots, \alpha_{2l+1}\} \cup \{\alpha_{2l+3}\} \cup \Df\rangle \in (\Cf \cup \Bf_{0})^{3}.$$
\begin{figure}[h]
 \begin{center}
  \resizebox{10cm}{!}{\input{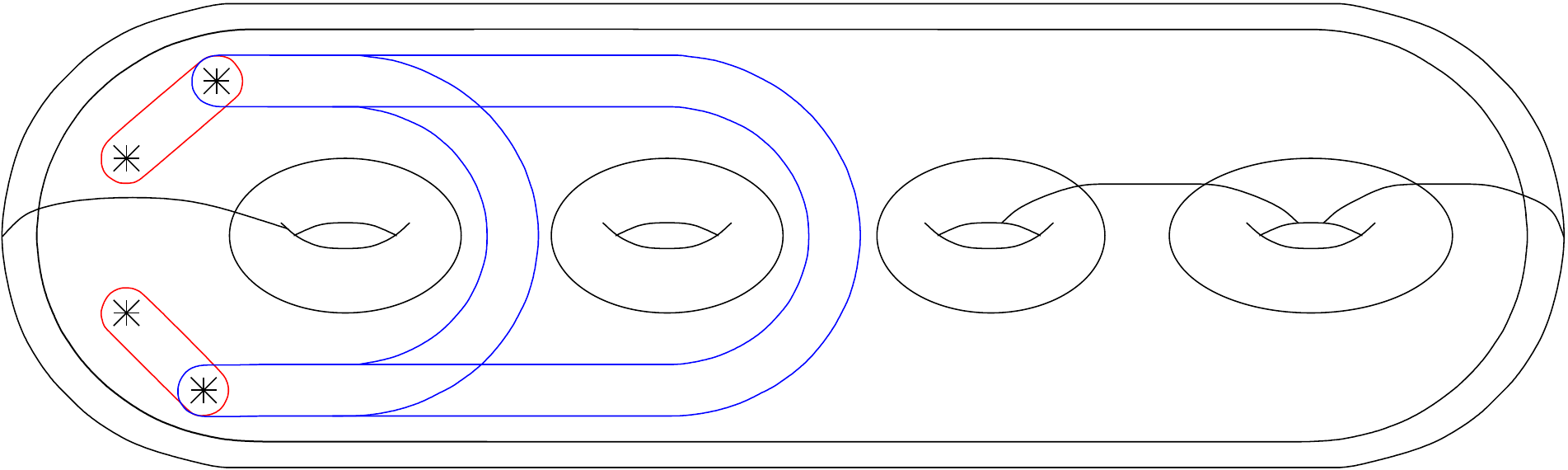_t}} \\[0.3cm]
  \resizebox{10cm}{!}{\input{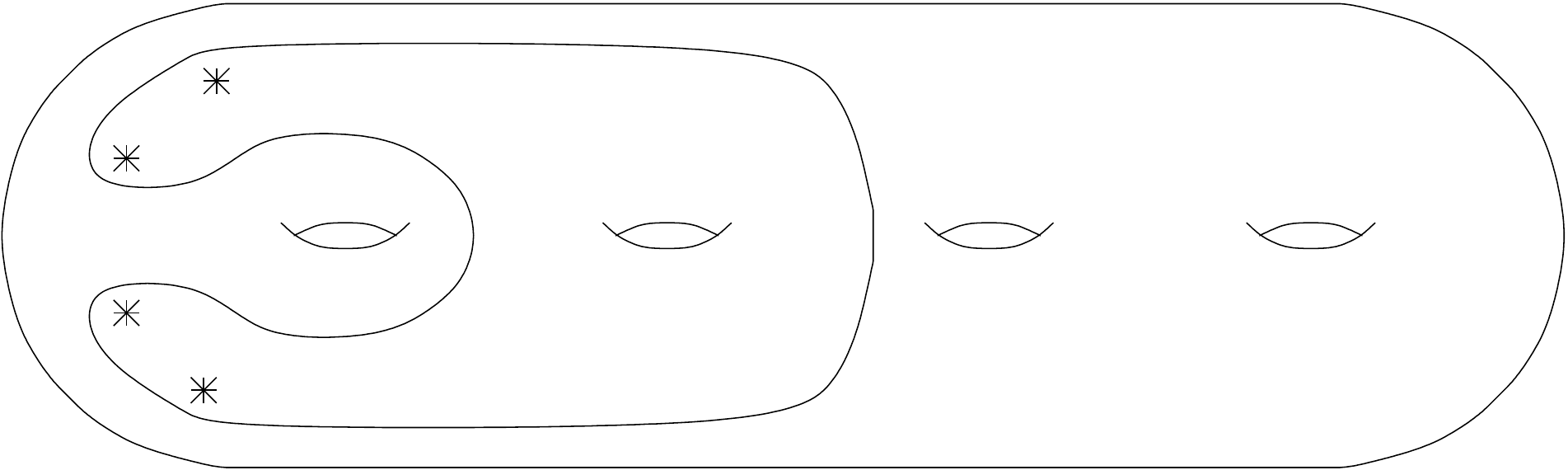_t}} \caption{$[\alpha_{2l+5}, \ldots, \alpha_{2l+1}]^{+} = \left\langle \{\alpha_{2l+5}, \ldots, \alpha_{2l+1}\} \cup \{\alpha_{2l+3}\} \cup \right.$ \textcolor{red}{$E^{i,i}$} $\cup$ \textcolor{blue}{$\{\epsp{l+1}{1}{n-1}, \epsp{l+2}{1}{n-1}\}$}$\left. \right\rangle$ for $l = 0$ and $i = 2$, in $S_{4,4}$.} \label{Translemaprop2fig1}
 \end{center}
\end{figure}\\
\indent If $k$ is odd, then $k = 2l- 1$ for some $0 \leq l \leq g$, and so $[\alpha_{k+1}, \ldots, \alpha_{k+(2g-1)}]^{+} = [\alpha_{2l}, \ldots, \alpha_{2l-4}]^{+}$ (recall the subindices are modulo $2g+2$). Thus, if $i = 0$ we have that $[\alpha_{2l}, \ldots, \alpha_{2l-4}]^{+} = \alpha_{2l-2};$ if $i = 1$ we have that $[\alpha_{2l}, \ldots, \alpha_{2l-4}]^{+} \in \Bf_{0}$; if $i \in \{2, \ldots, n-1\}$ we have (see Figure \ref{Translemaprop2fig3}): $$[\alpha_{2l}, \ldots, \alpha_{2l-4}]^{+} = \langle \{\alpha_{2l}, \ldots, \alpha_{2l-4}\} \cup \{\alpha_{2l-2}\} \cup \{\alpha_{0}^{j}: j > i\} \cup E^{i,i} \cup \{\epsp{l-1}{1}{n-1}\}\rangle \in (\Cf \cup \Bf_{0})^{4};$$ and finally, if $i = n$ we have $$[\alpha_{2l}, \ldots, \alpha_{2l-4}]^{+} =\langle \{\alpha_{2l}, \ldots, \alpha_{2l-4}\} \cup \{\alpha_{2l-2}\} \cup \Df\rangle \in (\Cf \cup \Bf_{0})^{3}.$$
\begin{figure}[h]
 \begin{center}
  \resizebox{10cm}{!}{\input{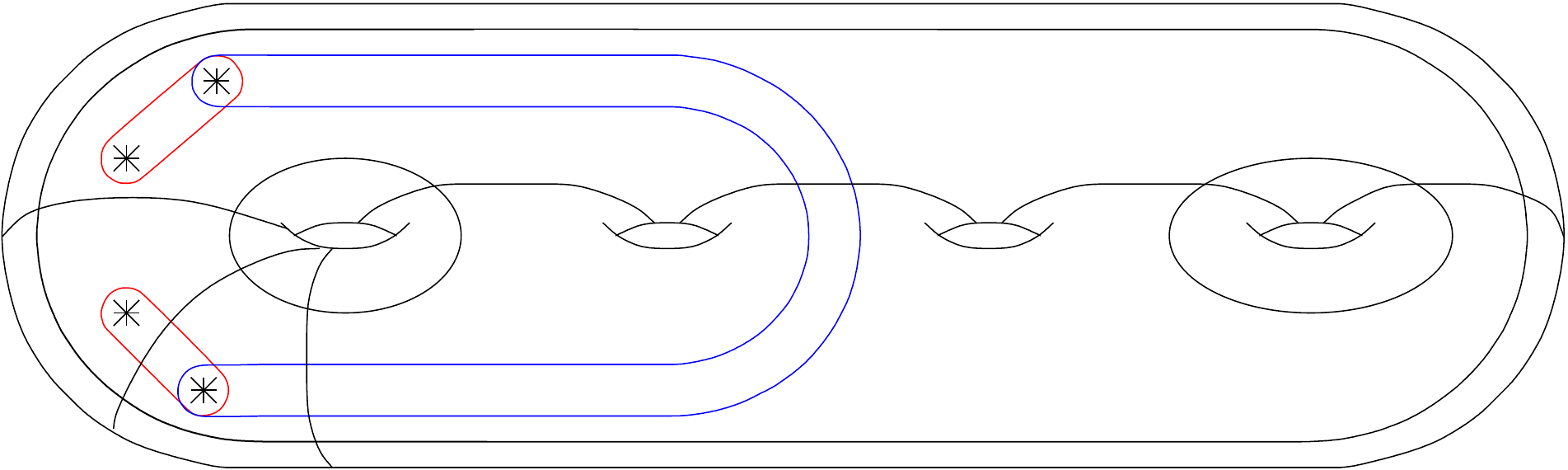_t}} \\[0.3cm]
  \resizebox{10cm}{!}{\input{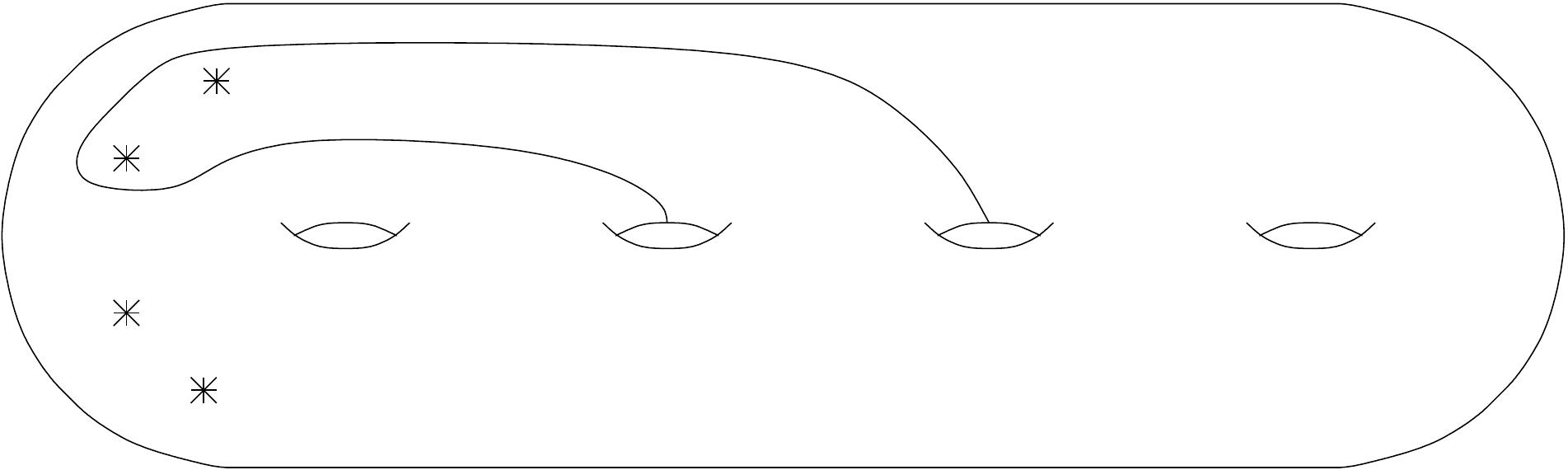_t}} \caption{$[\alpha_{2l}, \ldots, \alpha_{2l-4}]^{+} = \langle \{\alpha_{2l}, \ldots, \alpha_{2l-4}\} \cup \{\alpha_{2l-2}\} \cup \{\alpha_{0}^{j}: j > i\} \cup$ \textcolor{red}{$E^{i,i}$} $\cup$ \textcolor{blue}{$\{\epsp{l-1}{1}{n-1}\}$}$\rangle$ for $l = 3$ and $i = 2$ in $S_{4,4}$.} \label{Translemaprop2fig3}
 \end{center}
\end{figure}
\indent Now, if $n = 1$, we can uniquely determine the curves $[\alpha_{k+1}, \ldots, \alpha_{k+(2g-1)}]^{+}$ in the same way as above, recalling that in this instance $\Df = \Ef = \varnothing$ and taking the cases $i = 0$ and $i = n$.\\
\indent Therefore, for $i \in \{0, \ldots, n\}$, $k \in \mathbb{Z}$, $[\alpha_{k+1}, \ldots, \alpha_{k+(2g-1)}]^{+} \in (\Cf \cup \Bf_{0})^{4}$.
\end{proof}
\indent Note that for $n = 1$, the curves $[\alpha_{k + 1}, \ldots, \alpha_{k + (2g+1)}]^{+}$ are elements of $(\Cf \cup \Bf_{0})^{1}$ for any $k \in \mathbb{Z}$ and any choice of $i \in \{0,1\}$.\\
\indent Now we have the following proposition.
\begin{Prop}\label{translemaprop1}
 Let $k \in \mathbb{Z}$. Then $[\alpha_{k}, \alpha_{k+1}, \alpha_{k+2}]^{\pm} \in (\Cf \cup \Bf_{0})^{6}$ for any choice of $i \in \{0, \ldots, n\}$ (with $\alpha_{0} = \alpha_{0}^{i}$ when necessary).
\end{Prop}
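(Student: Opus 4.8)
The proof splits according to the parity of $k$, mirroring the case analysis of Proposition \ref{translemaprop2}. First I reduce to the odd case. If $k$ is even and $0 \notin \{k, k+1, k+2\}$ (modulo $2g+2$), then $\{k, k+1, k+2\}$ is a subchain of odd length beginning at an even index, so $[\alpha_{k}, \alpha_{k+1}, \alpha_{k+2}]^{\pm} = \beta_{\{k,k+1,k+2\}}^{\pm} \in \Bfn$ by the very definition of $\Bfn$; and if $k$ is even with $0 \in \{k, k+1, k+2\}$, the subchain is $\{0,1,2\}$ or $\{2g,2g+1,0\}$ (with $\alpha_{0} = \alpha_{0}^{i}$), so the curve is one of $\beta_{\{0,1,2\}}^{i,\pm}, \beta_{\{2g,2g+1,0\}}^{i,\pm} \in \Bf_{T} \subset (\Cf \cup \Bfn)^{4}$. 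In either even subcase the conclusion already holds, so from now on I assume $k$ is odd.

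For odd $k$, the curve $[\alpha_{k}, \alpha_{k+1}, \alpha_{k+2}]^{\pm}$ is exactly one of the bounding curves of an odd-length subchain with \emph{odd} first index that was deliberately left out of $\Bfn$, so it cannot be read off directly; the plan is to exhibit it as $\langle B\rangle$ for an explicit set $B \subset (\Cf \cup \Bfn)^{5}$, which then places it in $(\Cf \cup \Bfn)^{6}$. Geometrically, $[\alpha_{k}, \alpha_{k+1}, \alpha_{k+2}]^{+}$ is a boundary curve of the genus-one subsurface $N$ carrying the $3$-chain $\{\alpha_{k}, \alpha_{k+1}, \alpha_{k+2}\}$; to force it as the unique curve disjoint from $B$ I fill $S$ cut along it. The part of $B$ inside $N$ consists of the chain curves $\alpha_{k}, \alpha_{k+1}, \alpha_{k+2}$, and the $\pm$ ambiguity between the two boundary components of $N$ is resolved by including the appropriate one of the neighbouring chain curves $\alpha_{k-1}, \alpha_{k+3}$, namely the one disjoint from $[\alpha_{k}, \alpha_{k+1}, \alpha_{k+2}]^{+}$ but meeting $[\alpha_{k}, \alpha_{k+1}, \alpha_{k+2}]^{-}$, so as to exclude the opposite boundary as a competitor. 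The remaining, punctured side of $S \setminus N$ is filled using the complementary long subchain bounding curve $[\alpha_{k+3}, \ldots, \alpha_{k-1}]^{+} \in (\Cf \cup \Bfn)^{4}$ furnished by Proposition \ref{translemaprop2}, together with the auxiliary curves of $\Ef \subset \Cf^{3}$ and, to absorb the choice of $i$ and the single odd subchain $\{2g+1,0,1\}$ meeting the index $0$, the curves of $\Bf_{T} \subset (\Cf \cup \Bfn)^{4}$. As in Proposition \ref{translemaprop2}, I would organise the verification into subcases according to $i \in \{0, \ldots, n\}$ and to whether the subchain meets $0$, in each subcase writing down the explicit set $B$ (and, where convenient, an intermediate bounding curve first determined at level $5$ from the level-$4$ blocks above) and checking by inspection that every element lies in the asserted expansion; the treatment of $[\alpha_{k}, \alpha_{k+1}, \alpha_{k+2}]^{-}$ is symmetric, using the other neighbouring chain curve.

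The main obstacle is the uniqueness verification in the punctured setting. In the closed case $[\alpha_{k}, \alpha_{k+1}, \alpha_{k+2}]^{\pm}$ simply coincides with the complementary subchain bounding curve $[\alpha_{k+3}, \ldots, \alpha_{k-1}]^{\mp}$, so Proposition \ref{translemaprop2} would close the argument at once; the punctures destroy this coincidence, since the complementary region now carries the $n$ punctures organised by $\Cff$ and its combinatorics genuinely depend on the chosen $\alpha_{0}^{i}$. Consequently, for each subcase I must confirm that the selected $B$ truly fills $S$ cut along $[\alpha_{k}, \alpha_{k+1}, \alpha_{k+2}]^{\pm}$, ruling out both the opposite boundary curve and every boundary- or puncture-parallel competitor; this is precisely where the families $\Ef$ and $\Bf_{T}$, engineered to ``fill through'' the punctures, and the long subchains of Proposition \ref{translemaprop2} are needed. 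Taking the maximum of the expansion levels used across the finitely many subcases yields $[\alpha_{k}, \alpha_{k+1}, \alpha_{k+2}]^{\pm} \in (\Cf \cup \Bfn)^{6}$ for every $k \in \mathbb{Z}$ and every choice of $i \in \{0, \ldots, n\}$.
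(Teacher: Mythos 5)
The serious problem is the disambiguation mechanism in your odd case. You propose to separate $[\alpha_{k},\alpha_{k+1},\alpha_{k+2}]^{+}$ from $[\alpha_{k},\alpha_{k+1},\alpha_{k+2}]^{-}$ by putting into $B$ ``the neighbouring chain curve disjoint from $[\alpha_{k},\alpha_{k+1},\alpha_{k+2}]^{+}$ but meeting $[\alpha_{k},\alpha_{k+1},\alpha_{k+2}]^{-}$''. No such curve exists: \emph{each} of $\alpha_{k-1},\alpha_{k+3}$ meets \emph{each} curve of the bounding pair exactly once. Indeed, mod $2$ each bounding curve is homologous to $[\alpha_{k}]+[\alpha_{k+2}]$ (it is nonseparating and has zero mod-$2$ intersection with $\alpha_{k},\alpha_{k+1},\alpha_{k+2}$), so $i([\alpha_{k},\alpha_{k+1},\alpha_{k+2}]^{\pm},\alpha_{k+3}) \equiv i(\alpha_{k+2},\alpha_{k+3}) \equiv 1 \pmod 2$, and symmetrically for $\alpha_{k-1}$; this is also visible in Section \ref{chap1}, where bounding-pair curves are used as links of closed chains adjacent to \emph{both} of their neighbours. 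Consequently a set $B$ containing either neighbour can never satisfy $[\alpha_{k},\alpha_{k+1},\alpha_{k+2}]^{\pm} = \langle B\rangle$ --- the target is not even disjoint from $B$ --- which is precisely why every determining set in the paper's proof excludes both neighbours, having the form $(\Cf \setminus \{\alpha_{k-1},\alpha_{k+3}\}) \cup (\cdots)$. The paper disambiguates by a different idea: first the $-$ curve (on the unpunctured side) is cut out by $(\Cf \setminus \{\alpha_{k-1},\alpha_{k+3}\}) \cup \Ef$, the point being that some member of $\Ef$ must meet the $+$ curve (otherwise this set could not determine a unique curve, since both bounding curves are disjoint from $\Cf \setminus \{\alpha_{k-1},\alpha_{k+3}\}$); then the $+$ curve is determined by a set that contains the $-$ curve itself, which eliminates the $-$ curve as a competitor because the paper's disjointness convention requires $\alpha \neq \beta$. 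Your proposal is missing this idea, and without it the odd case collapses. Note also that the bound $6$ (rather than $5$) is forced by a specific subcase you do not isolate: for the chain $\{\alpha_{2g+1},\alpha_{0}^{i},\alpha_{1}\}$ with $i \in \{0,n\}$ the paper must first build the auxiliary curve $[\alpha_{3},\ldots,\alpha_{2g-1}]^{+}$ at level $5$ and only then determine $[\alpha_{2g+1},\alpha_{0}^{i},\alpha_{1}]^{+}$ at level $6$.

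Your even case also has a gap, at the choice $i=0$. The set $\Bf_{T}$ is defined only with superscripts $1 \leq i \leq n$, so for $i = 0$ the curves $[\alpha_{2g},\alpha_{2g+1},\alpha_{0}^{0}]^{-}$ and $[\alpha_{0}^{0},\alpha_{1},\alpha_{2}]^{-}$ lie in neither $\Bf_{0}$ nor $\Bf_{T}$; only their $+$ partners happen to coincide with $\beta_{\{2,\ldots,2g-2\}}^{+}, \beta_{\{4,\ldots,2g\}}^{+} \in \Bf_{0}$. The paper's Part 1 exists exactly to handle these two exceptions, exhibiting for instance $[\alpha_{2g},\alpha_{2g+1},\alpha_{0}^{0}]^{-} = \langle (C_{0}\setminus\{\alpha_{1},\alpha_{2g-1}\}) \cup \Df \cup \{\beta_{\{2,\ldots,2g-2\}}^{\pm}\}\rangle$, so your blanket claim that all even-index cases land in $\Bf_{0} \cup \Bf_{T}$ is false as stated and needs the same kind of explicit construction.
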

\begin{proof} 
 We start by proving that for $k \in \mathbb{Z}$ even, $[\alpha_{k}, \alpha_{k+1}, \alpha_{k+2}]^{\pm} \in (\Cf \cup \Bf_{0})^{4}$ (part 1); afterwards, we prove that for $k$ odd, $[\alpha_{k}, \alpha_{k+1}, \alpha_{k+2}]^{-} \in (\Cf \cup \Bf_{0})^{4}$ (part 2); finally we prove that for $k$ odd, $[\alpha_{k}, \alpha_{k+1}, \alpha_{k+2}]^{+} \in (\Cf \cup \Bf_{0})^{6}$ (part 3).\\
\textbf{Part 1:} If $k$ is even, $[\alpha_{k}, \alpha_{k+1}, \alpha_{k+2}]^{\pm} \in (\Cf \cup \Bf_{0})^{4}$ with the exception of $[\alpha_{2g}, \alpha_{2g+1}, \alpha_{0}^{0}]^{-}$, and $[\alpha_{0}^{0},\alpha_{1},\alpha_{2}]^{-}$ (we can verify that $[\alpha_{2g}, \alpha_{2g+1}, \alpha_{0}^{0}]^{+} = \beta_{\{2, \ldots, 2g-2\}}^{+}$ and $[\alpha_{0}^{0},\alpha_{1},\alpha_{2}]^{+} = \beta_{\{4, \ldots, 2g\}}^{+}$). This happens since $[\alpha_{k}, \alpha_{k+1}, \alpha_{k+2}]^{\pm}$ is an element of $\Bf_{0}$ or $\Bf_{T}$, for $k$ even with the aforementioned exceptions. So, for the first exception we have the following (see Figure \ref{Translemaprop1fig1}):$$[\alpha_{2g}, \alpha_{2g+1}, \alpha_{0}^{0}]^{-} = \langle (C_{0} \backslash \{\alpha_{1}, \alpha_{2g-1}\}) \cup \Df \cup \{\beta_{\{2, \ldots, 2g-2\}}^{\pm}\}\rangle.$$
\begin{figure}[h]
 \begin{center}
  \resizebox{9cm}{!}{\input{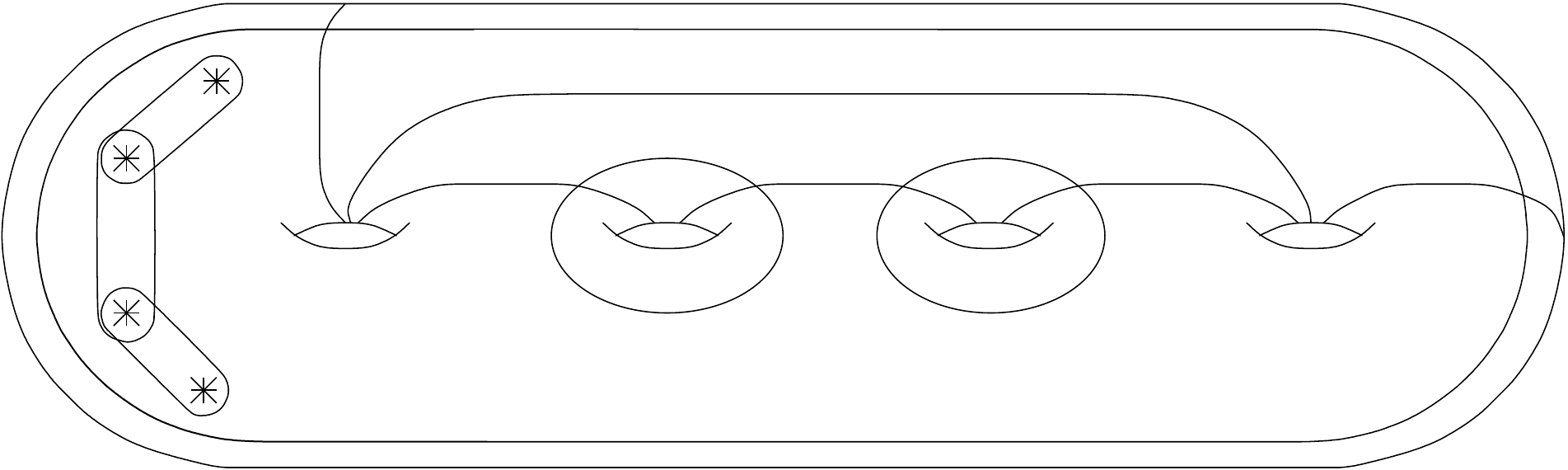_t}} \\[0.2cm]
  \resizebox{9cm}{!}{\input{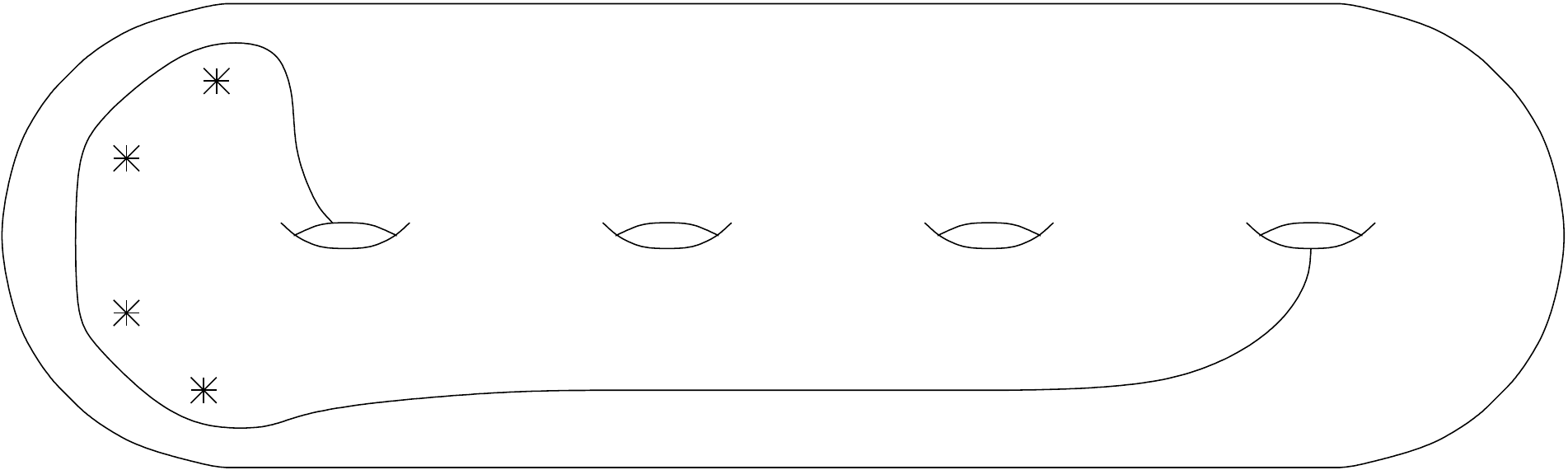_t}} \caption{An illustration of $[\alpha_{2g}, \alpha_{2g+1}, \alpha_{0}^{0}]^{-} = \langle (C_{0} \backslash \{\alpha_{1}, \alpha_{2g-1}\}) \cup \Df \cup \{\beta_{\{2, \ldots, 2g-2\}}^{\pm}\}\rangle$.} \label{Translemaprop1fig1}
 \end{center}
\end{figure}\\
And for the second exception we have (see Figure \ref{Translemaprop1fig3}), $$[\alpha_{0}^{0},\alpha_{1},\alpha_{2}]^{-} = \langle (C_{0} \backslash \{\alpha_{3},\alpha_{2g+1}\}) \cup \Df \cup \{\beta_{\{4,\ldots, 2g\}}^{\pm}\}\rangle.$$
\begin{figure}[h]
 \begin{center}
  \resizebox{9cm}{!}{\input{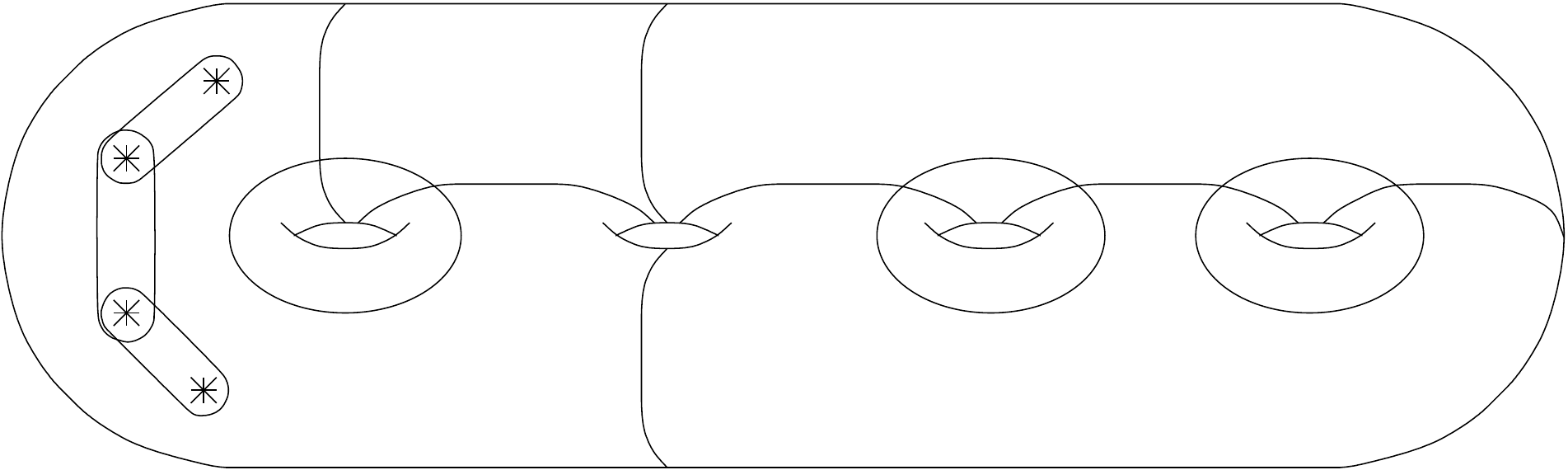_t}} \\[0.2cm]
  \resizebox{9cm}{!}{\input{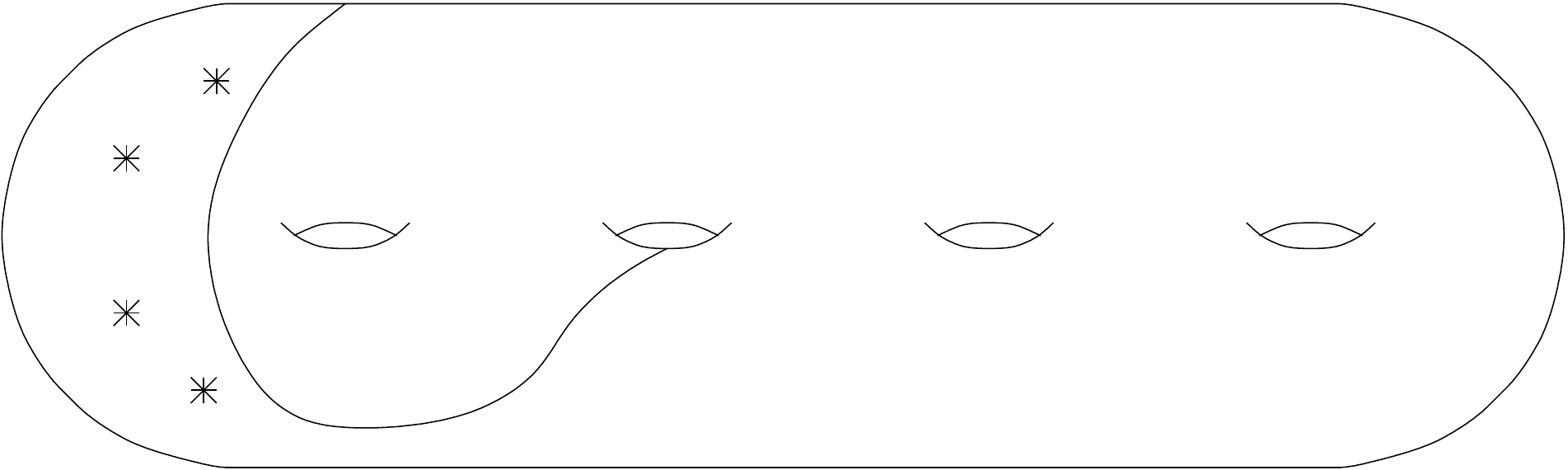_t}} \caption{An illustration of $[\alpha_{0}^{0},\alpha_{1},\alpha_{2}]^{-} = \langle (C_{0} \backslash \{\alpha_{3},\alpha_{2g+1}\}) \cup \Df \cup \{\beta_{\{4,\ldots, 2g\}}^{\pm}\}\rangle$.} \label{Translemaprop1fig3}
 \end{center}
\end{figure}\\
\indent Therefore, for $k$ even (with $\alpha_{0} = \alpha_{0}^{i}$ when necessary), $[\alpha_{k}, \alpha_{k+1}, \alpha_{k+2}]^{\pm} \in (\Cf \cup \Bf_{0})^{4}$.\\
\textbf{Part 2:} Here we prove the case $n \geq 2$, leaving the analogous details of the case $n = 1$ to the reader (see \cite{Thesis}).\\
\indent Let $i \in \{0, \ldots, n\}$. For $k$ odd, we have to prove that each of the curves $[\alpha_{k}, \alpha_{k+1}, \alpha_{k+2}]^{-}$ is in $(\Cf \cup \Bf_{0})^{4}$. Let $k \in \{3, 5, \ldots, 2g+1\} \backslash \{2g-1\}$, then (see Figure \ref{Translemaprop1fig5}): $$[\alpha_{k},\alpha_{k+1},\alpha_{k+2}]^{-} = \langle (\Cf \backslash \{\alpha_{k-1}, \alpha_{k+3}\}) \cup \Ef\rangle.$$
\indent We also have $$[\alpha_{1}, \alpha_{2}, \alpha_{3}]^{-} = \langle (\Cf_{0} \backslash \{\alpha_{4}\}) \cup \Ef\rangle,$$ $$[\alpha_{2g-1}, \alpha_{2g}, \alpha_{2g+1}]^{-} = \langle (\Cf_{0} \backslash \{\alpha_{2g-2}\}) \cup \Ef\rangle.$$
\begin{figure}[h]
 \begin{center}
  \resizebox{10cm}{!}{\input{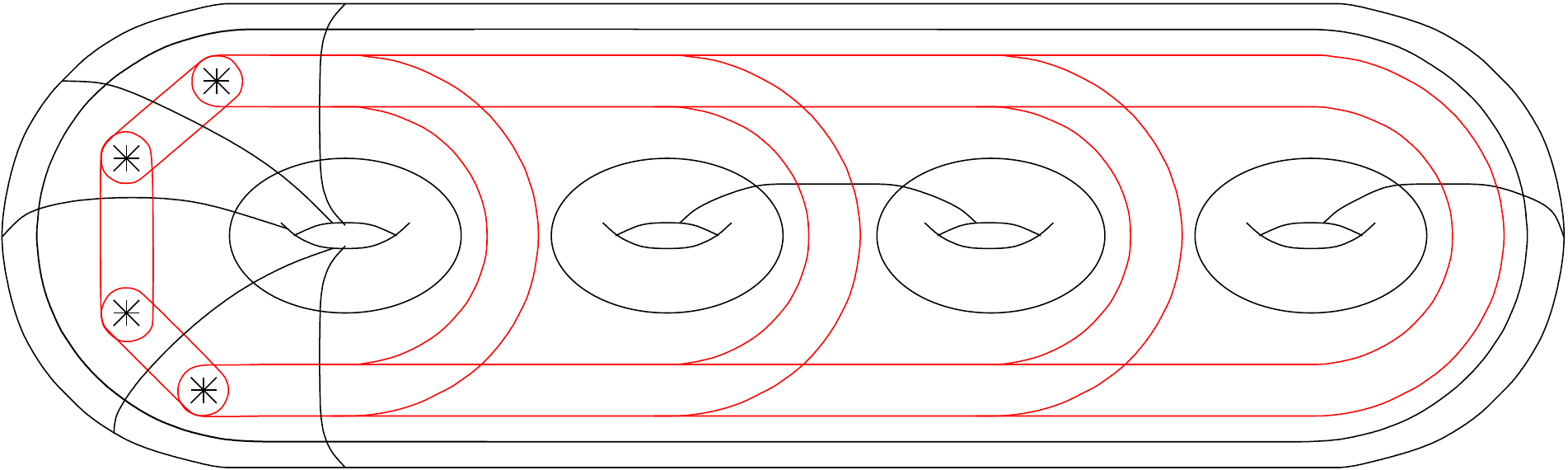_t}} \\[0.3cm]
  \resizebox{10cm}{!}{\input{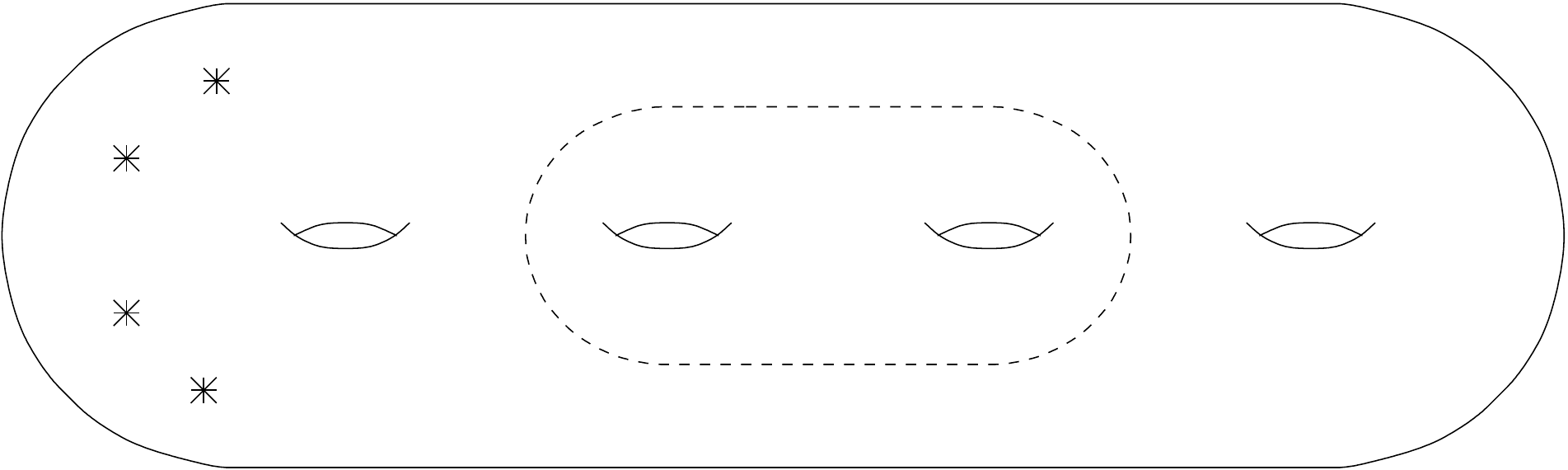_t}} \caption{$[\alpha_{k},\alpha_{k+1},\alpha_{k+2}]^{-} = \langle (\Cf \backslash \{\alpha_{k-1}, \alpha_{k+3}\}) \cup$ \textcolor{red}{$\Ef$} $\rangle$ for $k = 3$ in $S_{4,4}$.}\label{Translemaprop1fig5}
 \end{center}
\end{figure}\\
\indent Therefore, for $k$ odd (with $\alpha_{0} = \alpha_{0}^{i}$ when necessary), $[\alpha_{k}, \alpha_{k+1}, \alpha_{k+2}]^{-}) \in (\Cf \cup \Bf_{0})^{4}$.\\
\textbf{Part 3:} As above, we only prove the case $n \geq 2$; for the details of the case $n = 1$ see \cite{Thesis}.\\
\indent Let $i \in \{0, \ldots, n\}$, $k$ be odd. Similarly to the previous part, we have to prove that $[\alpha_{k}, \alpha_{k+1}, \alpha_{k+2}]^{+} \in (\Cf \cup \Bf_{0})^{6}$. Let $k \in \{3, 5, \ldots, 2g-3\}$. Thus (see Figure \ref{Translemaprop1fig7}), $$[\alpha_{k},\alpha_{k+1},\alpha_{k+2}]^{+} = \langle (\Cf \backslash \{\alpha_{k-1},\alpha_{k+3}\}) \cup \{[\alpha_{k},\alpha_{k+1},\alpha_{k+2}]^{-}\}\rangle.$$
\begin{figure}[h]
 \begin{center}
  \resizebox{8cm}{!}{\input{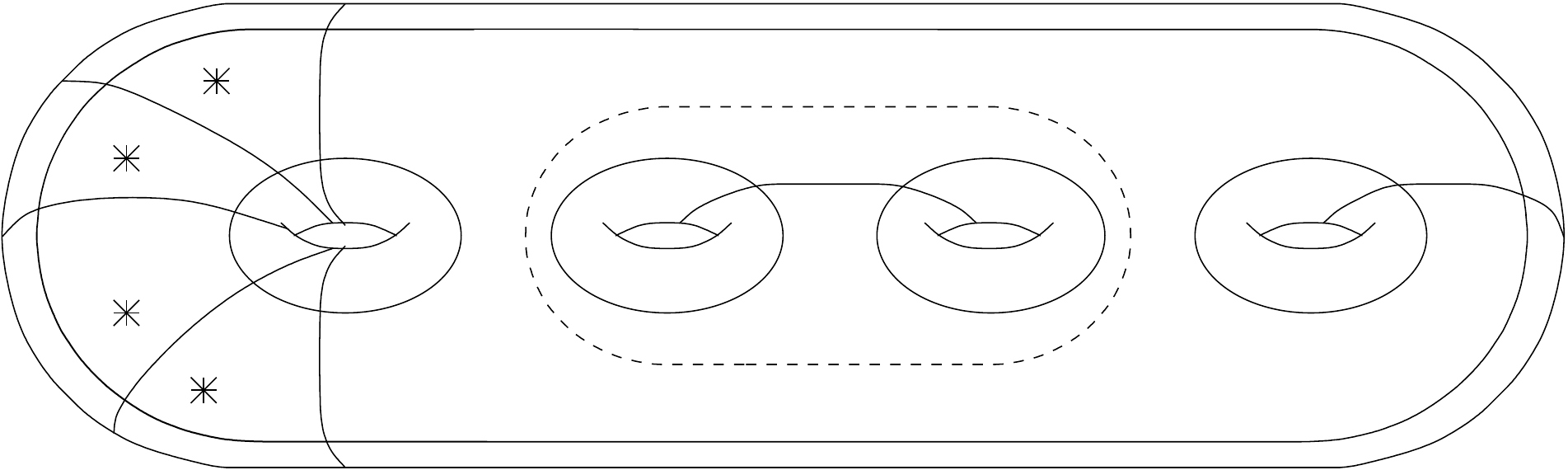_t}} \\[0.2cm]
  \resizebox{8cm}{!}{\input{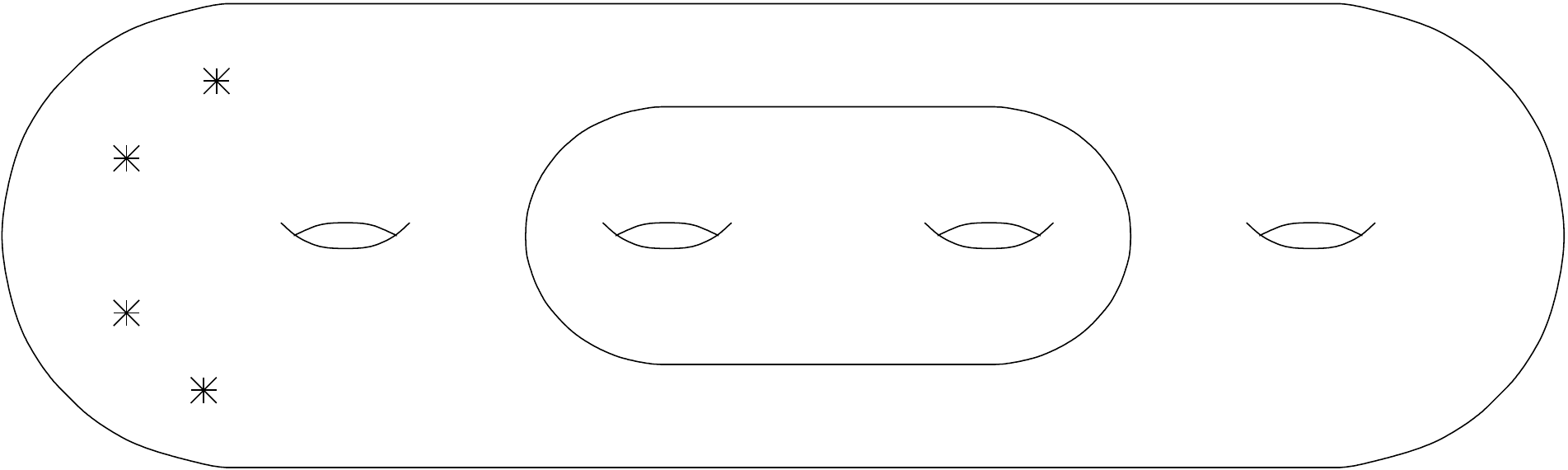_t}} \caption{$[\alpha_{k},\alpha_{k+1},\alpha_{k+2}]^{+} = \langle (\Cf \backslash \{\alpha_{k-1},\alpha_{k+3}\}) \cup \{[\alpha_{k},\alpha_{k+1},\alpha_{k+2}]^{-}\}\rangle$ for $k = 3$ in $S_{4,4}$.}\label{Translemaprop1fig7}
 \end{center}
\end{figure}\\
 We then have (see Figure \ref{Translemaprop1fig9}), $$[\alpha_{2g-1}, \alpha_{2g}, \alpha_{2g+1}]^{+} = \left\langle (\Cf_{0} \backslash \{\alpha_{2g-2}\}) \cup \Df \cup \left(\bigcup_{l \in \{1, \ldots, g-1\}} \epsp{l}{1}{n-1}\right) \cup \{[\alpha_{2g-1},\alpha_{2g},\alpha_{2g+1}]^{-}\}\right\rangle,$$ $$[\alpha_{1}, \alpha_{2}, \alpha_{3}]^{+} = \left\langle (\Cf_{0} \backslash \{\alpha_{4}\}) \cup \Df \cup \left( \bigcup_{l \in \{2, \ldots, g\}} \{\epsp{l}{1}{n-1}\} \right) \cup \{[\alpha_{1}, \alpha_{2}, \alpha_{3}]^{-}\}\right\rangle.$$
\begin{figure}[h]
 \begin{center}
  \resizebox{8cm}{!}{\input{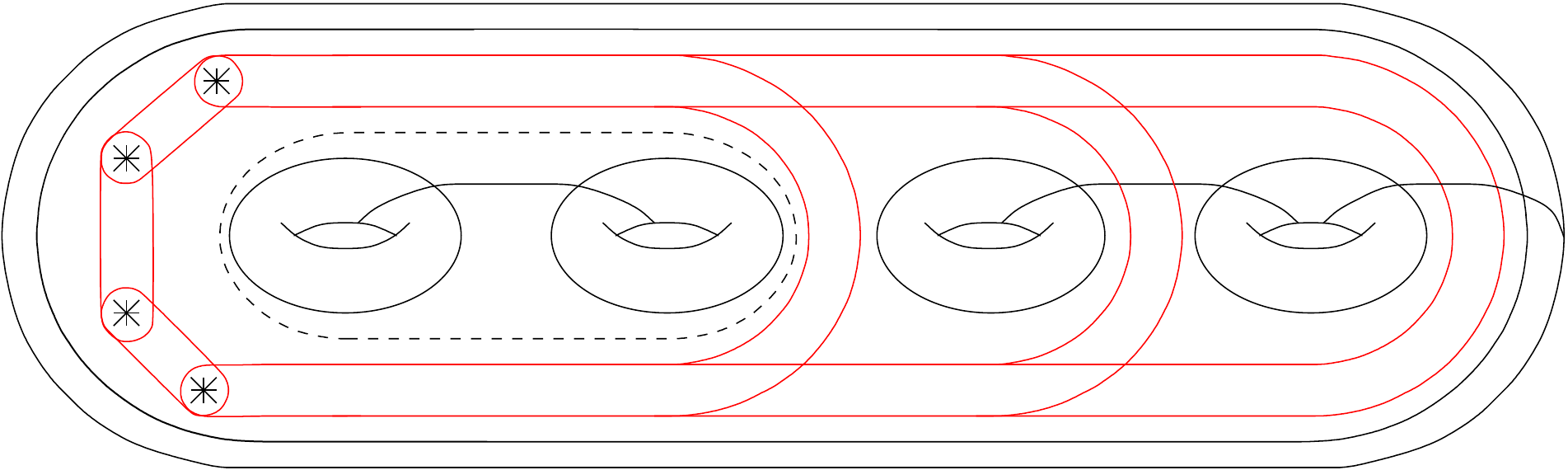_t}} \\[0.2cm]
  \resizebox{8cm}{!}{\input{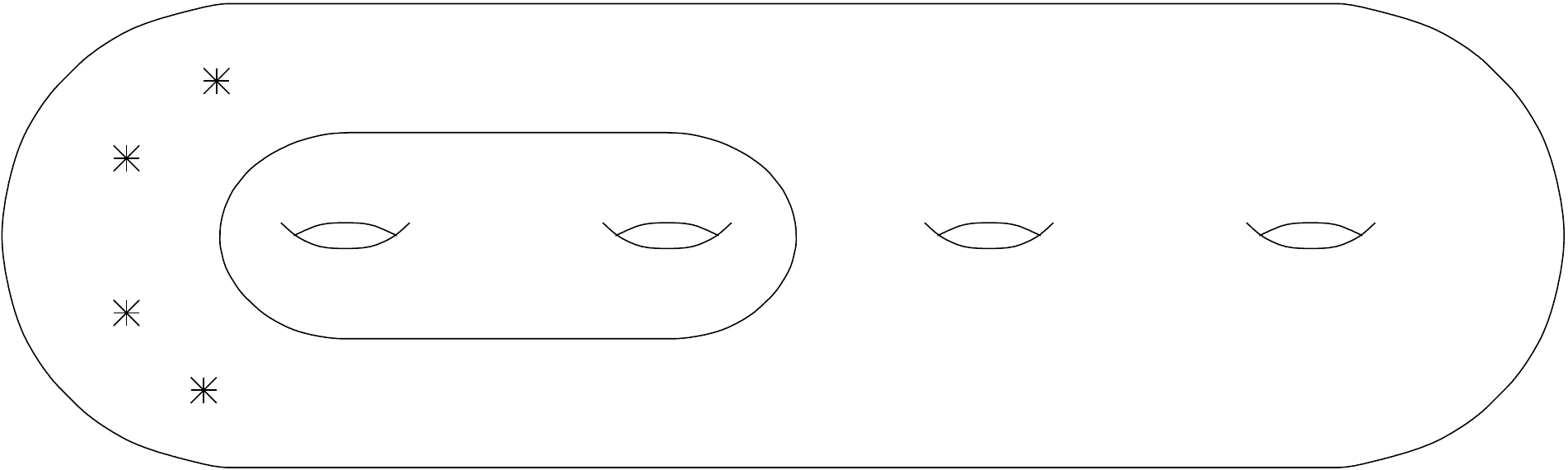_t}} \caption{$[\alpha_{1}, \alpha_{2}, \alpha_{3}]^{+} = \left\langle (\Cf_{0} \backslash \{\alpha_{4}\}) \cup \Df \cup \left( \bigcup_{l \in \{2, \ldots, g\}} \{\epsp{l}{1}{n-1}\} \right) \cup \{[\alpha_{1}, \alpha_{2}, \alpha_{3}]^{-}\}\right\rangle$ in $S_{4,4}$.} \label{Translemaprop1fig9}
 \end{center}
\end{figure}\\
 For $i \in \{1, \ldots, n-1\}$, we get (see Figure \ref{DefAlpha2gplus1Alpha0iAlpha1plusfig1}), $$[\alpha_{2g+1}, \alpha_{0}^{i}, \alpha_{1}]^{+} = \left\langle (C_{i} \backslash \{\alpha_{2}, \alpha_{2g}\}) \cup E^{i,i} \cup \left(\bigcup_{l \in \{1, \ldots, g\}} \{\epsp{l}{1}{n-1}\} \right) \cup \{[\alpha_{2g+1}, \alpha_{0}, \alpha_{1}]^{-}\}\right\rangle;$$ 
\begin{figure}[h]
 \begin{center}
  \resizebox{8cm}{!}{\input{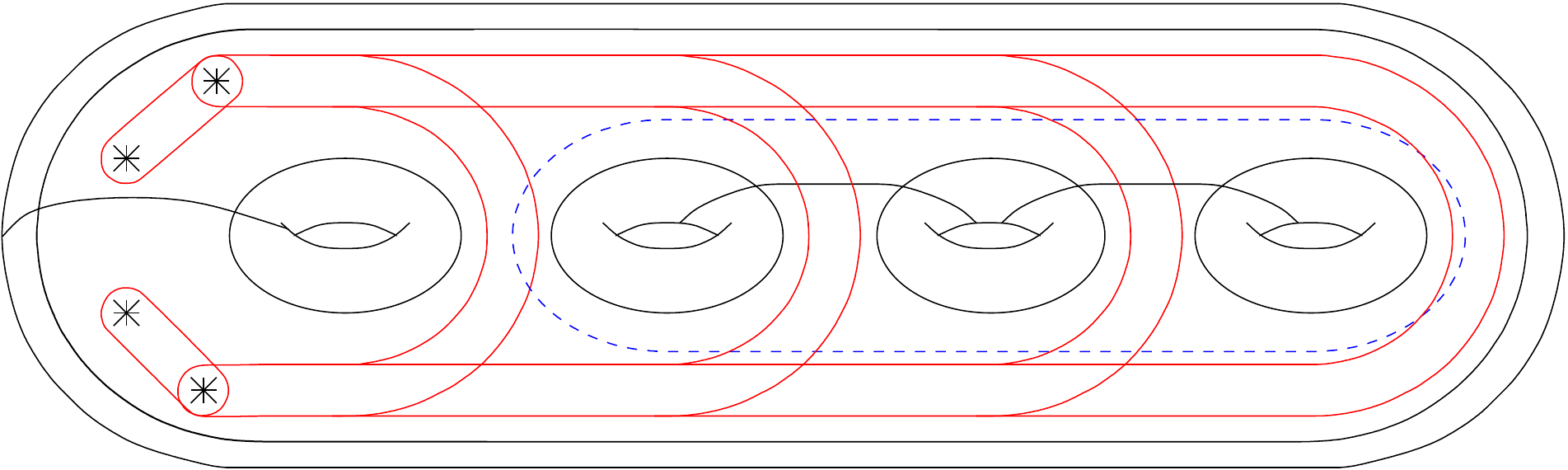_t}} \\[0.2cm]
  \resizebox{8cm}{!}{\input{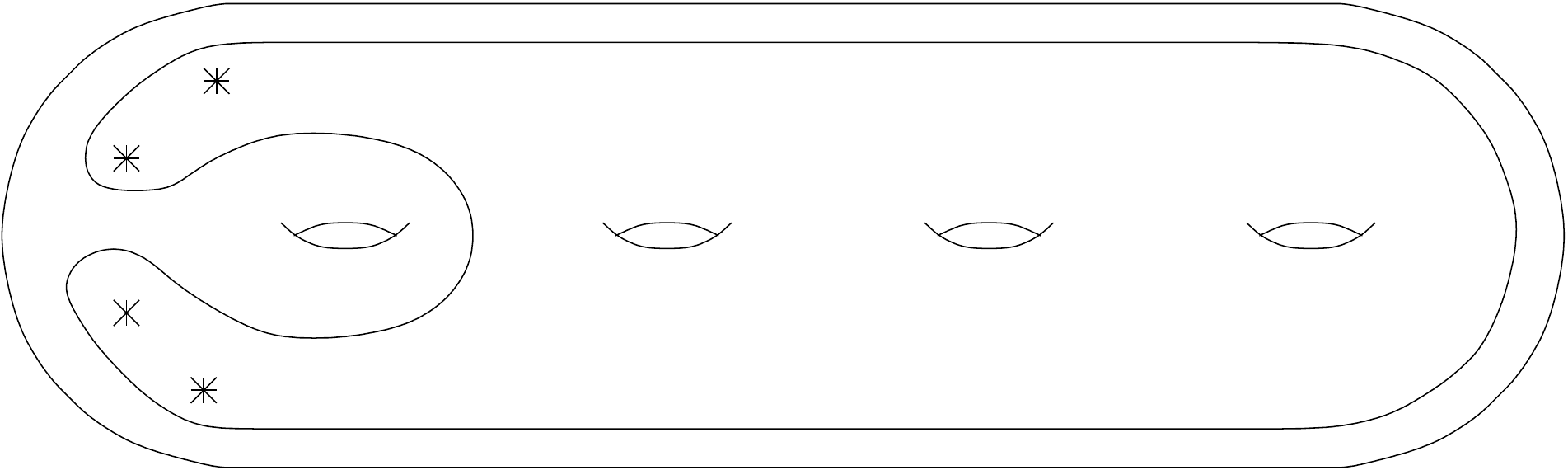_t}} \caption{The curve $[\alpha_{2g+1}, \alpha_{0}^{2}, \alpha_{1}]^{+}$ $= \left\langle (C_{i} \backslash \{\alpha_{2}, \alpha_{2g}\}) \cup E^{i,i} \cup \left(\bigcup_{l \in \{1, \ldots, g\}} \{\epsp{l}{1}{n-1}\} \right) \cup \{[\alpha_{2g+1}, \alpha_{0}, \alpha_{1}]^{-}\}\right\rangle$.} \label{DefAlpha2gplus1Alpha0iAlpha1plusfig1}
 \end{center}
\end{figure}\\
 for $i \in \{0,n\}$, to prove the result for $[\alpha_{2g+1},\alpha_{0}^{i},\alpha_{1}]^{+}$, we need the auxiliary curve (see Figure \ref{Translemaprop1fig11}): $$[\alpha_{3}, \ldots, \alpha_{2g-1}]^{+} = \langle (\Cf \backslash \{\alpha_{2}, \alpha_{2g}\}) \cup \{[\alpha_{2g+1}, \alpha_{0}^{i}, \alpha_{1}]^{-}\}\rangle \in (\Cf \cup \Bf_{0})^{5};$$
\begin{figure}[h]
 \begin{center}
  \resizebox{8cm}{!}{\input{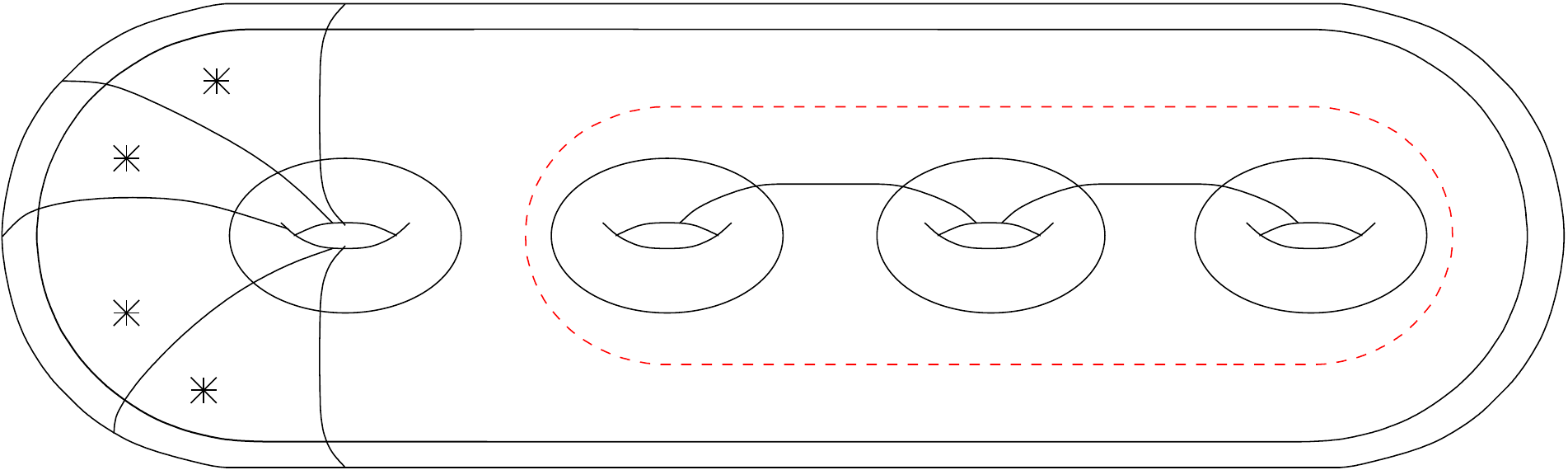_t}} \\[0.2cm]
  \resizebox{8cm}{!}{\input{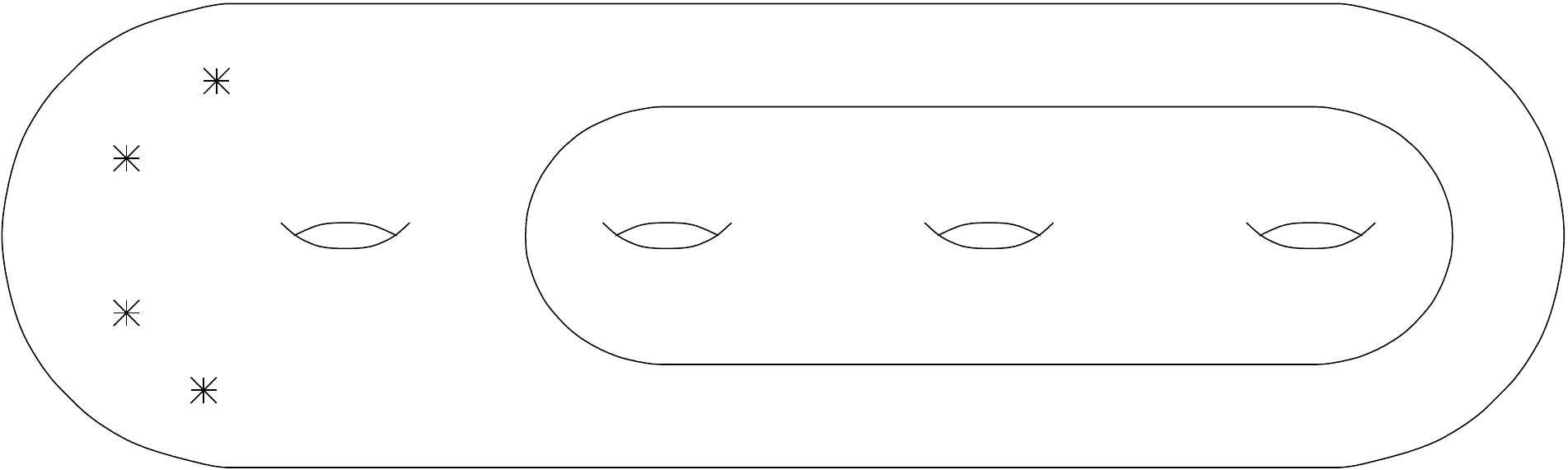_t}} \caption{$[\alpha_{3}, \ldots, \alpha_{2g-1}]^{+} = \langle (\Cf \backslash \{\alpha_{2}, \alpha_{2g}\}) \cup$ \textcolor{red}{$\{[\alpha_{2g+1}, \alpha_{0}^{i}, \alpha_{1}]^{-}\}$} $\rangle$ for $j = 3$ in $S_{4,4}$.}\label{Translemaprop1fig11}
 \end{center}
\end{figure}\\
 and so (see Figure \ref{Translemaprop1fig13}), $$[\alpha_{2g+1}, \alpha_{0}^{i}, \alpha_{1}]^{+} = \langle (C_{i} \backslash \{\alpha_{2},\alpha_{2g}\}) \cup \Df \cup \{[\alpha_{2g+1}, \alpha_{0}^{i}, \alpha_{1}]^{-}, [\alpha_{3}, \ldots, \alpha_{2g-1}]^{+}\}\rangle.$$
\begin{figure}[h]
 \begin{center}
  \resizebox{8cm}{!}{\input{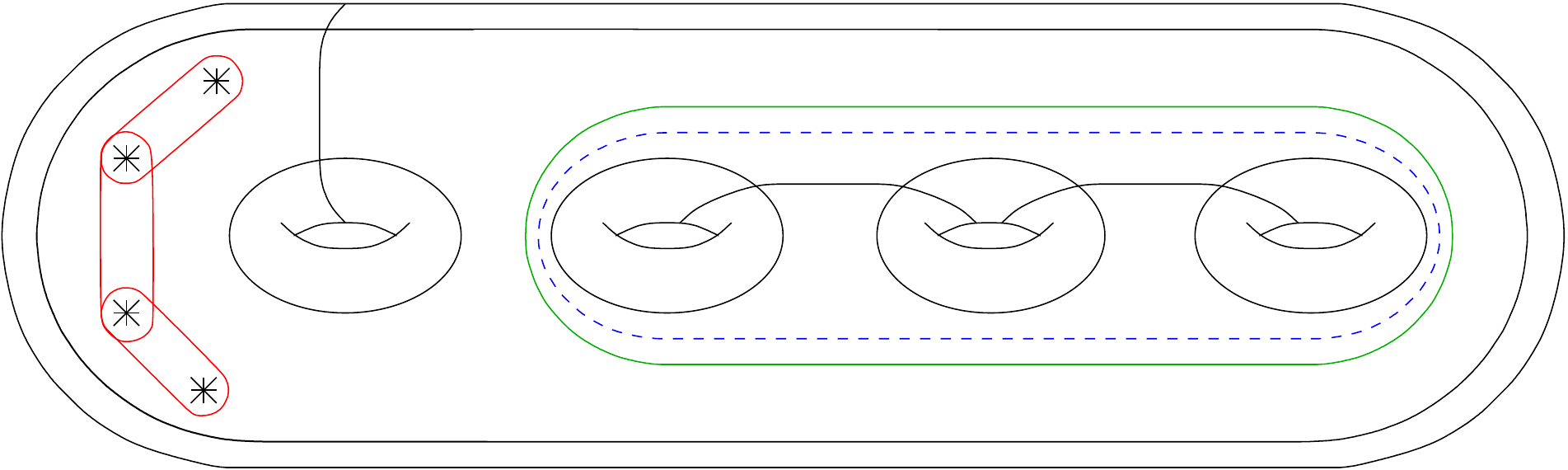_t}} \\[0.2cm]
  \resizebox{8cm}{!}{\input{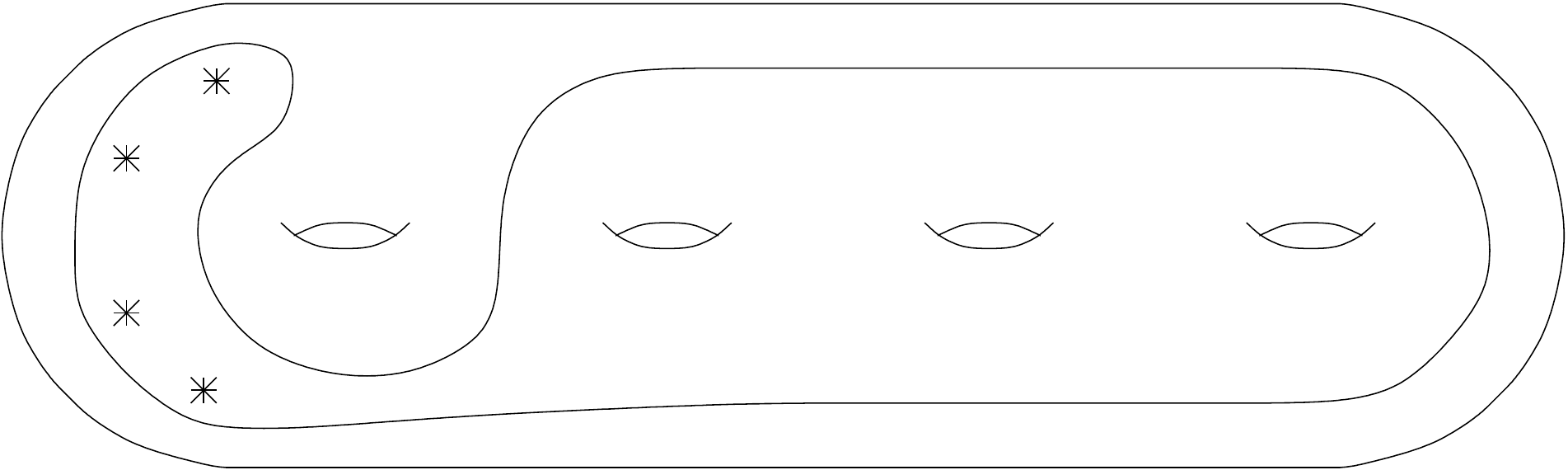_t}} \caption{$[\alpha_{2g+1}, \alpha_{0}^{i}, \alpha_{1}]^{+} = \langle (C_{i} \backslash \{\alpha_{2},\alpha_{2g}\}) \cup$ \textcolor{red}{$\Df$} $\cup \{$ \textcolor{blue}{$[\alpha_{2g+1}, \alpha_{0}^{i}, \alpha_{1}]^{-}$}$, $ \textcolor{green}{$[\alpha_{3}, \ldots, \alpha_{2g-1}]^{+}$} $\}\rangle$ for $j = 3$ in $S_{4,4}$.}\label{Translemaprop1fig13}
 \end{center}
\end{figure}\\
\indent Therefore, for $k \in \mathbb{Z}$ (with $\alpha_{0} = \alpha_{0}^{i}$ when necessary), $[\alpha_{k}, \alpha_{k+1}, \alpha_{k+2}]^{\pm} \in (\Cf \cup \Bf_{0})^{6}$.
\end{proof}
\indent Finally, we define the set of auxiliary curves $\Bf$. Note that by construction, $\Bf \subset (\Cf \cup \Bf_{0})^{6}$. $$\Bf \ColonEqq \Bf_{0} \cup \Bf_{T} \cup \left(\bigcup_{\stackrel{i \in \{0, \ldots, n\}}{\scriptscriptstyle k \in \mathbb{Z}}} \{[\alpha_{k+1}, \ldots, \alpha_{k+(2g-1)}]^{+}, [\alpha_{k},\alpha_{k+1},\alpha_{k+2}]^{\pm}\} \right).$$
\subsection{Proof of Theorem \ref{Thm3}}\label{chap2sec3}
\indent Having defined the principal set $\Cf \cup \Bf_{0}$ and constructed the auxiliary sets $\Df \subset \Ef$ and $\Bf_{T} \subset \Bf$, we state some results to ease the proofs of the following section, as well as give necessary notation and the proof of Theorem \ref{Thm3}.
\begin{Prop}\label{OnlyCUB0}
 Let $h \in \EMod{S}$ and $Y \subset \ccomp{S}$. If $h(Y) \subset (\Cf \cup \Bf_{0})^{k}$ for some $k \in \mathbb{Z}$, then $h(Y^{m}) \subset (\Cf \cup \Bf_{0})^{k+m}$.
\end{Prop}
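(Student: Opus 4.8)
The plan is to argue by induction on $m$, exactly as in the proof of Lemma \ref{translema}, the only new ingredient being that we carry along the extra $k$ levels coming from the hypothesis on $h(Y)$ rather than from any constraint on $h$ itself. The two facts that drive the argument are the monotonicity of rigid expansion, namely $(\Cf \cup \Bf_{0})^{j} \subset (\Cf \cup \Bf_{0})^{j+1}$ for every $j$ (immediate from $Y^{1} \supseteq Y$), and the equivariance of the relation $\langle \cdot \rangle$: if $\beta = \langle B \rangle$ then $h(\beta) = \langle h(B)\rangle$ for every $h \in \EMod{S}$, as recorded in the Introduction.

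For the base case $m = 0$ we have $Y^{0} = Y$, so the hypothesis $h(Y) \subset (\Cf \cup \Bf_{0})^{k}$ is precisely the desired conclusion. For the inductive step, I would assume $h(Y^{m-1}) \subset (\Cf \cup \Bf_{0})^{k+m-1}$ and take any $\gamma \in Y^{m} = (Y^{m-1})^{1}$, splitting into two cases. If $\gamma \in Y^{m-1}$, then by the inductive hypothesis and monotonicity $h(\gamma) \in (\Cf \cup \Bf_{0})^{k+m-1} \subset (\Cf \cup \Bf_{0})^{k+m}$. Otherwise $\gamma = \langle B \rangle$ for some $B \subset Y^{m-1}$; applying equivariance gives $h(\gamma) = \langle h(B)\rangle$, and since $h(B) \subset h(Y^{m-1}) \subset (\Cf \cup \Bf_{0})^{k+m-1}$ by the inductive hypothesis, the curve $h(\gamma)$ is uniquely determined by a subset of $(\Cf \cup \Bf_{0})^{k+m-1}$ and hence lies in its first expansion, i.e. $h(\gamma) \in (\Cf \cup \Bf_{0})^{k+m}$. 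In either case $h(\gamma) \in (\Cf \cup \Bf_{0})^{k+m}$, which closes the induction.

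I do not expect a genuine obstacle here: the statement is a routine strengthening of Lemma \ref{translema}, obtained by relaxing ``$h$ stabilizes $\Cf$'' to ``$h$ sends $Y$ into some expansion'', and the proof is purely formal once one notes that the equivariance of $\langle \cdot \rangle$ holds for all of $\EMod{S}$ and not merely for $H_{\Cf}$. The only point worth stating with care is that the whole of $Y^{m}$ is handled uniformly by the two-case analysis, so that no hypothesis on $Y$ beyond $Y \subset \ccomp{S}$ is required; this is what makes the proposition a convenient black box for the subsequent sections, where one repeatedly certifies that the image of some subgraph lands in a fixed expansion of the principal set and then propagates that containment through further expansions.
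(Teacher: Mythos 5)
Your proof is correct and follows essentially the same route as the paper: the paper proves the case $m=1$ by exactly your two-case analysis (using that $h(\gamma) = \langle h(A) \rangle$ for a mapping class $h$) and then invokes induction, which you have simply written out in full. The only difference is presentational, in that you make the base case and the monotonicity $(\Cf \cup \Bf_{0})^{j} \subset (\Cf \cup \Bf_{0})^{j+1}$ explicit.
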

\begin{proof}
 Let $\gamma \in Y^{1}$. If $\gamma \in Y$, then $h(\gamma) \in (\Cf \cup \Bf_{0})^{k} \subset (\Cf \cup \Bf_{0})^{k+1}$ by hypothesis. Otherwise, there exists a set $A \subset Y$ such that $\gamma = \langle A \rangle$. Given that $h$ is a mapping class we have that $h(\gamma) = \langle h(A) \rangle$, and since by hypothesis $h(A) \subset h(Y) \subset (\Cf \cup \Bf_{0})^{k}$ we get that $h(\gamma) \in (\Cf \cup \Bf_{0})^{k+1}$. This implies that $h(Y^{1}) \subset (\Cf \cup \Bf_{0})^{k+1}$. By induction, we obtain that $h(Y^{m}) \subset (\Cf \cup \Bf_{0})^{k+m}$.\\
\end{proof}
\indent As a consequence of this proposition, since $\Cf \cup \Ef \cup \Bf \subset (\Cf \cup \Bf_{0})^{6}$, we have the following corollary.
\begin{Cor}\label{CorOnlyCUB0}
 Let $h \in \EMod{S}$. If $h(\Cf \cup \Bf_{0}) \subset (\Cf \cup \Bf_{0})^{k}$ for some $k \in \mathbb{Z}$, then $h(\Cf \cup \Ef \cup \Bf) \subset (\Cf \cup \Bf_{0})^{k+6}$.
\end{Cor}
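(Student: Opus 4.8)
The plan is to derive this as a direct formal consequence of Proposition \ref{OnlyCUB0}, applied to the subgraph $Y = \Cf \cup \Bfn$ with $m = 6$. No new geometric input is required: the entire content of the corollary is packaged in the inclusion $\Cf \cup \Ef \cup \Bf \subset (\Cf \cup \Bfn)^{6}$, which has already been assembled piecemeal during the construction of the auxiliary curves.

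First I would recall why $\Cf \cup \Ef \cup \Bf \subset (\Cf \cup \Bfn)^{6}$ holds. By the definitions of the auxiliary sets one has $\Df \subset \Cf^{2}$ and $\Ef \subset \Cf^{3}$; moreover $\Bf_{T} \subset (\Cf \cup \Bfn)^{4}$, and the remaining curves making up $\Bf$, namely the $[\alpha_{k+1}, \ldots, \alpha_{k+(2g-1)}]^{+}$ and the $[\alpha_{k}, \alpha_{k+1}, \alpha_{k+2}]^{\pm}$, lie in $(\Cf \cup \Bfn)^{4}$ and $(\Cf \cup \Bfn)^{6}$ respectively by Propositions \ref{translemaprop2} and \ref{translemaprop1}. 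Since $\Cf \subset \Cf \cup \Bfn$ and the rigid expansion is monotone in both the base set and the exponent, $\Cf^{3} \subset (\Cf \cup \Bfn)^{3} \subset (\Cf \cup \Bfn)^{6}$, and taking the union yields $\Cf \cup \Ef \cup \Bf \subset (\Cf \cup \Bfn)^{6}$, which is precisely the inclusion invoked just before the corollary.

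Next I would apply Proposition \ref{OnlyCUB0}. Its hypothesis, with $Y = \Cf \cup \Bfn$, is exactly our assumption $h(\Cf \cup \Bfn) \subset (\Cf \cup \Bfn)^{k}$; taking $m = 6$, its conclusion gives $h\big((\Cf \cup \Bfn)^{6}\big) \subset (\Cf \cup \Bfn)^{k+6}$. Combining this with the inclusion from the previous step and using that images preserve containment,
$$h(\Cf \cup \Ef \cup \Bf) \subset h\big((\Cf \cup \Bfn)^{6}\big) \subset (\Cf \cup \Bfn)^{k+6},$$
as desired.

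There is no genuine obstacle at this stage: the corollary is a bookkeeping statement. All the real work lies upstream---in establishing membership of each auxiliary curve in a bounded rigid expansion of the principal set (Propositions \ref{translemaprop2} and \ref{translemaprop1}) and in the inductive Proposition \ref{OnlyCUB0}, which transports a one-step containment of $h(\Cf \cup \Bfn)$ through the rigid-expansion operator. The only point one must check is that the exponent $6$ is uniform over all curves generating $\Ef$ and $\Bf$, so that a single value of $m$ suffices; this is guaranteed by the explicit bounds recorded in the two propositions.
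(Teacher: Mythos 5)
Your proof is correct and is essentially the paper's own argument: the corollary is stated there as an immediate consequence of Proposition \ref{OnlyCUB0} applied to $Y = \Cf \cup \Bfn$ with $m = 6$, using precisely the inclusion $\Cf \cup \Ef \cup \Bf \subset (\Cf \cup \Bfn)^{6}$ recorded during the construction of the auxiliary sets (via $\Ef \subset \Cf^{3}$, $\Bf_{T} \subset (\Cf \cup \Bfn)^{4}$, and Propositions \ref{translemaprop2} and \ref{translemaprop1}). Your explicit justification of the monotonicity of rigid expansion in both the base set and the exponent fills in a step the paper leaves tacit, but it is the same route.
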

\indent An \textit{outer curve} $\alpha$ is a separating curve such that cutting along $\alpha$ one of the resulting connected components is homeomorphic to a thrice-punctured sphere. Let $\alpha, \beta \in \ccomp{S}$ and $A, B \subset \ccomp{S}$. We denote by $\eta_{\alpha}(\beta)$ the half-twist of $\beta$ along $\alpha$ and $\eta_{A}(B) = \bigcup_{\gamma \in A} \eta_{\gamma}(B)$.\\
\indent We must recall that the half-twist $\eta_{\alpha}$ is defined if and only if $\alpha$ is an outer curve, and there is exactly one half-twist along $\alpha$ if $S \ncong S_{0,4}$. Let $\Hff \ColonEqq \{\epsilon^{i-2,i} \in \Df: 2 \leq i \leq n\}$, then we can state the following lemma.
\begin{Lema}\label{CUBen12}
 Let $\zeta = \beta_{\{2g-2,2g-1,2g\}}^{+}$, and $\Hff$ as above. Then $\tau_{\Cf \cup \{\zeta\}}^{\pm 1}(\Cf \cup \Ef \cup \Bf) \cup \eta_{\Hff}^{\pm 1}(\Cf \cup \Ef \cup \Bf) \subset (\Cf \cup \Bf_{0})^{18}$.
\end{Lema}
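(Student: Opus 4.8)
The plan is to reduce the entire statement to a containment about the principal set $\Cf \cup \Bfn$ and then invoke Corollary \ref{CorOnlyCUB0}. Each map occurring in the lemma — the Dehn twists $\tau_{\alpha}^{\pm 1}$ with $\alpha \in \Cf \cup \{\zeta\}$ and the half-twists $\eta_{\gamma}^{\pm 1}$ with $\gamma \in \Hff$ — is a single mapping class $h \in \Mod{S}$, so it satisfies $h(\langle B\rangle) = \langle h(B)\rangle$. Hence, if I can show that every such $h$ maps the principal set into $(\Cf \cup \Bfn)^{12}$, then Corollary \ref{CorOnlyCUB0} (with $k = 12$) upgrades this to $h(\Cf \cup \Ef \cup \Bf) \subset (\Cf \cup \Bfn)^{18}$, which is exactly the assertion. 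The whole problem is therefore to control the images of the principal curves under the generators, with a budget of twelve expansions, while the passage from $\Cf \cup \Bfn$ to the larger set $\Cf \cup \Ef \cup \Bf$ is free of charge (up to the fixed cost $6$) thanks to the corollary.

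For the Dehn twists I would follow the template of the closed-surface case (Claims 1--3 behind Lemma \ref{CUBen4}), splitting the work according to which curve is twisted and which is moved. Whenever the twisting curve is disjoint from the target, the twist fixes it and there is nothing to prove; by construction of $\Cf$ and $\Bfn$ the only nontrivial intersections have geometric intersection number one, and there I would use the exchange relation of Equation \ref{taupm} together with the mechanism of Lemma \ref{Dehnlemma}, now applied to the maximal closed chains $C_{i} = \{\alpha_{0}^{i}, \alpha_{1}, \ldots, \alpha_{2g+1}\}$, to realise $\tau_{\alpha}^{\pm 1}(\beta)$ as a curve uniquely determined by an explicit finite configuration. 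The essential inputs are the auxiliary families assembled in Subsection \ref{subsec5-2}: Propositions \ref{translemaprop2} and \ref{translemaprop1} already place the translated curves $[\alpha_{k+1}, \ldots, \alpha_{k+(2g-1)}]^{+}$ and $[\alpha_{k}, \alpha_{k+1}, \alpha_{k+2}]^{\pm}$ inside $(\Cf \cup \Bfn)^{4}$ and $(\Cf \cup \Bfn)^{6}$ respectively, and these are the building blocks out of which the twisted curves get determined. Propagating the worst case through such determinations, and bookkeeping the depths via Proposition \ref{OnlyCUB0}, is what yields the intermediate bound twelve.

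The genuinely new, and hardest, ingredient is the half-twist $\eta_{\gamma}^{\pm 1}$ for $\gamma = \eps{i-2}{i} \in \Hff$. Such a $\gamma$ is an outer curve bounding a thrice-punctured sphere, so $\eta_{\gamma}$ transposes the two enclosed punctures: it fixes every element of $\Cf \cup \Bfn$ disjoint from $\gamma$ but nontrivially permutes the puncture-encircling curves $\Cff$ and those elements of $\Bfn$ whose definition depends on the basepoint index $i$ (the ones involving $\alpha_{0}$). I would treat these by writing their images as curves uniquely determined by configurations drawn from $\Cf$, the auxiliary set $\Ef$, and the translated curves of $\Bf$, again keeping each determination within the allotted depth. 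Because the hyperelliptic symmetry and the transitivity of $\Hcp$ that collapsed the closed case to a single model computation are unavailable here, the analysis must be organized by the position of $\gamma$ relative to the punctures, which is what spreads the proof over the six subsections \ref{chap2sec4}--\ref{chap2sec9}.

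The main obstacle I anticipate is precisely this interaction between the half-twists and the asymmetric puncture data: the curves depending on $\alpha_{0}^{i}$ cannot be reduced to a single representative, so each must be handled individually, and for every proposed determining set one must verify genuine \emph{uniqueness} — that the candidate curve is the only curve disjoint from the set — rather than mere disjointness, which requires inspecting the surface complement case by case. The sets $\Df$, $\Ef$, and $\Bf$ of Subsection \ref{subsec5-2} are engineered exactly to absorb this asymmetry, so the technical heart of the argument lies in the explicit identification of the determining sets for each twisted and half-twisted image, after which the expansion-depth accounting reduces to a routine application of Proposition \ref{OnlyCUB0} and Corollary \ref{CorOnlyCUB0}.
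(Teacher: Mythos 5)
Your proposal follows the paper's own skeleton: the reduction via Corollary \ref{CorOnlyCUB0} with $k=12$, the generator-by-generator analysis, the use of Propositions \ref{translemaprop1} and \ref{translemaprop2} as building blocks, and the depth bookkeeping through Proposition \ref{OnlyCUB0} are exactly how the paper proceeds (its Claims 1--6 in Subsections \ref{chap2sec4}--\ref{chap2sec9}, with bounds $8$, $12$, $8$, $10$, $7$ and $11$). However, your Dehn-twist paragraph contains a concrete error that leaves one family of cases uncovered. You assert that, by construction, every nontrivial intersection among the relevant curves has geometric intersection number one, and you propose to handle all such cases with Equation \ref{taupm} and the mechanism of Lemma \ref{Dehnlemma}. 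This fails precisely when the twisting curve is $\zeta = \beta_{\{2g-2,2g-1,2g\}}^{+}$ and the target is a curve $\beta_{J}^{+} \in \Bf_{0}$ whose defining chain overlaps $\{\alpha_{2g-2},\alpha_{2g-1},\alpha_{2g}\}$: each curve of the bounding pair of a chain $\{\alpha_{2l}, \ldots, \alpha_{2(l+k)}\}$ is homologous to $[\alpha_{2l}] + [\alpha_{2l+2}] + \cdots + [\alpha_{2(l+k)}]$, and since all the even-indexed curves appearing in these sums for $\zeta$ and for $\beta_{J}^{+}$ are pairwise disjoint, the algebraic intersection number of $\zeta$ with $\beta_{J}^{+}$ is zero; hence their geometric intersection number is even, so it is at least $2$ whenever it is nonzero, never $1$. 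Equation \ref{taupm} therefore does not apply there. This family is exactly the paper's Claim 4, $\tau_{\zeta}^{\pm 1}(\Cf \cup \Bf_{0}) \subset (\Cf \cup \Bf_{0})^{10}$ (Subsection \ref{chap2sec7}), and it is handled by the other mechanism: write $\beta = \langle C \cup E \cup B \rangle$ with $C \subset \Cf$, $E \subset \Ef$, and every curve of $B$ disjoint from $\zeta$, then push the determining set through the twist, bounding the depth of $\tau_{\zeta}^{\pm 1}(C)$ by Claim 3 and that of $\tau_{\zeta}^{\pm 1}(E)$ by Proposition \ref{OnlyCUB0}. You do have this mechanism in hand --- you invoke it for the half-twists --- but as written your plan does not apply it to the twists along $\zeta$, which is where it is indispensable.

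A smaller inaccuracy: Lemma \ref{Dehnlemma} cannot be imported verbatim, since its statement and proof live on the closed surface; it rests on Lemma \ref{translema} (invariance of the closed-case principal set under the setwise stabilizer $\Hcf$) and on the transitivity of $\Hcp$, neither of which survives the punctures. The paper instead redoes the model computation twice, as Lemmas \ref{taupm1v1CUB0} and \ref{taupm1v2CUB0} --- two computations are needed because the rotation maps $h_{i}$ shift the chain indices by two, so $\tau_{\alpha_{2g}}^{\pm 1}(\alpha_{2g-1})$ and $\tau_{\alpha_{2g-1}}^{\pm 1}(\alpha_{2g-2})$ lie in different orbits --- and each determining set must be completed by the curves $E(\cdot) \subset \Ef$ that cut off the punctures, without which the uniqueness required by $\langle \cdot \rangle$ fails. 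Since you explicitly acknowledge the loss of the hyperelliptic symmetry and of transitivity, this second point is a matter of execution rather than strategy; the genuine gap is the intersection-number claim above.
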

Assuming this lemma (for which we give a proof in the following subsections) we can proceed to prove Theorem \ref{Thm3} as follows.
\begin{proof}[\textbf{Proof of Theorem \ref{Thm3}}]
Let $\Gf = (\Cf \backslash \{\alpha_{2g+1}\}) \cup \{\zeta\}$. Recalling Lickorish-Humphries Theorem (see \cite{Lickorish}, \cite{Hump} and Section 4 of \cite{FarbMar}), we have that $\Mod{S}$ is generated by the Dehn twists along $\Gf$ if $n \leq 1$ and by the Dehn twists along $\Gf$ and the half twists along $\Hff$ if $2 \leq n$. By Lemma \ref{CUBen12} we know that $\tau_{\Gf}^{\pm 1}(\Cf \cup \Ef \cup \Bf) \cup \eta_{\Hff}^{\pm 1}(\Cf \cup \Ef \cup \Bf) \subset (\Cf \cup \Bf_{0})^{18}$.\\
\indent Let us denote by $f_{\gamma}$ the Dehn twist along $\gamma$ if $\gamma \in \Gf$ or the half-twist along $\gamma$ if $\gamma \in \Hff$.\\
\indent Now let $\gamma$ be either a nonseparating curve or an outer curve, and $\alpha$ an element in $\Gf$ or $\Hff$ respectively. There exists $h \in \Mod{S}$ such that $\gamma = h(\alpha)$. Thus, by an iterated used of Lemma \ref{CUBen12}, there are some curves $\gamma_{1}, \ldots, \gamma_{l} \in \Gf \cup \Hff$ and some exponents $n_{1}, \ldots, n_{l} \in \mathbb{Z}$, such that: $$\gamma = f_{\gamma_{1}}^{n_{1}} \circ \cdots \circ f_{\gamma_{l}}^{n_{l}} (\alpha) \in (\Cf \cup \Bf_{0})^{18(|n_{1}| + \ldots + |n_{l}|)}.$$ So, every nonseparating curve and every outer curve is an element of $\bigcup_{i \in \nat} (\Cf \cup \Bf_{0})^{i}$.
\begin{figure}[h]
\begin{center}
 \includegraphics[width=10cm]{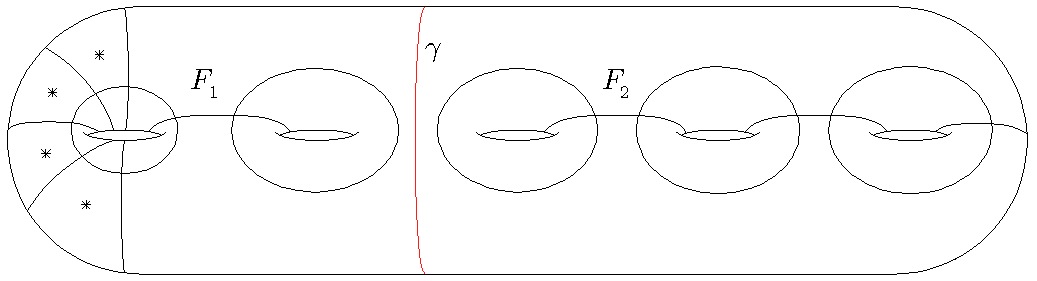}
 \includegraphics[width=10cm]{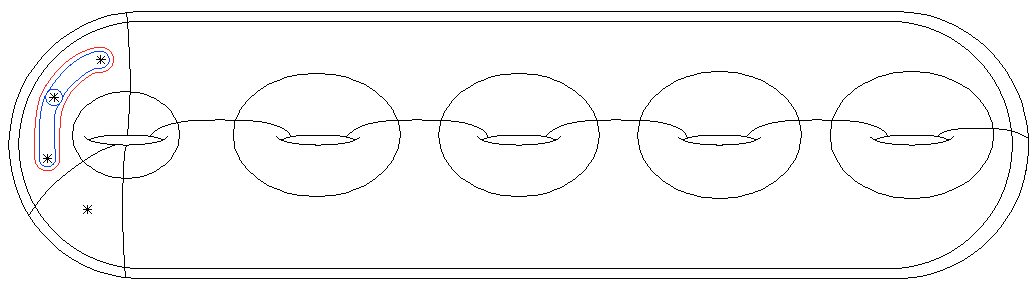} \caption{Above, a separating curve $\gamma$ with every connected component of $S \backslash \{\gamma\}$ of positive genus; below, a separating curve $\gamma$ with a connected component of $S \backslash \{\gamma\}$ of genus zero, the set $F_{1}$ in blue, the set $F_{2}$ in black, and $\gamma$ in red.} \label{PseudoChains}
\end{center}
\end{figure}\\
\indent Let $\gamma$ be a nonouter separating curve. We can always find sets $F_{1}$ and $F_{2}$ containing only nonseparating and outer curves, such that $\gamma = \langle F_{1} \cup F_{2} \rangle$, see Figure \ref{PseudoChains}. By the previous case, $F_{1} \cup F_{2} \subset (\Cf \cup \Bf_{0})^{k}$ for some $k \in \nat$; thus $\gamma \in (\Cf \cup \Bf_{0})^{k+1}$. Therefore $\ccomp{S} = \bigcup_{i \in \nat} (\Cf \cup \Bf_{0})^{i}$.
\end{proof}
Now, to prove Lemma \ref{CUBen12}, due to Corollary \ref{CorOnlyCUB0} we only need to prove that $\tau_{\Cf \cup \{\zeta\}}^{\pm 1}(\Cf \cup \Bf_{0}) \cup \eta_{\Hff}^{\pm 1}(\Cf \cup \Bf_{0}) \subset (\Cf \cup \Bf_{0})^{12}$. For this, we divide the proof into the following claims:\\
\textbf{Claim 1:} $\tau_{\Cf}^{\pm 1}(\Cf) \subset (\Cf \cup \Bf_{0})^{8}$\\
\textbf{Claim 2:} $\tau_{\Cf}^{\pm 1}(\Cf \cup \Bf_{0}) \subset (\Cf \cup \Bf_{0})^{12}$\\
\textbf{Claim 3:} $\tau_{\zeta}^{\pm 1}(\Cf) \subset (\Cf \cup \Bf_{0})^{8}$\\
\textbf{Claim 4:} $\tau_{\zeta}^{\pm 1}(\Cf \cup \Bf_{0}) \subset (\Cf \cup \Bf_{0})^{10}$\\
\textbf{Claim 5:} $\eta_{\Hff}^{\pm 1}(\Cf) \subset (\Cf \cup \Bf_{0})^{7}$\\
\textbf{Claim 6:} $\eta_{\Hff}^{\pm 1}(\Cf \cup \Bf_{0}) \subset (\Cf \cup \Bf_{0})^{11}$
\subsection{Proof of Claim 1: $\tau_{\Cf}^{\pm 1}(\Cf) \subset (\Cf \cup \Bf_{0})^{8}$}\label{chap2sec4}
\indent Let $\alpha_{j}, \alpha_{k} \in \Cf$ (taking $\alpha_{0} = \alpha_{0}^{i}$ when necessary). If $|k -j| > 1$, $i(\alpha_{j},\alpha_{k}) = 0$ and so we have that $\tau_{\alpha_{k}}^{\pm 1}(\alpha_{j}) = \alpha_{j} \in \Cf$. We then only need to prove for the case when $|k-j| = 1$.\\
\indent In contrast to the closed surface case $\EMod{S}$ \emph{does not act transitively} on $\Cf$ (it has two orbits), so here we first prove that $\tau_{\alpha_{2g}}^{\pm 1}(\alpha_{2g-1})$ and $\tau_{\alpha_{2g-1}}^{\pm 1}(\alpha_{2g-2})$ are elements of $(\Cf \cup \Bf_{0})^{8}$, then we use the action of a subgroup of $\EMod{S}$ to prove Claim 1.\\
\indent Let $A \subset \ccomp{S}$. We define $E(A) \ColonEqq \{\epsilon \in \Ef : i(\epsilon,\delta) = 0$ for all $\delta \in A\}$.\\
\indent Following \cite{Ara2}, as in Subsection \ref{subsec4-2}, we prove first that $\tau^{\pm 1}_{\alpha_{2g}} (\alpha_{2g-1}) \in (\Cf \cup \Bf_{0})^{8}$.
\begin{Lema}\label{taupm1v1CUB0}
 $\tau^{\pm 1}_{\alpha_{2g}} (\alpha_{2g-1}) \in (\Cf \cup \Bf_{0})^{8}$.
\end{Lema}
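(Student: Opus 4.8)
The plan is to follow the two-step structure of the closed-surface argument in Lemma \ref{DehnlemmaAra}, translated to the punctured setting. Because $\alpha_{2g-1}$ and $\alpha_{2g}$ lie at the end of the chain farthest from the punctures, the local picture near these curves, and hence the combinatorics of which curve is cut out by a given configuration, is essentially identical to the closed case; the genuinely new work is bookkeeping the expansion level inside $(\Cf \cup \Bf_{0})$, since $\Bf_{0}$ contains only the bounding pairs of odd-length subchains that both begin and end at an even index, and is therefore far less symmetric than the set $\Bf$ used in Section \ref{chap1}.

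First I would reproduce the auxiliary curve $\gamma_{+}$ from the proof of Lemma \ref{DehnlemmaAra} as $\gamma_{+} = \langle C_{+}\rangle$, with $C_{+}$ the punctured analogue of the closed-case determining set (adjoining the appropriate puncture-bounding curves $\alpha_{0}^{i} \in \Cff$ so as to cap off the punctured regions). The chain curves in $C_{+}$ lie in $\Cf$, and the even-starting bounding pair $[\alpha_{2},\ldots,\alpha_{2g-2}]^{+}$ lies in $\Bf_{0}$ (it corresponds to $J = \{2,\ldots,2g-2\}$); the one curve that is not immediately available is the odd-starting bounding pair $[\alpha_{2g-3},\alpha_{2g-2},\alpha_{2g-1}]^{+}$, which by Proposition \ref{translemaprop1} lies in $(\Cf \cup \Bf_{0})^{6}$. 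Hence every element of $C_{+}$ lies in $(\Cf \cup \Bf_{0})^{6}$, so that $\gamma_{+} = \langle C_{+}\rangle \in (\Cf \cup \Bf_{0})^{7}$.

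Next, exactly as in the closed case, I would exhibit the twist as $\tau_{\alpha_{2g}}(\alpha_{2g-1}) = \langle C_{+}'\rangle$, where $C_{+}'$ consists of chain curves together with the two bounding pairs $[\alpha_{2g-2},\alpha_{2g-1},\alpha_{2g}]^{\pm}$ and the curve $\gamma_{+}$ (compare Figure \ref{Dehntwists}). Here both curves $[\alpha_{2g-2},\alpha_{2g-1},\alpha_{2g}]^{\pm}$ begin and end at even indices, so they already belong to $\Bf_{0}$; the only high-level member of $C_{+}'$ is $\gamma_{+} \in (\Cf \cup \Bf_{0})^{7}$. Therefore $\tau_{\alpha_{2g}}(\alpha_{2g-1}) = \langle C_{+}'\rangle \in (\Cf \cup \Bf_{0})^{8}$. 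The inverse twist $\tau_{\alpha_{2g}}^{-1}(\alpha_{2g-1})$ is obtained identically from mirror sets $C_{-}$, $C_{-}'$, yielding the claimed $\tau_{\alpha_{2g}}^{\pm 1}(\alpha_{2g-1}) \in (\Cf \cup \Bf_{0})^{8}$.

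I expect the main obstacle to be precisely this asymmetry of $\Bf_{0}$: the odd-starting bounding pairs, which are free (level $0$) in the closed case, must now be imported through Propositions \ref{translemaprop2} and \ref{translemaprop1}, and this single substitution is what inflates the bound from level $2$ in Lemma \ref{DehnlemmaAra} to level $8$ here. A secondary point requiring care is unique determination in the presence of punctures: one must verify that, after adjoining the puncture-bounding curves $\alpha_{0}^{i}$, no extraneous essential curve, in particular none running into a punctured region, is disjoint from all of $C_{+}$ (respectively $C_{+}'$), so that the corresponding intersection of links is genuinely a single vertex.
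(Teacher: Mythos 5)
Your proposal reproduces the correct skeleton of the paper's argument --- the same two-step scheme (an auxiliary curve $\gamma_{+}$ at level $7$, then the twist at level $8$, with the inverse twist handled by mirror substitution), and the level arithmetic via Propositions \ref{translemaprop1} and \ref{translemaprop2} is exactly right. But there is a genuine gap at the heart of it: unique determination. You keep the closed-case sets $C_{+}$, $C_{+}^{\prime}$ of Lemma \ref{DehnlemmaAra} and propose to restore uniqueness by adjoining the curves $\alpha_{0}^{i} \in \Cff$; the paper does something different. Its sets are not the closed-case ones (in $C_{1^{+}}$ the curve $[\alpha_{2},\ldots,\alpha_{2g-2}]^{+}$ is replaced by $[\alpha_{2g-4},\alpha_{2g-3},\alpha_{2g-2}]^{+}$ together with $[\alpha_{1},\ldots,\alpha_{2g-1}]^{+}$), and, crucially, the determining sets are $C_{1^{+}} \cup E(C_{1^{+}})$ and $C_{1^{+}}^{\prime} \cup E(C_{1^{+}}^{\prime})$, where $E(A) = \{\epsilon \in \Ef : i(\epsilon,\delta) = 0$ for all $\delta \in A\}$. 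It is these curves of $\Ef$ (the $\eps{i}{j}$ and $\epsp{k}{i}{j}$), not the curves of $\Cff$, that cut the punctured regions into pieces containing no essential curves, so that the intersection of links is a single vertex; and since $\Ef \subset \Cf^{3}$, adjoining them costs nothing in the level count. You explicitly defer this verification (``one must verify that no extraneous essential curve is disjoint from all of $C_{+}$''), but in the punctured case this verification is not a secondary point: it is the entire content of the lemma, and the reason its proof differs from that of Lemma \ref{DehnlemmaAra}.

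Moreover, your specific mechanism is not merely unverified but hinges on a fact that is nowhere established: for $C_{+} \cup \Cff$ to determine anything at all, the intended curve $\gamma_{+}$ must be disjoint from every $\alpha_{0}^{i}$, which (cutting along $\Cff$) amounts to the closed-case claim $i(\gamma_{+},\alpha_{0}) = 0$. This is not automatic: in Lemma \ref{DehnlemmaAra}, $\gamma_{+}$ is known to be disjoint only from the members of $C_{+}$, and $\alpha_{0}$ is one of the four chain curves ($\alpha_{0}, \alpha_{2g-3}, \alpha_{2g-1}, \alpha_{2g}$) that $\gamma_{+}$ is a priori permitted to cross. If $\gamma_{+}$ meets some $\alpha_{0}^{i}$, then $\bigcap_{\gamma \in C_{+}\cup\Cff}\lk{\gamma} = \varnothing$ and your first step determines no curve whatsoever. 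Even granting that disjointness, you would still need the complementary argument --- for instance: cutting along $\Cff$ leaves a genus $g-1$ subsurface with two boundary components and no punctures, identified with the closed surface cut along $\alpha_{0}$, whereupon uniqueness follows from Lemma \ref{DehnlemmaAra} --- to rule out all other surviving curves; none of this appears in the proposal. So the plan is plausible but incomplete exactly where the punctured case is hard: to repair it you must either prove these two geometric facts, or follow the paper and work with $C_{1^{+}} \cup E(C_{1^{+}})$ and $C_{1^{+}}^{\prime} \cup E(C_{1^{+}}^{\prime})$.
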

\begin{proof}
 \indent Taking the set $$C_{1^{+}} = \{\alpha_{2g+1}, \alpha_{1}, \alpha_{2}, \ldots, \alpha_{2g-4}, \alpha_{2g-2}, [\alpha_{2g-3}, \alpha_{2g-2}, \alpha_{2g-1}]^{+}, [\alpha_{2g-4},\alpha_{2g-3},\alpha_{2g-2}]^{+}, [\alpha_{1}, \ldots, \alpha_{2g-1}]^{+}\},$$
 then, by Proposition \ref{translemaprop1}, we get that $\gamma_{+} \ColonEqq \langle C_{1^{+}} \cup E(C_{1^{+}})\rangle \in (\Cf \cup \Bf_{0})^{7}$. See Figure \ref{DehnTwistPuncturedFig1}.
 \begin{figure}[h]
  \begin{center}
   \resizebox{10cm}{!}{\input{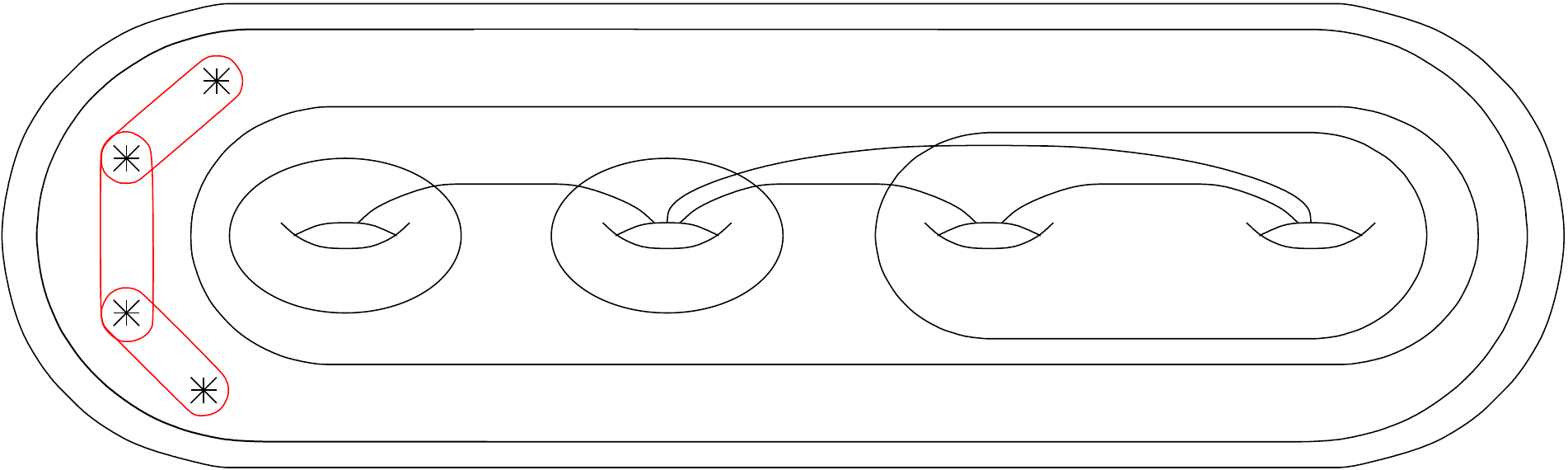_t}} \\[0.2cm]
   \resizebox{10cm}{!}{\input{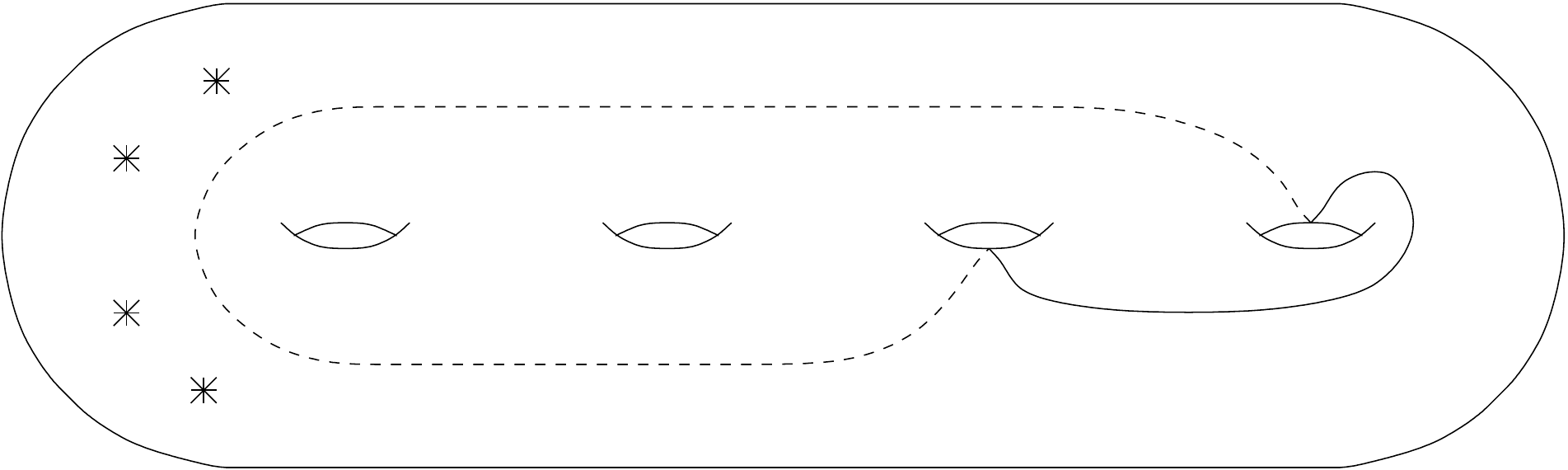_t}} \caption{Examples of $C_{1^{+}}$ and \textcolor{red}{$E(C_{1^{+}})$} above, and the corresponding curve $\gamma_{+}$ below.}\label{DehnTwistPuncturedFig1}
  \end{center}
 \end{figure}\\
 \indent Letting $$C_{1^{+}}^{\prime} = \{\alpha_{1}, \ldots, \alpha_{2g-3}, [\alpha_{2g-2},\alpha_{2g-1},\alpha_{2g}]^{+}, [\alpha_{2g-2},\alpha_{2g-1},\alpha_{2g}]^{-}, \gamma_{+}\},$$ we then have that $\tau_{\alpha_{2g}}(\alpha_{2g-1}) = \langle C_{1^{+}}^{\prime} \cup E(C_{1^{+}}^{\prime})\rangle \in (\Cf \cup \Bf_{0})^{8}$ (see Figure \ref{DehnTwistPuncturedFig3}).
 \begin{figure}[h]
  \begin{center}
   \resizebox{10cm}{!}{\input{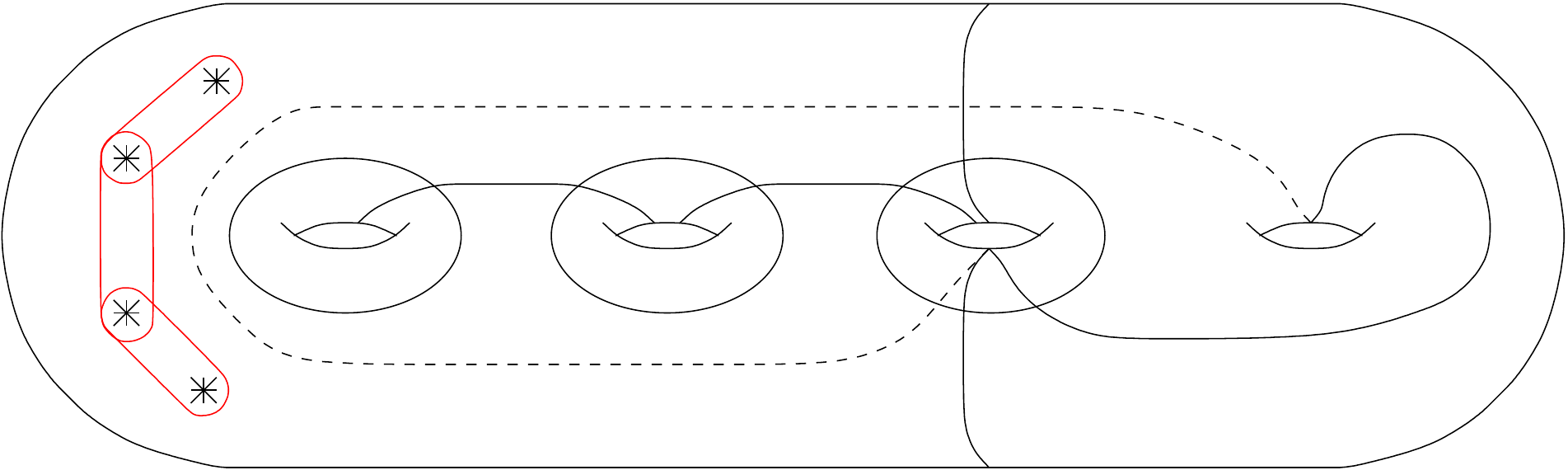_t}} \\[0.2cm]
   \resizebox{10cm}{!}{\input{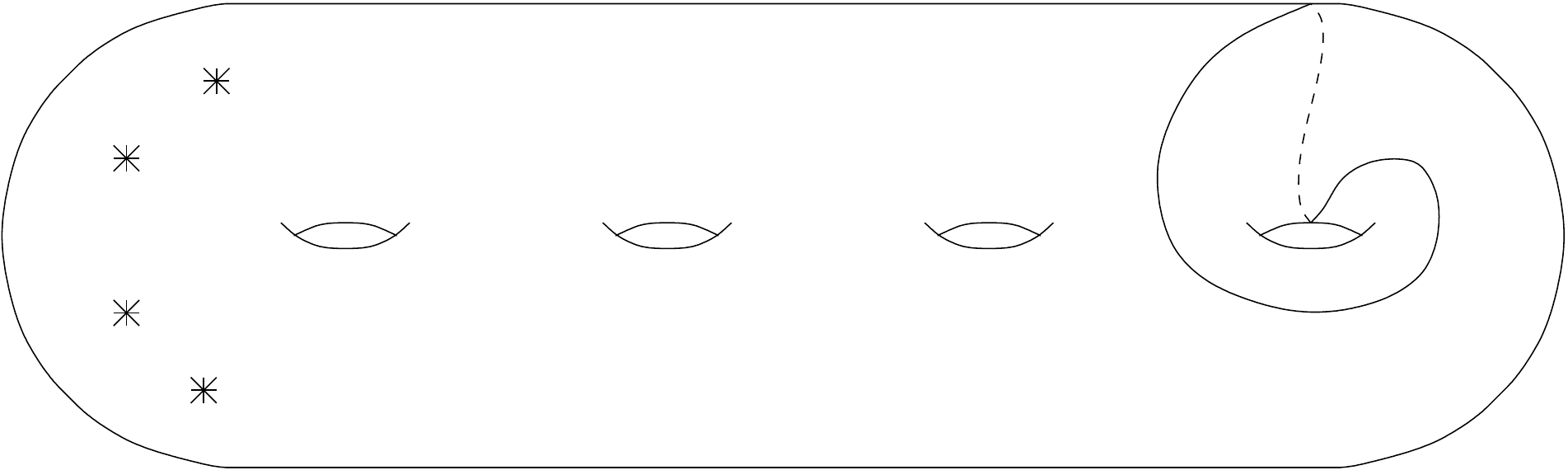_t}} \caption{Examples of $C^{\prime}_{1^{+}}$ and \textcolor{red}{$E(C^{\prime}_{1^{+}})$} above, and $\tau_{\alpha_{2g}}(\alpha_{2g-1})$ below.} \label{DehnTwistPuncturedFig3}
  \end{center}
 \end{figure}\\
 \indent As in Subsection \ref{subsec4-2}, let $C_{-}$ be the set obtained by substituting $[\alpha_{2g-4}, \alpha_{2g-3}, \alpha_{2g-2}]^{+} \in C_{+}$ for $[\alpha_{2g-4}, \alpha_{2g-3}, \alpha_{2g-2}]^{-}$, $\gamma_{-} = \langle C_{-} \cup E(C_{-})\rangle$, and $C_{-}^{\prime}$ be the set obtained by substituting $\gamma_{+}$ in $C_{+}^{\prime}$ for $\gamma_{-}$. As before, we have that $\tau_{\alpha_{2g}}^{-1}(\alpha_{2g-1}) = \langle C_{-}^{\prime} \cup E(C_{-}^{\prime})\rangle \in (\Cf \cup \Bf_{0})^{8}$.
\end{proof}
\indent To prove that $\tau_{\alpha_{2g-1}}^{\pm 1}(\alpha_{2g-2}) \in (\Cf \cup \Bf_{0})^{8}$, we cannot proceed as in the closed case. This is due to the fact that for any choice of $i$, there are nonhomeomorphic connected components of $S \backslash C_{i}$. This implies it is possible that there is no homeomorphism that leaves $C_{i}$ invariant but sends $\alpha_{2g}$ into $\alpha_{2g-1}$. So, we first prove a proposition for an auxiliary curve used only in this proof, and then follow a method similar to Lemma \ref{taupm1v1CUB0}.
\begin{Prop}\label{translemaprop3}
 Let $k \in \mathbb{Z}$. Then we have that $[\alpha_{2k}, \ldots, \alpha_{2k + (2g-2)}]^{-} \in (\Cf \cup \Bf_{0})^{2}$ for any choice of $i \in \{0, \ldots, n\}$ (with $\alpha_{0} = \alpha_{0}^{i}$ when necessary).
\end{Prop}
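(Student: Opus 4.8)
The plan is to treat $\delta := [\alpha_{2k}, \ldots, \alpha_{2k+2g-2}]^{-}$ as a curve uniquely determined by disjointness and to exhibit an explicit finite set $B \subset (\Cf \cup \Bfn)^{1}$ with $\delta = \langle B \rangle$; this immediately gives $\delta \in (\Cf \cup \Bfn)^{2}$. Note first that the relevant subchain $\{\alpha_{2k}, \ldots, \alpha_{2k+2g-2}\}$ has odd length $2g-1$, so $\delta$ is one of the two non-separating curves of its associated bounding pair, and its complement is $N \cup M$, where $N$ is the genus-$(g-1)$ regular neighbourhood of the subchain and $M$ is a planar surface carrying all $n$ punctures together with the two boundary curves $\delta = \delta^{-}$ and its partner $\delta^{+}$.

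Before the main construction I would dispose of the cases where no choice is forced. If $0 \notin \{2k, \ldots, 2k+2g-2\}$ modulo $2g+2$ (that is, $2k \equiv 2$), then $\delta = [\alpha_{2}, \ldots, \alpha_{2g}]^{-}$ is independent of $i$ and lies in $\Bfn$; and for $i = 1$ the curve uses $\alpha_0 = \alpha_0^{1}$, so it is by definition the element $\beta_J^{-} \in \Bfn$ with $J = \{2k, \ldots, 2k+2g-2\}$. In both situations $\delta \in (\Cf \cup \Bfn)^{0} \subset (\Cf \cup \Bfn)^{2}$.

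For the remaining case ($0 \in J$ and $i \neq 1$, so $\alpha_0 = \alpha_0^{i}$ genuinely enters the subchain) I would assemble $B$ in three layers, exactly in the spirit of Propositions \ref{translemaprop1} and \ref{translemaprop2}. The first layer is the subchain $\{\alpha_{2k}, \ldots, \alpha_{2k+2g-2}\} \subset \Cf$, which fills $N$. The second layer consists of the adjacent auxiliary curves $\eps{j}{j+2}$ for the appropriate range of $j$: since each $\eps{j}{j+2}$ is disjoint from the whole chain $\Cfn$ and from every element of $\Cff$ except $\alpha_0^{j+1}$, for $j \neq i-1$ it is disjoint from the entire subchain, hence it lies in the interior of $M$ and misses both $\delta^{\pm}$; as $j$ varies these curves isolate the punctures of $M$ and cut it into (once-punctured) disks and annuli. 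Because $\eps{j}{j+2} \in \Cf^{1} \subset (\Cf \cup \Bfn)^{1}$, the level budget is respected — this is the reason only the \emph{adjacent} $\epsilon$-curves may be used, the non-adjacent $\eps{k}{l} \in \Cf^{2}$ being already too deep. The third layer is a single curve, a chain neighbour of the subchain such as $\alpha_{2k-2}$ or a fan curve $\alpha_0^{j}$ on the $+$ side, chosen to meet $\delta^{+}$ while remaining disjoint from $\delta^{-}$; its sole purpose is to break the symmetry between $\delta^{+}$ and $\delta^{-}$.

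I would then verify the unique-determination claim: every curve in $B$ is disjoint from $\delta^{-}$ by construction, and $B$ fills $S$ off a single annular neighbourhood of $\delta^{-}$, so $\delta^{-}$ is the only essential curve disjoint from all of $B$. The main obstacle is precisely this verification carried out uniformly over all $k$ and all $i \in \{2, \ldots, n\}$: one must confirm both that the subchain together with the second-layer curves leaves no room in $M$ for any curve other than $\delta^{-}$, and that the third-layer curve truly crosses $\delta^{+}$ but not $\delta^{-}$, so that $\delta^{+}$ is excluded. I expect the delicate instances to be the extreme values $i$ near $0$ and $n$, where the puncture split across $\delta^{\pm}$ degenerates, and the values of $k$ for which the subchain wraps past the index $0$; for these I would record the determining set case-by-case, as is done for the neighbouring propositions, isolating any exceptional pairs $(i,k)$ for a separate figure.
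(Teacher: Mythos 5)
Your preliminary reductions are correct ($2k\equiv 2$, and $i=1$, where the curve is literally an element of $\Bfn$), and your overall plan --- an explicit determining set built from the subchain, puncture-isolating curves, and a symmetry-breaker --- is the same as the paper's. The gap is exactly where you depart from the paper: restricting the second layer to the \emph{adjacent} curves $\eps{j}{j+2}$ destroys unique determination, and not only in the extreme cases you flagged, but generically. Suppose $0\in J$, $i\neq 1$, and at least three punctures lie on the $-$ side of $\alpha_{0}^{i}$ (i.e.\ $n-i\geq 3$). The separating curve encircling exactly those punctures, which is the curve $\eps{i}{n}$, is disjoint from the subchain (it meets no curve of $\Cfn$ and no $\alpha_{0}^{l}$ with $l\leq i$ or $l\geq n$), from every adjacent $\eps{j}{j+2}$ in your layer (the puncture pairs involved are nested or disjoint), from $\alpha_{2k+2g}$, and from every fan curve $\alpha_{0}^{j}$ with $j<i$; and it is not $\delta^{-}$, being separating while $\delta^{-}$ is part of a bounding pair. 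Since for $n-i\geq 3$ it is not an element of your set, at least two curves are disjoint from everything in your set, so your set determines nothing. This is precisely why the paper's determining set contains the \emph{whole} family $\{\eps{j}{k} : i\leq j<k\leq n\}$ rather than only the adjacent curves: an element of a determining set can never be the determined curve (a curve is not disjoint from itself under the paper's convention), so the only way the paper excludes $\eps{i}{n}$ and its relatives is by putting them into the set. Your budget objection to this is legitimate --- the non-adjacent $\eps{j}{k}$ are only established to lie in $\Cf^{2}$, so the paper's displayed set literally certifies membership in $(\Cf\cup\Bfn)^{3}$ rather than $(\Cf\cup\Bfn)^{2}$ when $n-i>2$ --- but discarding those curves is not a repair: one must either accept them (the extra exponent is harmless in every later use of the proposition) or produce different level-one curves that cross $\eps{i}{n}$ while avoiding $\delta^{-}$, which your proposal does not do.

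There are also two geometric errors in the construction itself. First, $\alpha_{2k+2g}=\alpha_{2k-2}$ does not ``meet $\delta^{+}$'': it is disjoint from the whole subchain, hence lies in the complement of its regular neighbourhood and is disjoint from \emph{both} $\delta^{+}$ and $\delta^{-}$. Its real job, as in the paper, is to split that complement into the $+$ and $-$ sides and to be excluded as a candidate by membership in the set; what actually kills $\delta^{+}$ are the fan curves $\alpha_{0}^{j}$, $j<i$, which do cross $\delta^{+}$ and are disjoint from $\delta^{-}$ --- the paper includes all of $\{\alpha_{0}^{j} : 0\leq j\leq i\}$, whereas you include at most one such curve. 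Second, the case $i=n$ is not a delicate instance of your construction but a degeneration of the statement: with no punctures on the $-$ side, $\delta^{-}$ \emph{is} isotopic to $\alpha_{2k+2g}$, which the paper records as a separate case (membership at level $0$). In your scheme the proposed third-layer curve $\alpha_{2k-2}$ would then be the very curve being determined, and no determining set can contain its own target.
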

\begin{proof}
 If $2k \neq 2$ (modulo $2g+2$) and $i \neq n$, then we have (see Figure \ref{DefAlpha2kUntilAlpha2kplus2gminus2minusfig1}) $$[\alpha_{2k}, \ldots, \alpha_{2k + (2g-2)}]^{-} = \langle\{\alpha_{0}^{j} : 0 \leq j \leq i\} \cup \{\alpha_{2k}, \ldots, \alpha_{2k+(2g-2)}\} \cup \{\alpha_{2k+2g}\} \cup \{\eps{j}{k} : i \leq j < k \leq n\} \rangle.$$
\begin{figure}[h]
\begin{center}
 \resizebox{10cm}{!}{\input{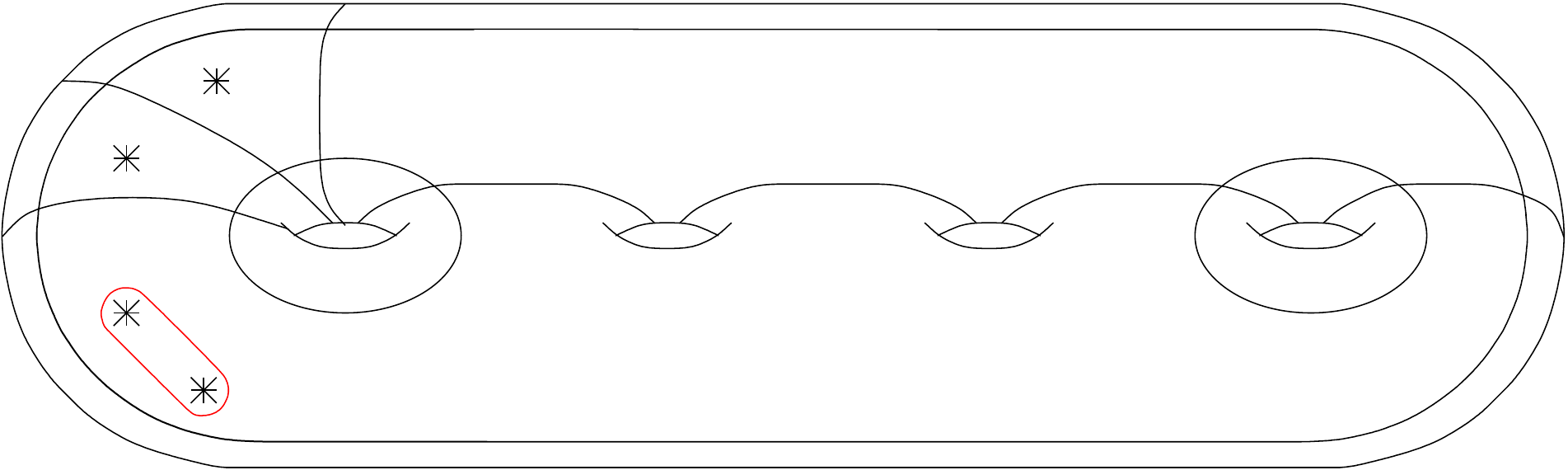_t}} \\[0.3cm]
 \resizebox{10cm}{!}{\input{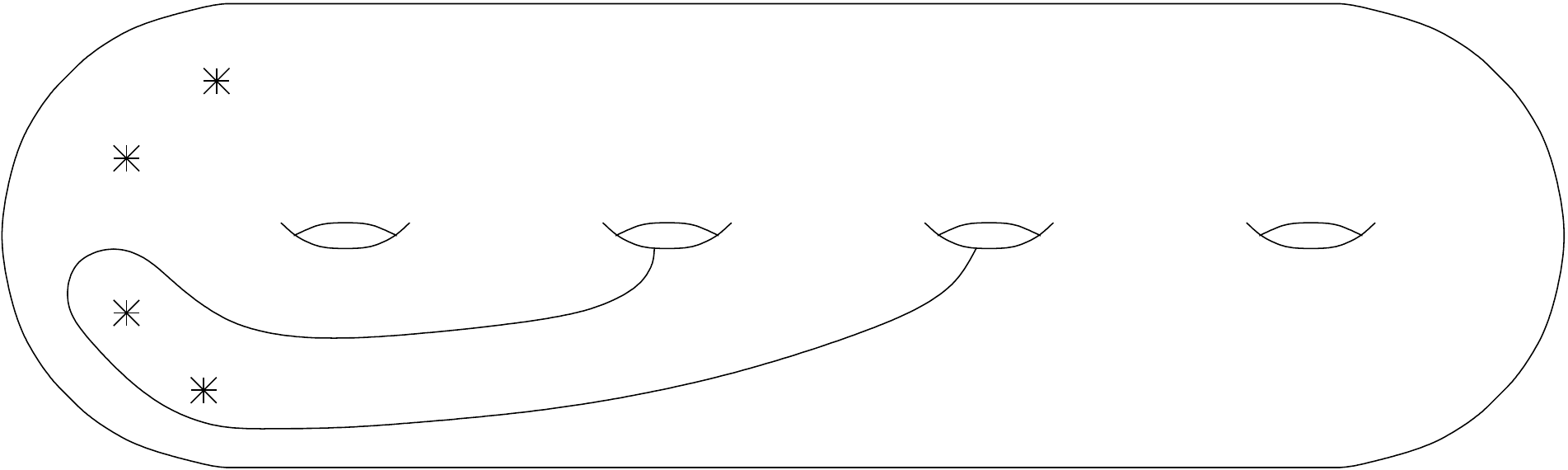_t}} \caption{The curve $[\alpha_{2k}, \ldots, \alpha_{2k + (2g-2)}]^{-} = \langle\{\alpha_{0}^{j} : 0 \leq j \leq i\} \cup \{\alpha_{2k}, \ldots, \alpha_{2k+(2g-2)}\} \cup \{\alpha_{2k+2g}\} \cup \{\eps{j}{k} : i \leq j < k \leq n\} \rangle$.} \label{DefAlpha2kUntilAlpha2kplus2gminus2minusfig1}
\end{center}
\end{figure}\\
 \indent If $2k \neq 2$ (modulo $2g+2$) and $i = n$, then we have $[\alpha_{2k}, \ldots, \alpha_{2k + (2g-2)}]^{-} = \alpha_{2k +2g}$.\\
 \indent If $2k = 2$ (modulo $2g+2$) then $[\alpha_{2k}, \ldots, \alpha_{2k + (2g-2)}]^{-} = \beta_{\{2, \ldots, 2g-2\}}^{-} = \alpha_{0}^{n}$.
\end{proof}
\vspace{0.5cm}
\indent Now, we prove:
\begin{Lema}\label{taupm1v2CUB0}
 $\tau_{\alpha_{2g-1}}^{\pm 1}(\alpha_{2g-2}) \in (\Cf \cup \Bf_{0})^{8}$.
\end{Lema}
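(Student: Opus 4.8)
The plan is to imitate the two-step unique-determination argument of Lemma \ref{taupm1v1CUB0}, first producing an auxiliary intermediate curve and then using it to pin down the twist. The essential new difficulty, already flagged in the text, is that twisting $\alpha_{2g-2}$ (even index) along $\alpha_{2g-1}$ (odd index) is \emph{not} carried to $\tau_{\alpha_{2g}}^{\pm 1}(\alpha_{2g-1})$ by any mapping class stabilizing one of the chains $C_{i}$; consequently I cannot simply transport the previous lemma by an element of a stabilizer, and I must rebuild both determining sets by hand. In doing so, every curve that sat in $\Bf_{0}$ in the old argument but is now replaced by an \emph{odd-indexed} analog will be supplied instead by the auxiliary Propositions \ref{translemaprop1}--\ref{translemaprop3}.

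First I would construct the intermediate curve $\delta_{+}$, the positional analog of $\gamma_{+}$ from Lemma \ref{taupm1v1CUB0}, as $\delta_{+} = \langle D_{+} \cup E(D_{+})\rangle$. Here $D_{+}$ is the index-shifted version of $C_{1^{+}}$: it retains the chain curves lying off the segment $\{\alpha_{2g-3},\alpha_{2g-2},\alpha_{2g-1}\}$, and replaces the two short bounding pairs of $C_{1^{+}}$ by $[\alpha_{2g-4},\alpha_{2g-3},\alpha_{2g-2}]^{+}$ and $[\alpha_{2g-5},\alpha_{2g-4},\alpha_{2g-3}]^{+}$, both three-chains furnished by Proposition \ref{translemaprop1} and hence lying in $(\Cf \cup \Bfn)^{6}$. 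The long bounding pair of $C_{1^{+}}$ is replaced by its shifted counterpart, an odd-cardinality subchain of length $2g-1$; because this one now begins at an even index, it is exactly the curve $[\alpha_{2k}, \ldots, \alpha_{2k+(2g-2)}]^{-}$ provided by Proposition \ref{translemaprop3} in $(\Cf \cup \Bfn)^{2}$ (alternatively its $+$-partner from Proposition \ref{translemaprop2}, which also lies in a low expansion). Since $E(D_{+}) \subset \Ef \subset \Cf^{3}$ and the worst term in $D_{+}$ lies at level $6$, this gives $\delta_{+} \in (\Cf \cup \Bfn)^{7}$.

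Second I would exhibit $\tau_{\alpha_{2g-1}}(\alpha_{2g-2}) = \langle D_{+}' \cup E(D_{+}')\rangle$, where $D_{+}'$ is the shift of $C_{1^{+}}'$: the chain curves avoiding $\alpha_{2g-2}, \alpha_{2g-1}$, the \emph{pair} of bounding curves $[\alpha_{2g-3},\alpha_{2g-2},\alpha_{2g-1}]^{+}$ and $[\alpha_{2g-3},\alpha_{2g-2},\alpha_{2g-1}]^{-}$, and the intermediate curve $\delta_{+}$. Since this three-chain starts at the odd index $2g-3$ it is not a member of $\Bf_{0}$, but Proposition \ref{translemaprop1} again places both of its boundary curves in $(\Cf \cup \Bfn)^{6}$; together with $\delta_{+} \in (\Cf \cup \Bfn)^{7}$ and $E(D_{+}') \subset \Cf^{3}$, the whole determining set lies in $(\Cf \cup \Bfn)^{7}$, whence $\tau_{\alpha_{2g-1}}(\alpha_{2g-2}) \in (\Cf \cup \Bfn)^{8}$. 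As in Lemma \ref{taupm1v1CUB0}, I would check each $\langle \cdot \rangle$ by direct topological inspection, verifying that the intended curve is the unique essential curve disjoint from every listed one. The inverse twist $\tau_{\alpha_{2g-1}}^{-1}(\alpha_{2g-2})$ is obtained by the identical recipe after exchanging one $+$-bounding pair in $D_{+}$ for its $-$-partner, exactly as $C_{-}$ was formed from $C_{+}$; alternatively one may deduce it from equation (\ref{taupm}).

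The step I expect to be the main obstacle is the careful choice of $D_{+}$ and $D_{+}'$ so that the unique determination actually holds at the shifted position: the loss of chain symmetry forces one to account, puncture by puncture, for the auxiliary $\Ef$-curves $E(D_{+}), E(D_{+}')$ that seal off the punctured subsurfaces, and to confirm that each odd-indexed bounding pair (absent from $\Bf_{0}$, and the precise reason Propositions \ref{translemaprop1}--\ref{translemaprop3} were proved beforehand) enters at level at most $6$, so that the expansion count $6 \to 7 \to 8$ is preserved.
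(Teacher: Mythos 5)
Your proposal follows essentially the same route as the paper's proof: the paper's determining set $C_{2^{+}}$ is precisely the index-shift of $C_{1^{+}}$, with the long bounding pair $[\alpha_{0}^{i},\ldots,\alpha_{2g-2}]^{\pm}$ supplied by Propositions \ref{translemaprop2} and \ref{translemaprop3} and the odd-indexed three-chains by Proposition \ref{translemaprop1}, yielding $\gamma_{+}=\langle C_{2^{+}}\cup E(C_{2^{+}})\rangle\in(\Cf\cup\Bfn)^{7}$ and then $\tau_{\alpha_{2g-1}}(\alpha_{2g-2})=\langle C_{2^{+}}^{\prime}\cup E(C_{2^{+}}^{\prime})\rangle\in(\Cf\cup\Bfn)^{8}$, with the inverse twist handled by the same curve substitution you describe. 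Your only slip is descriptive and harmless given that your operative definition of $D_{+}$ is the index-shift itself: the shifted set actually retains $\alpha_{2g-3}$ and omits $\alpha_{2g-4}$ and $\alpha_{2g+1}$, rather than retaining exactly the chain curves off the segment $\{\alpha_{2g-3},\alpha_{2g-2},\alpha_{2g-1}\}$.
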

\begin{proof}
 \indent Let $i \in \{0, \ldots, n\}$. Taking the set $$C_{2^{+}} = \{\alpha_{2g}, \alpha_{0}^{i}, \alpha_{1}, \ldots, \alpha_{2g-5}, \alpha_{2g-3}, [\alpha_{2g-4}, \alpha_{2g-3}, \alpha_{2g-2}]^{+}, [\alpha_{2g-5},\alpha_{2g-4},\alpha_{2g-3}]^{+}, [\alpha_{0}^{i}, \ldots, \alpha_{2g-2}]^{\pm}\},$$
 then, by Lemma \ref{translemaprop1}, we get that $\gamma_{+} \ColonEqq \langle C_{2^{+}} \cup E(C_{2^{+}})\rangle \in (\Cf \cup \Bf_{0})^{7}$ (see Figure \ref{DefGammaplusBeforeRem2-11fig1}).
 \begin{figure}[h]
 \begin{center}
  \resizebox{10cm}{!}{\input{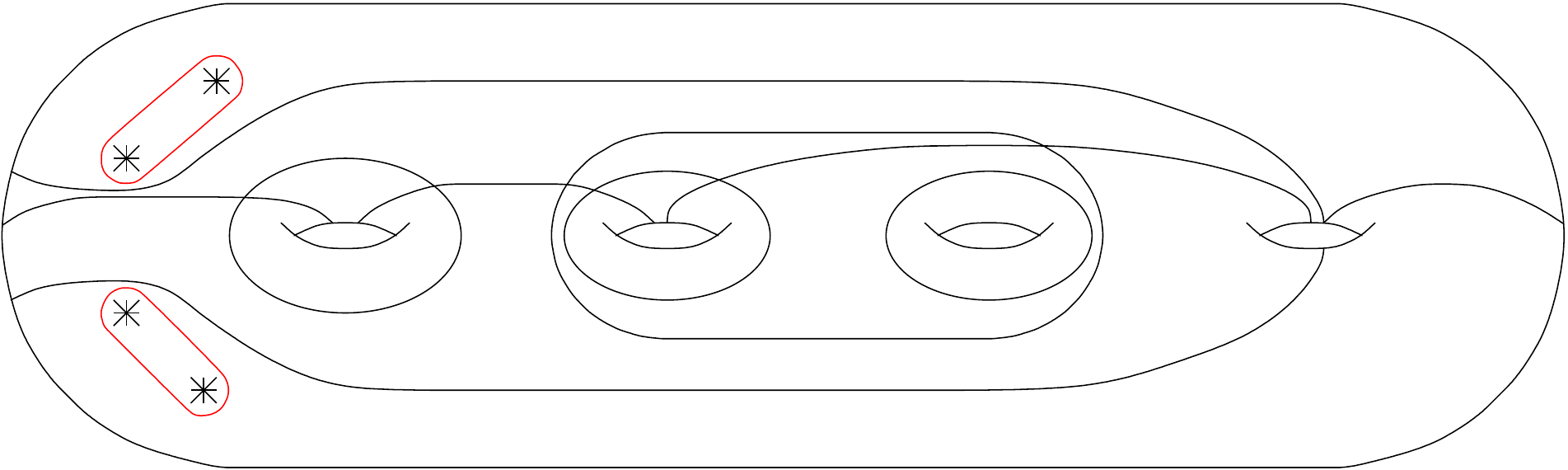_t}}\\[0.3cm]
  \resizebox{10cm}{!}{\input{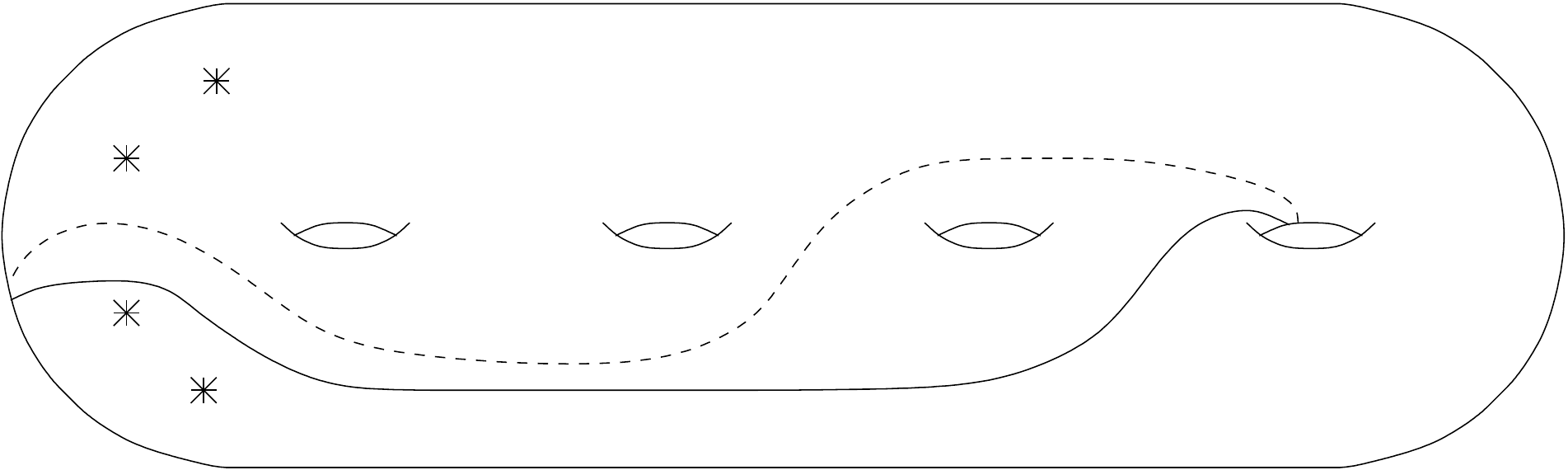_t}} \caption{The curve $\gamma_{+} \ColonEqq \langle C_{2^{+}} \cup E(C_{2^{+}})\rangle $.} \label{DefGammaplusBeforeRem2-11fig1}
 \end{center}
 \end{figure}\\
 Letting $$C_{2^{+}}^{\prime} = \{\alpha_{0}^{i}, \ldots, \alpha_{2g-4}, [\alpha_{2g-3},\alpha_{2g-2},\alpha_{2g-1}]^{+}, [\alpha_{2g-3},\alpha_{2g-2},\alpha_{2g-1}]^{-}, \gamma_{+}\},$$ we then have that $\tau_{\alpha_{2g-1}}(\alpha_{2g-2}) = \langle C_{2^{+}}^{\prime} \cup E(C_{2^{+}}^{\prime})\rangle \in (\Cf \cup \Bf_{0})^{8}$ (see Figure \ref{DefTauofAlpha2gminus1onAlpha2gminus2}).
 \begin{figure}[h]
  \begin{center}
   \resizebox{10cm}{!}{\input{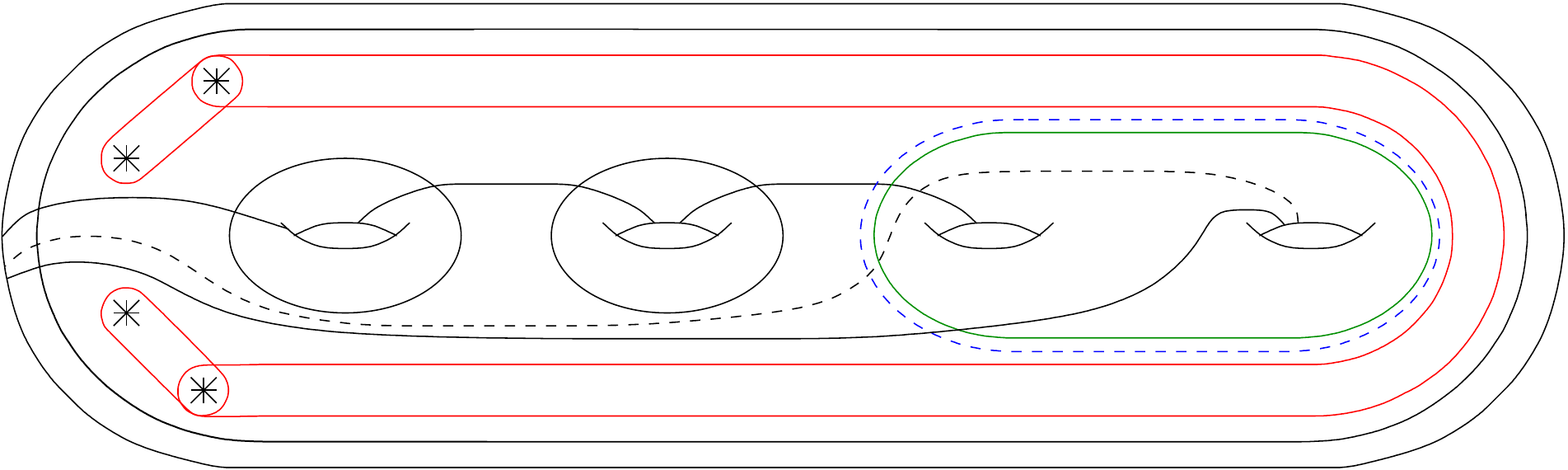_t}} \\[0.3cm]
   \resizebox{10cm}{!}{\input{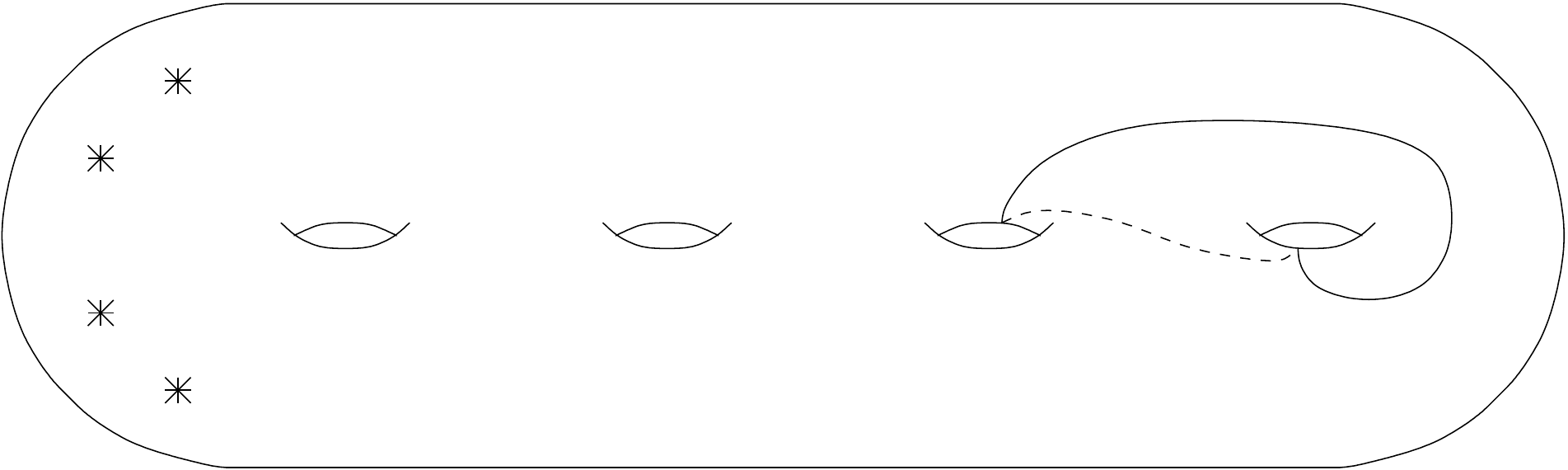_t}} \caption{The curve $\tau_{\alpha_{2g-1}}(\alpha_{2g-2}) = \langle C_{2^{+}}^{\prime} \cup E(C_{2^{+}}^{\prime})\rangle \in (\Cf \cup \Bf_{0})^{8}$.} \label{DefTauofAlpha2gminus1onAlpha2gminus2}
  \end{center}
 \end{figure}\\
 \indent As in the case for $\tau_{\alpha_{2g}}^{-1}(\alpha_{2g-1})$ of Lemma \ref{taupm1v1CUB0}, substituting the analogous curves, we also have that $\tau_{\alpha_{2g-1}}^{-1}(\alpha_{2g-2}) \in (\Cf \cup \Bf_{0})^{8}$.
\end{proof}
\indent Now, let $h_{i}$ be the mapping class obtained by cutting $S$ along $C_{i}$ and rotating the resulting (sometimes punctured) discs so that $h_{i}(\alpha_{i}) = \alpha_{i+2}$ (with $\alpha_{0} = \alpha_{0}^{i}$). See Figure \ref{SpheresWithHandlesFig}.
\begin{figure}
 \begin{center}
  \resizebox{10cm}{!}{\input{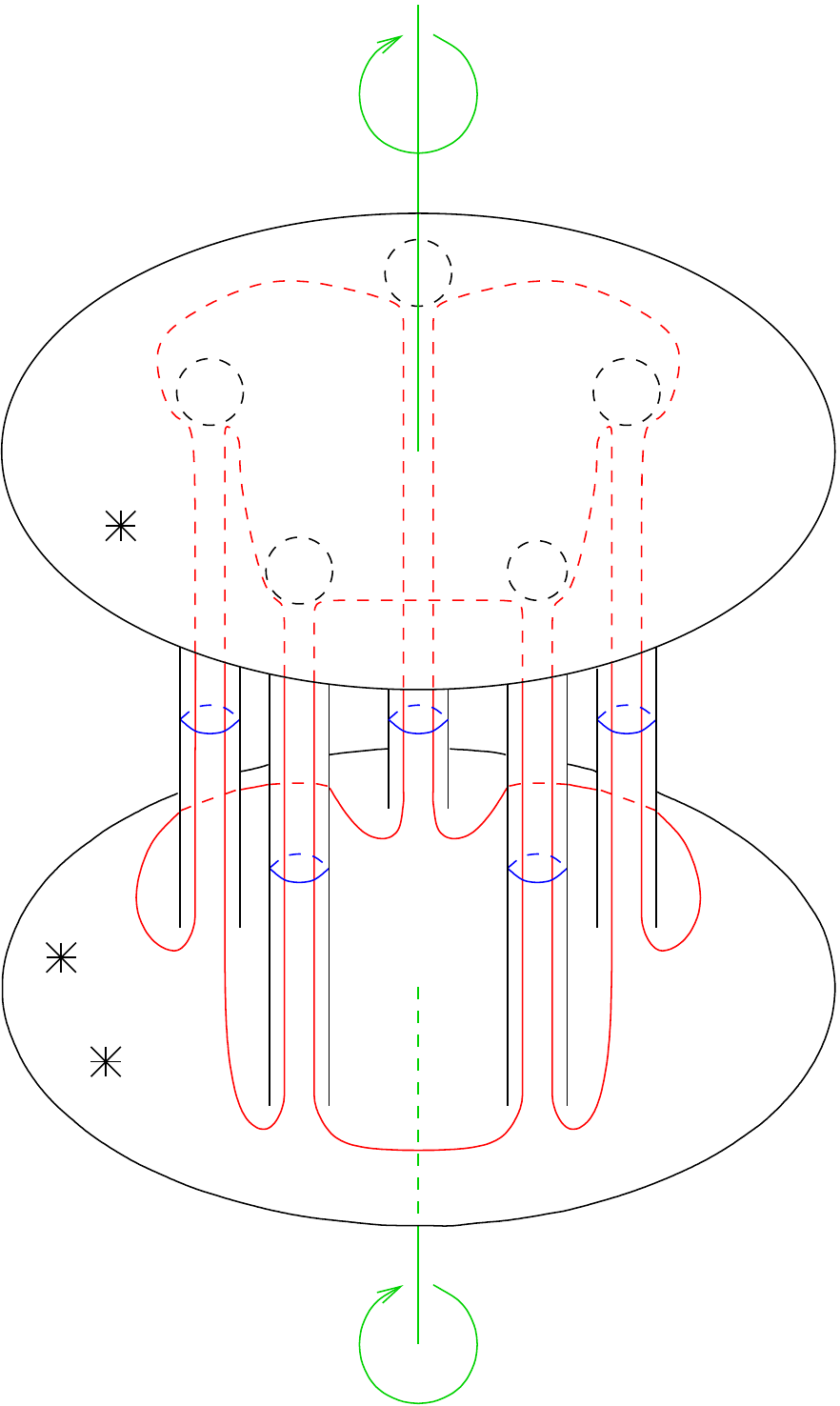_t}}
 \end{center}\caption{An example of $h_{i}$ in $S_{4,3}$, for which $h_{i}(S_{i(o)}^{+}) = S_{i(o)}^{+}$, $h_{i}(S_{i(o)}^{-}) = S_{i(o)}^{-}$, $h_{i}(S_{i(e)}^{+}) = S_{i(e)}^{+}$ and $h_{i}(S_{i(e)}^{-}) = S_{i(e)}^{-}$. \textbf{In particular} $h_{i}(\alpha_{j}) = \alpha_{j+2}$. Also, $h_{i} \in \stab{E^{i,i}}$ and if $k \in \{1, \ldots, g-1\}$, $h_{i}(\epsp{k}{1}{n-1}) = \epsp{k+1}{1}{n-1}$.}\label{SpheresWithHandlesFig}
\end{figure}
\begin{Rem}
 Note that $h_{i}(S_{i(o)}^{+}) = S_{i(o)}^{+}$, $h_{i}(S_{i(o)}^{-}) = S_{i(o)}^{-}$, $h_{i}(S_{i(e)}^{+}) = S_{i(e)}^{+}$ and $h_{i}(S_{i(e)}^{-}) = S_{i(e)}^{-}$. Also, $h_{i} \in \stab{E^{i,i}}$ and if $k \in \{1, \ldots, g-1\}$, $h_{i}(\epsp{k}{1}{n-1}) = \epsp{k+1}{1}{n-1}$.
\end{Rem}
\begin{proof}[\textbf{Proof of Claim 1:}] Using the same arguments as in Subsection \ref{subsec4-2} and Lemmas \ref{translemaprop1}, \ref{translemaprop2}, \ref{translemaprop3}, we can precompose by appropriate elements of the group $\langle h_{i} \rangle$ on $C_{i}$ to translate of the elements of $C_{j^{+}}$, $C_{j^{+}}^{\prime}$ and the corresponding sets for the negative exponents of the Dehn twists, and following the procedure for $\tau_{\alpha_{2g}}^{\pm 1}(\alpha_{2g-1}), \tau_{\alpha_{2g-1}}^{\pm 1}(\alpha_{2g-2}) \in (\Cf \cup \Bf_{0})^{8}$ we have that $\tau_{\alpha_{j}}^{\pm 1}(\alpha_{j-1}) \in (\Cf \cup \Bf_{0})^{8}$. Therefore $\tau_{\Cf}^{\pm 1}(\Cf) \subset (\Cf \cup \Bf_{0})^{8}$.
\end{proof}
\subsection{Proof of Claim 2: $\tau_{\Cf}^{\pm 1}(\Cf \cup \Bf_{0}) \subset (\Cf \cup \Bf_{0})^{12}$}\label{chap2sec5}
\indent Let $\alpha \in \Cf$ and $\beta \in \Bf_{0}$; if $i(\alpha,\beta) = 0$, we have that $\tau_{\alpha}^{\pm 1}(\beta) = \beta \in \Cf \cup \Bf_{0}$. So, we assume this is not the case. Now, to prove the claim we first see that every curve in $\Bf_{0}$ can be taken to be a curve uniquely determined by a set $C \cup E \cup B$ such that $C \subset \Cf$, $E \subset \Ef$, $B \subset \Bf$, and with every element in $B$ disjoint from $\alpha$.
Since $\tau_{\alpha}^{\pm 1}$ are mapping classes, we get that $\tau_{\alpha}^{\pm 1}(\beta) = \langle \tau_{\alpha}^{\pm 1}(C \cup E \cup B) \rangle = \langle \tau_{\alpha}^{\pm 1}(C) \cup \tau_{\alpha}^{\pm 1}(E) \cup B) \rangle$. Using the result in Claim 1 we get that $\tau_{\alpha}^{\pm 1}(C) \subset (\Cf \cup \Bf_{0})^{8}$, and by Proposition \ref{OnlyCUB0} $\tau_{\alpha}^{\pm 1}(E) \subset (\Cf \cup \Bf_{0})^{11}$. Therefore $\tau_{\alpha}^{\pm 1}(\beta) \in (\Cf \cup \Bf_{0})^{12}$.\\
\indent For a detailed account of the sets $C$, $E$ and $B$, see \cite{Thesis}.\\
\subsection{Proof of Claim 3: $\tau_{\zeta}^{\pm 1}(\Cf) \subset (\Cf \cup \Bf_{0})^{8}$}\label{chap2sec6}
\indent Recall $\zeta = \beta_{\{2g-2,2g-1,2g\}}^{+}$, which is disjoint from every element in $\Cf \backslash \{\alpha_{2g-3},\alpha_{2g+1}\}$. This implies we only need to prove that $\tau_{\zeta}^{\pm 1}(\alpha_{2g-3}), \tau_{\zeta}^{\pm 1}(\alpha_{2g+1}) \in (\Cf \cup \Bf_{0})^{8}$.\\
\indent We proceed as in Claim 1, using the following ordered maximal closed chain $$\gamma_{0} = \alpha_{0}^{1},\gamma_{1} = \alpha_{1}, \ldots, \gamma_{2g-5} = \alpha_{2g-5}, \gamma_{2g-4} = \beta_{\{2g-4,2g-3,2g-2\}}^{-},$$ $$\gamma_{2g-3} = \alpha_{2g-1}, \gamma_{2g-2} = \alpha_{2g-2}, \gamma_{2g-1} = \alpha_{2g-3}, \gamma_{2g} = \zeta, \gamma_{2g+1} = \alpha_{2g+1},$$ (see Figure \ref{ThirdChain1}) and proving first that $\tau_{\zeta}(\alpha_{2g-3}) \in (\Cf \cup \Bf_{0})^{8}$.
\begin{figure}[h]
\begin{center}
 \includegraphics[width=10cm]{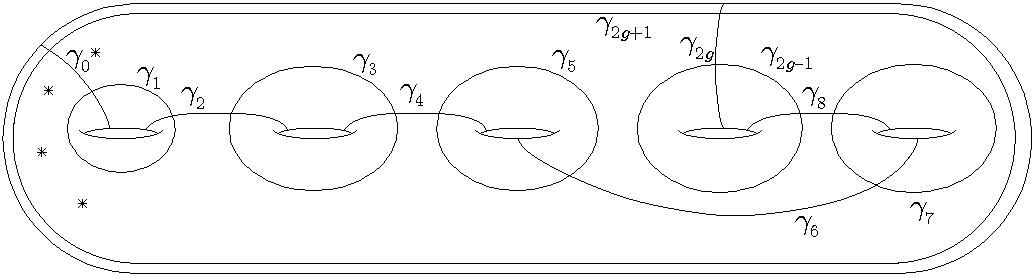} \caption{The ordered maximal closed chain for $S = S_{5,4}$, which is used for the case of $\tau_{\zeta}^{\pm 1}(\alpha_{2g-3})$.} \label{ThirdChain1}
\end{center}
\end{figure}\\
\indent Using the set 
\begin{center}
 \begin{tabular}{rcl}
  $C_{1^{+}}$ & $=$ & $\{\alpha_{2g+1}, \alpha_{1}, \alpha_{2}, \ldots, \alpha_{2g-5}, \beta_{\{2g-4,2g-3,2g-2\}}^{-}, \alpha_{2g-2},$\\
   & & $[\alpha_{2g-3},\alpha_{2g-2},\alpha_{2g-1}]^{+}, \beta_{\{2,\ldots, 2g-4\}}^{+}, [\alpha_{1}, \ldots, \alpha_{2g-1}]^{+}\}$\\
   & & for genus $g \geq 4$,\\
   & $=$ & $\{\alpha_{7}, \alpha_{1}, \alpha_{2}, \beta_{\{2,3,4\}}^{-}, \alpha_{4}, [\alpha_{3},\alpha_{4},\alpha_{5}]^{+}, [\alpha_{1}, \ldots, \alpha_{5}]^{+}\}$\\
   & & for genus $g=3$,
 \end{tabular}
\end{center}
and Propositions \ref{translemaprop1} and \ref{translemaprop2} we obtain the curve $\gamma_{+} = \langle C_{1^{+}} \cup \Df \rangle \in (\Cf \cup \Bf_{0})^{7}$. Then, using the set $$C_{1^{+}}^{\prime} = \{\alpha_{1}, \ldots, \alpha_{2g-5}, \beta_{\{2g-4,2g-3,2g-2\}}^{-}, \alpha_{2g-2}, \alpha_{2g-3}, \zeta, \alpha_{2g}\},$$ we obtain the curve $\gamma_{+}^{\prime} = \langle \Cff \cup C_{1^{+}}^{\prime} \rangle \in (\Cf \cup \Bf_{0})^{1}$. Finally, using the set $$C_{1^{+}}^{\biprime} = \{\alpha_{1}, \ldots, \alpha_{2g-5}, \beta_{\{2g-4,2g-3,2g-2\}}^{-}, \alpha_{2g-1}, \gamma_{+}^{\prime},\alpha_{2g}, \gamma_{+}\},$$ we obtain that $\tau_{\zeta}(\alpha_{2g-3}) = \langle C_{1^{+}}^{\biprime} \cup \Cff \rangle \in (\Cf \cup \Bf_{0})^{8}$.\\
\indent For $\tau_{\zeta}^{-1}(\alpha_{2g-3})$ we proceed analogously, substituting the appropriate curves, getting that $\tau_{\zeta}^{-1}(\alpha_{2g-3}) \in (\Cf \cup \Bf_{0})^{8}$.\\
\indent To prove that $\tau_{\zeta}^{\pm 1}(\alpha_{2g+1}) \in (\Cf \cup \Bf_{0})^{8}$ we proceed analogously, using the ordered maximal closed chain $$\gamma_{0} = \alpha_{2g-2}, \gamma_{1} = \alpha_{2g-1}, \gamma_{2} = \beta_{\{2g-4,2g-3,2g-2\}}^{-}, \gamma_{3} = \alpha_{2g-5},$$ $$\gamma_{4} = \alpha_{2g-6}, \ldots, \gamma_{2g-4} = \alpha_{2}, \gamma_{2g-3} = \alpha_{1}, \gamma_{2g-2} = \alpha_{0}^{1}, \gamma_{2g-1} = \alpha_{2g+1}, \gamma_{2g} = \zeta, \gamma_{2g+1} = \alpha_{2g-3}$$ (see Figure \ref{ThirdChain2}). Note that this closed chain as a set, is the same closed chain as the previous case but with the order reversed.
\begin{figure}[h]
\begin{center}
 \includegraphics[width=10cm]{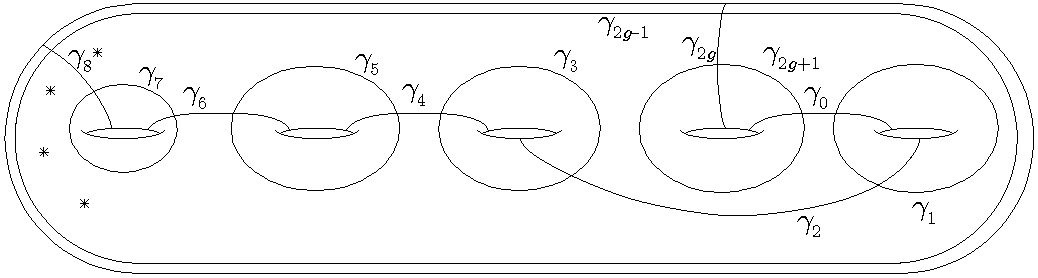} \caption{The ordered maximal closed chain for $S = S_{5,4}$, which is used for the case of $\tau_{\zeta}^{\pm 1}(\alpha_{2g+1})$.} \label{ThirdChain2}
\end{center}
\end{figure}\\
\indent We have then $\tau_{\zeta}^{\pm 1}(\alpha_{2g+1}) \in (\Cf \cup \Bf_{0})^{8}$.
\subsection{Proof of Claim 4: $\tau_{\zeta}^{\pm 1}(\Cf \cup \Bf_{0}) \subset (\Cf \cup \Bf_{0})^{10}$}\label{chap2sec7}
\indent Given that $\zeta$ is disjoint from every curve $\beta \in \Bf_{0}$ of the form $\beta_{\{2l, \ldots, 2(l+k)\}}^{-}$ for some $l \in \nat$ and $k \in \mathbb{Z}^{+}$, it follows that $\tau_{\zeta}^{\pm 1}(\beta_{\{2l, \ldots, 2(l+k)\}}^{-}) = \beta_{\{2l, \ldots, 2(l+k)\}}^{-} \in \Bf_{0}$. So, we need to prove the result for $\beta = \beta_{\{2l, \ldots, 2(l+k)\}}^{+}$; we do so dividing into several cases in the following way (See Figure \ref{ExamplesChap2Claim4Fig1} for examples):
\begin{enumerate}
 \item $\beta$ is of the form $\beta_{\{2l, \ldots, 2(l+k)\}}^{+}$ for $l < l+k$.
 \item $\beta$ is of the form $\beta_{\{2l, \ldots, 2(l+k)\}}^{+}$ for $l > l+k$.
\end{enumerate}
\begin{figure}[h]
 \begin{center}
  \resizebox{9cm}{!}{\input{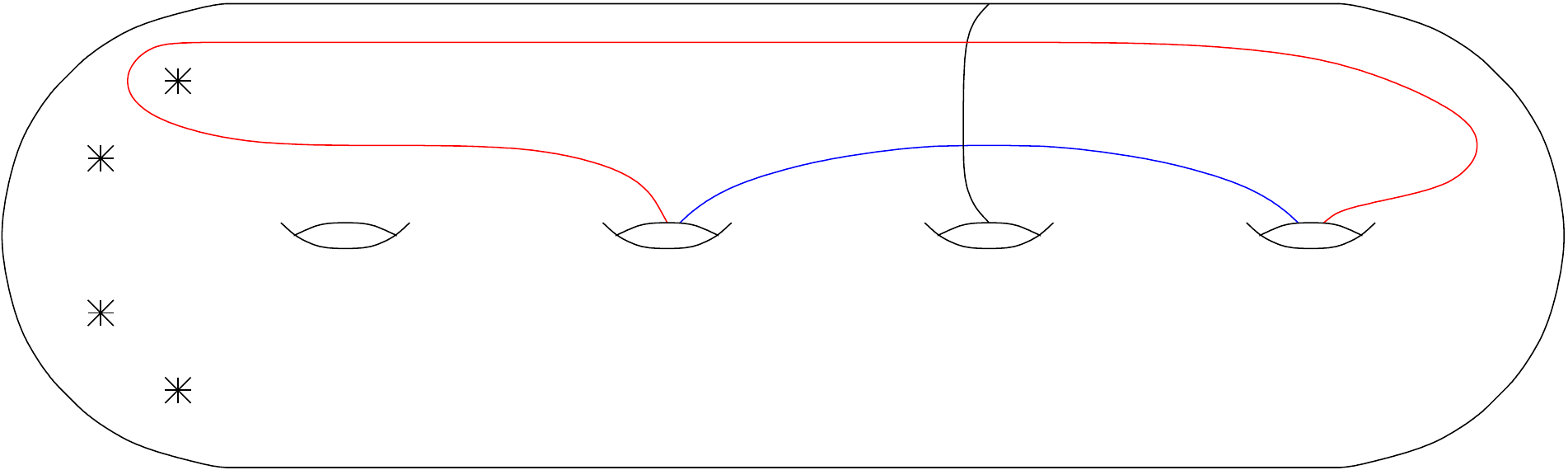_t}} \caption{Examples of $\beta$ for the first case (in blue), the second case (in red), and how they intersect $\zeta$.} \label{ExamplesChap2Claim4Fig1}
 \end{center}
\end{figure}
\indent To prove the claim, as in Claim 2 in Subsection \ref{chap2sec5}, we just have to remember that every curve in $\Bf_{0}$ can be taken to be a curve uniquely determined by a set $C \cup E \cup B$ such that $C \subset \Cf$, $E \subset \Ef$, $B \subset \Bf$, and with every element in $B$ disjoint from $\zeta$. For a detailed account of these sets see \cite{Thesis}.
\subsection{Proof of Claim 5: $\eta_{\Hff}^{\pm 1}(\Cf) \subset (\Cf \cup \Bf_{0})^{7}$}\label{chap2sec8}
\indent Recall $\Hff \ColonEqq \{\epsilon^{i-2,i} \in \Df: 2 \leq i \leq n\}$. Given that $i(\alpha,\eps{i-2}{i}) = 0$ for all $\alpha \in \Cf \backslash \{\alpha_{0}^{i-1}\}$, to prove the claim we just need to prove that $\eta_{\eps{i-2}{i}}^{\pm 1}(\alpha_{0}^{i-1}) \in (\Cf \cup \Bf_{0})^{7}$. We do so following \cite{Ara2}.\\
\indent Let $i \in \{1,\ldots,n\}$; we define (see Figure \ref{HalfTwist1}) $$\beta^{i} = \langle \{\alpha_{1}\} \cup \{\alpha_{3}, \ldots, \alpha_{2g+1}\} \cup \{\beta_{\{4, \ldots, 2g\}}^{\pm}\} \cup \{\beta_{0,1,2}^{j,+}: j < i\} \cup \{\beta_{0,1,2}^{k,-}: i \leq k\}\rangle \in (\Cf \cup \Bf_{0})^{5}.$$
\begin{figure}[h]
\begin{center}
 \includegraphics[width=10cm]{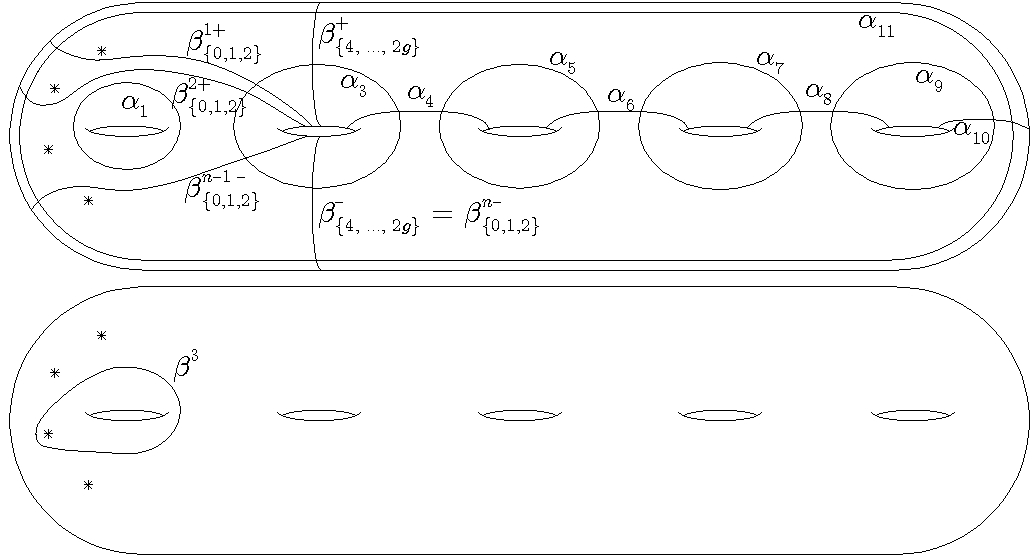} \caption{Above, the curves that uniquely determine $\beta^{3}$; below, the curve $\beta^{3}$.} \label{HalfTwist1}
\end{center}
\end{figure}\\
\indent Then, for $i \in \{1, \ldots, n\}$, we define (see Figure \ref{HalfTwist2}) $$\gamma_{+}^{i} = \langle \{\alpha_{0}^{0},\alpha_{1},\alpha_{2}\} \cup \{\alpha_{4}, \ldots, \alpha_{2g}\} \cup \{\beta_{\{4,\ldots,2g\}}^{\pm}\} \cup \{\beta^{j}: i \neq j\}\rangle \in (\Cf \cup \Bf_{0})^{6},$$ $$\gamma_{-}^{i} = \langle \{\alpha_{0}^{n}, \alpha_{1},\alpha_{2}\} \cup \{\alpha_{4}, \ldots, \alpha_{2g}\} \cup \{\beta_{\{4,\ldots, 2g\}}^{\pm}\} \cup \{\beta^{j}:i \neq j\}\rangle \in (\Cf \cup \Bf_{0})^{6}.$$
\begin{figure}[h]
\begin{center}
 \resizebox{10cm}{!}{\input{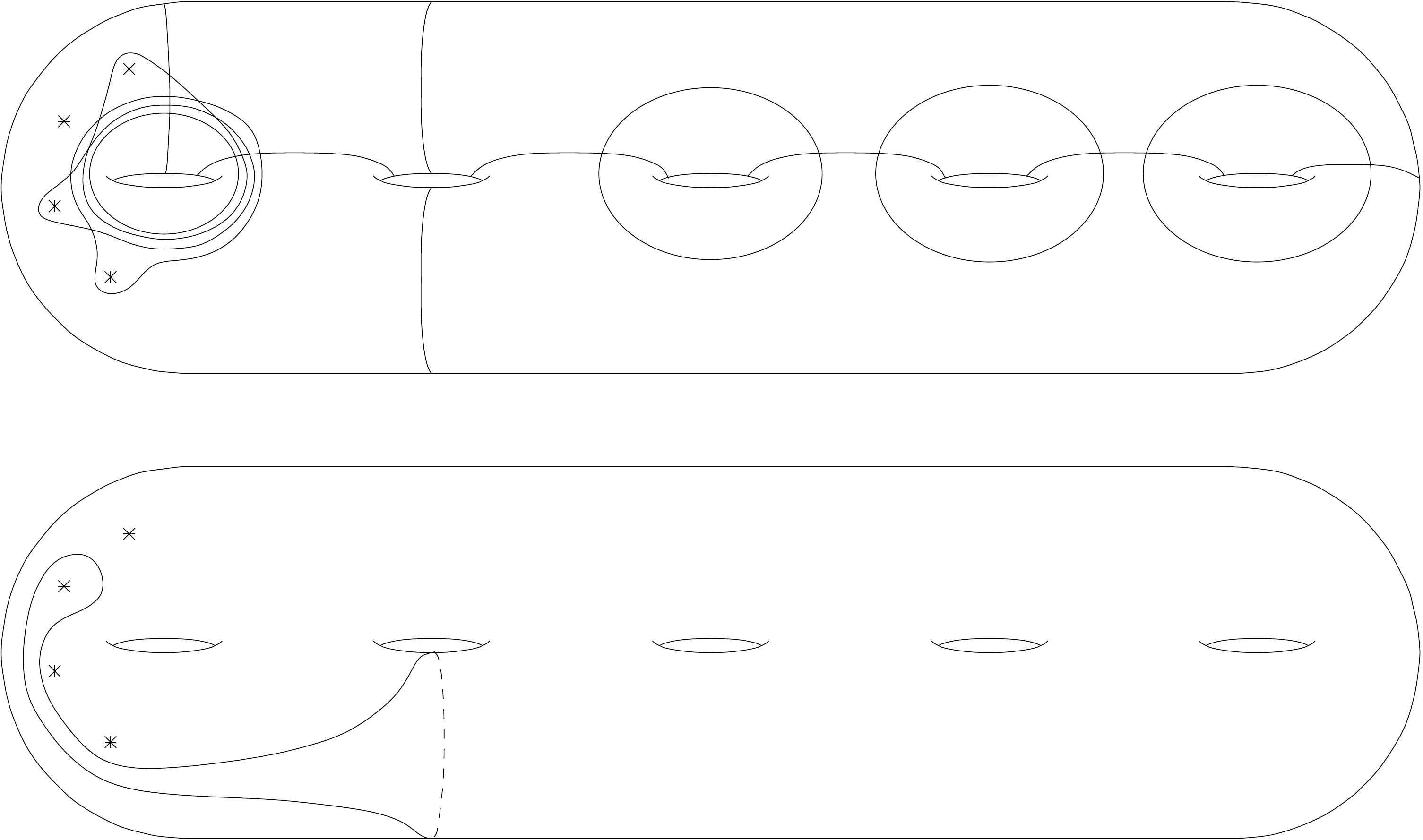_t}} \caption{Above, the curves that uniquely determine $\gamma_{+}^{2}$; below, the curve $\gamma_{+}^{2}$.} \label{HalfTwist2}
\end{center}
\end{figure}\\
\indent Finally, for $i \in \{2, \ldots, n\}$, we get that $\eta_{\eps{i-2}{i}}(\alpha_{0}^{i-1}) = \langle (\Cf \backslash \{\alpha_{2g+1},\alpha_{0}^{i-1},\alpha_{1}\}) \cup \{\gamma_{+}^{i-1}\}\rangle$ and $\eta_{\eps{i-2}{i}}^{-1}(\alpha_{0}^{i-1}) = \langle (\Cf \backslash \{\alpha_{2g+1}, \alpha_{0}^{i-1}, \alpha_{1}\}) \cup \{\gamma_{-}^{i}\}\rangle$ (see Figure \ref{HalfTwist3}). Therefore $\eta_{\Hff}^{\pm 1}(\Cf) \subset (\Cf \cup \Bf_{0})^{7}$.
\begin{figure}[h]
\begin{center}
 \includegraphics[width=10cm]{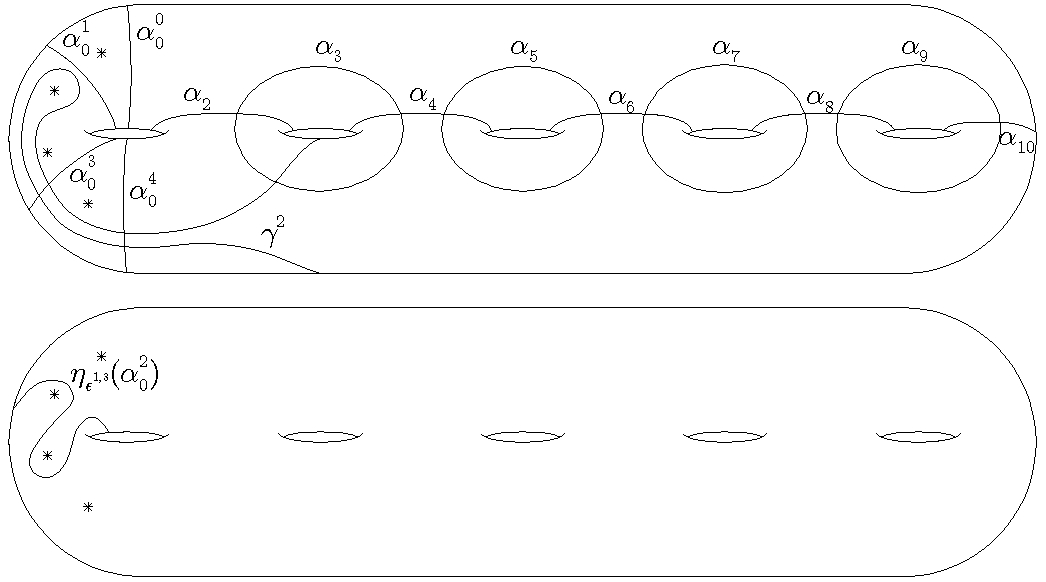} \caption{Above, the curves that uniquely determine $\eta_{\eps{1}{3}}(\alpha_{0}^{2})$; below the curve $\eta_{\eps{1}{3}}(\alpha_{0}^{2})$.} \label{HalfTwist3}
\end{center}
\end{figure}
\subsection{Proof of Claim 6: $\eta_{\Hff}^{\pm 1}(\Cf \cup \Bf_{0}) \subset (\Cf \cup \Bf_{0})^{11}$}\label{chap2sec9}
\indent Let $\beta \in \Bf_{0}$, as such it is of the form $\beta_{\{2l, \ldots, 2(l+k)\}}^{\pm}$ for some $l \in \nat$ and some $k \in \mathbb{Z}^{+}$.\\
\indent If $0 < l < l+k$, then $\beta$ and $\eps{i}{i+2}$ are disjoint for $i \in \{0, \ldots, n-2\}$. This implies that $\eta_{\eps{i}{i+2}}^{\pm 1}(\beta) = \beta$.\\
\indent If $l = 0$ and $k \in \{2, \ldots, g-1\}$, either we have that (see Figure \ref{ExamplesChap2Claim6Fig1}) $$\beta = \left\langle (\Cf \backslash \{\alpha_{0}^{0}, \alpha_{2g+1}, \alpha_{2k+1}\}) \cup \left(\bigcup_{l \in \{k+1, \ldots, g\}} \epsp{l}{1}{n-1}\right)\right\rangle,$$
\begin{figure}[h]
 \begin{center}
  \resizebox{9cm}{!}{\input{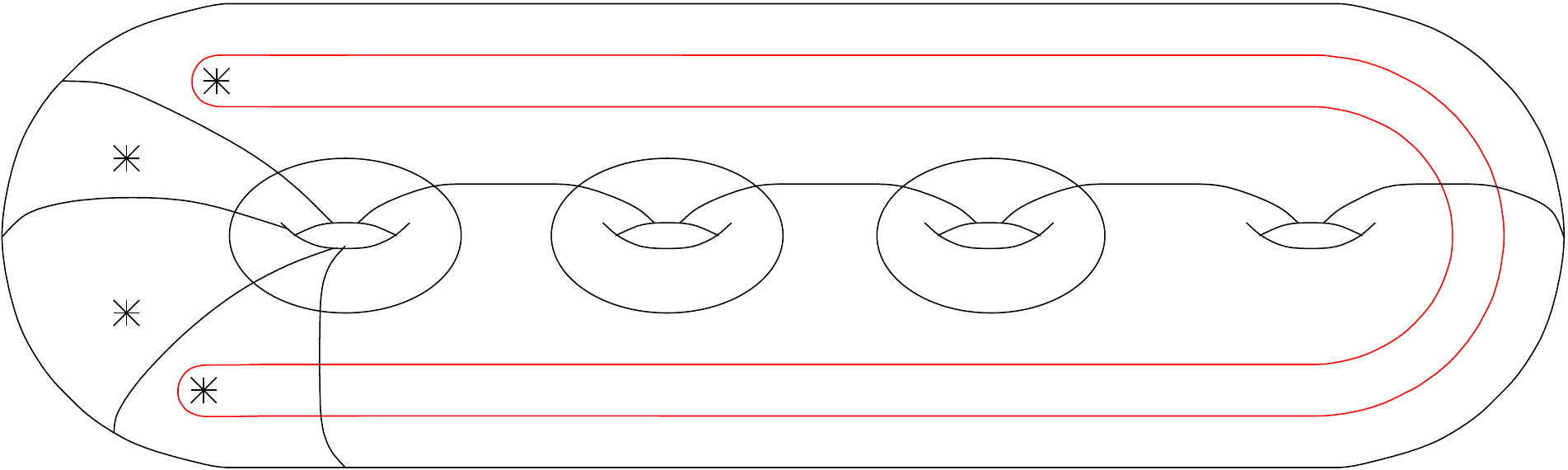_t}} \\[0.2cm]
  \resizebox{9cm}{!}{\input{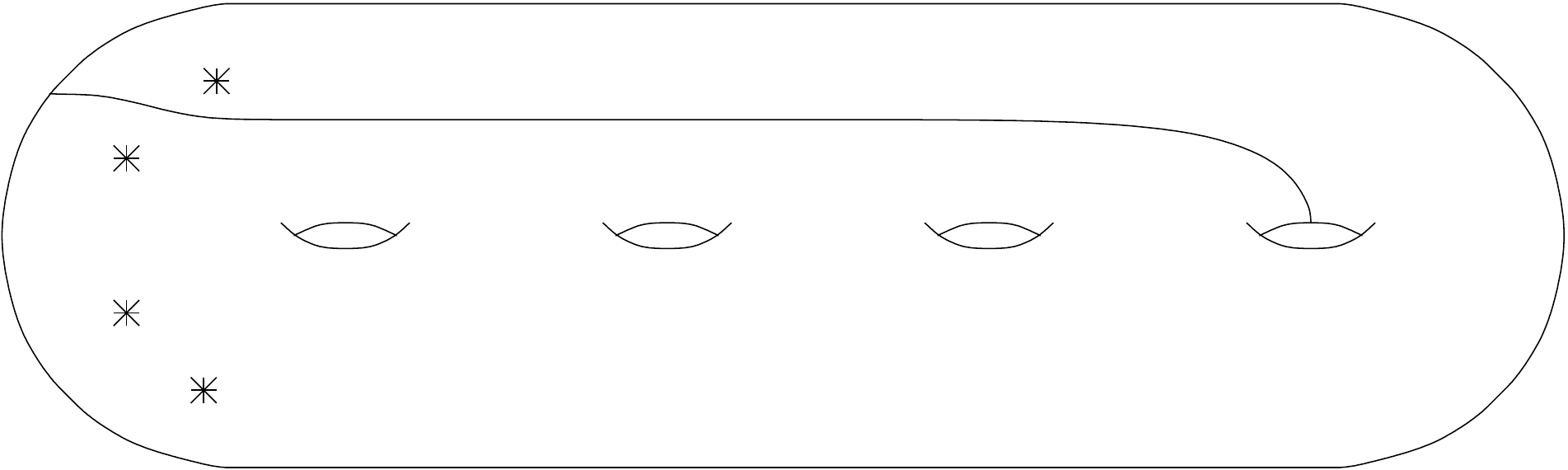_t}} \caption{Examples of $\beta = \beta_{\{2l, \ldots, 2(l+k)\}}^{+}$ with $l = 0$ and $k = 3$. The elements in $\Ef$ are coloured red.} \label{ExamplesChap2Claim6Fig1}
 \end{center}
\end{figure}\\
or we have that (see Figure \ref{ExamplesChap2Claim6Fig3}) $$\beta = \left\langle (C_{1} \backslash \{\alpha_{2k+1}, \alpha_{2g+1}\}) \cup \{\alpha_{0}^{0}\} \cup E^{1,1} \cup \left(\bigcup_{l \in \{k+1, \ldots, g\}} \epsp{l}{1}{n-1}\right)\right\rangle.$$ 
\begin{figure}[h]
 \begin{center}
  \resizebox{9cm}{!}{\input{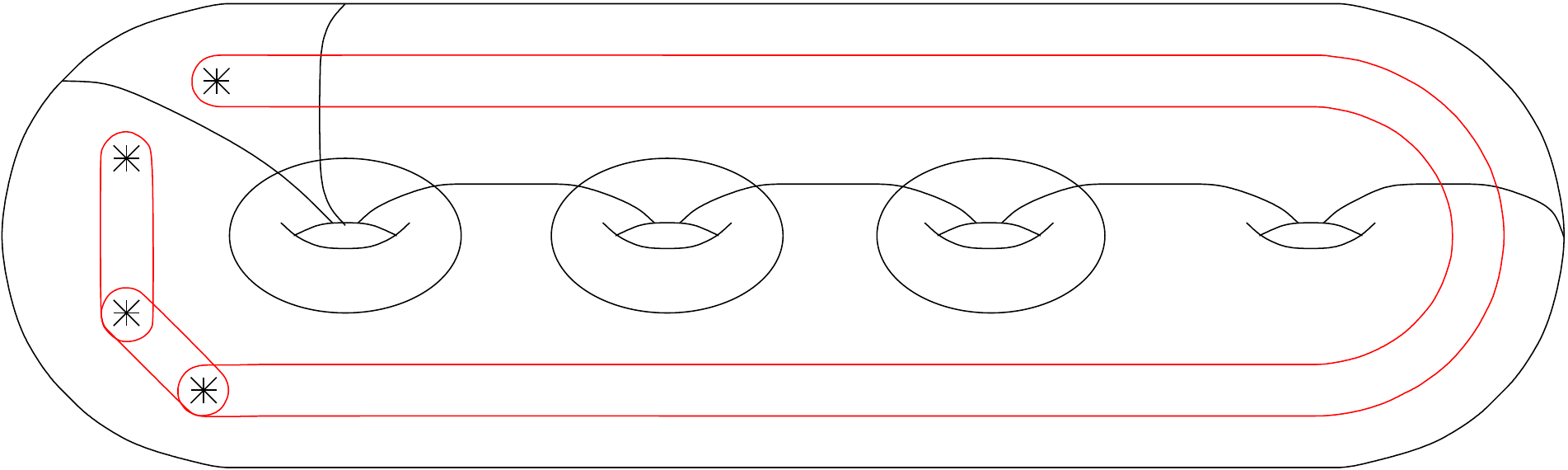_t}} \\[0.2cm]
  \resizebox{9cm}{!}{\input{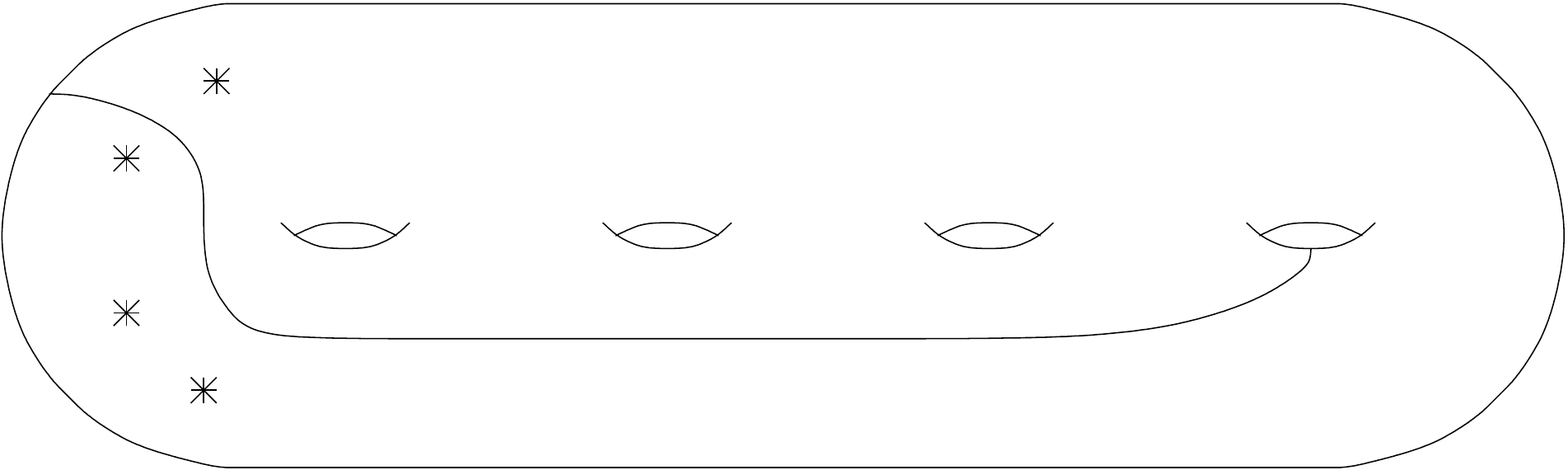_t}} \caption{Examples of $\beta = \beta_{\{2l, \ldots, 2(l+k)\}}^{-}$ with $l = 0$ and $k = 3$. The elements in $\Ef$ are coloured red.} \label{ExamplesChap2Claim6Fig3}
 \end{center}
\end{figure}\\
\indent Using Claim 5 and Proposition \ref{OnlyCUB0}, we obtain that $\eta_{\eps{i}{i+2}}^{\pm 1}(\beta) \in (\Cf \cup \Bf_{0})^{11}$ for $i \in \{0, \ldots, n-2\}$.\\
If $\beta$ is of the form $\beta_{\{2l, \ldots, 2(l+k)\}}^{\pm}$ for $l > l+k$, following the proof of Claim 2, we have that $\beta = \langle C \cup E \cup B\rangle$ with $C \subset \Cf$, $E \subset \Ef$ and $B$ a singleton of a curve disjoint from $\eps{i}{i+2}$. Using the result from Claim 5 we get that $\eta_{\eps{i}{i+2}}^{\pm 1}(C) \subset (\Cf \cup \Bf_{0})^{7}$, and applying Proposition \ref{OnlyCUB0} we obtain that $\eta_{\eps{i}{i+2}}^{\pm 1}(E) \subset (\Cf \cup \Bf_{0})^{10}$. Finally, this implies that $\eta_{\eps{i}{i+2}}^{\pm 1}(\beta) \in (\Cf \cup \Bf_{0})^{11}$, for $i \in \{0, \ldots, n-2\}$.
\section{Rigid sets}\label{chap3}
\indent In this section we suppose $S = S_{g,n}$ with genus $g \geq 3$, and $n \geq  0$ punctures. Here we reintroduce the finite rigid set from \cite{Ara1} (see Subsections \ref{chap3sec1subsec1} for the closed surface case and \ref{chap3sec1subsec2} for the punctured surface case, bellow) and prove Theorem \ref{Xexhausts}.
\subsection{$\X(S)$ for closed surfaces}\label{chap3sec1subsec1}
\indent Let $\Cf$ and $\Bf$ be as in Section \ref{chap1}, and $J$ be a subinterval (modulo $2g+2$) of $\{0, \ldots, 2g+1\}$ such that $|J| < 2g -1$.\\
\indent If $J = \{j, \ldots, j+ 2k-1\}$ for some $j \in \nat$ and $k \in \mathbb{Z}^{+}$, we get the following curve: $$\sigma_{J} \ColonEqq \langle \{\alpha_{j}, \ldots, \alpha_{j+2k-1}\} \cup \{\alpha_{j + 2k +1}, \ldots, \alpha_{j-2}\}\rangle.$$ For examples see Figure \ref{ExamplesSigmaSec3fig1}. 
\begin{figure}[h]
 \begin{center}
  \resizebox{10cm}{!}{\input{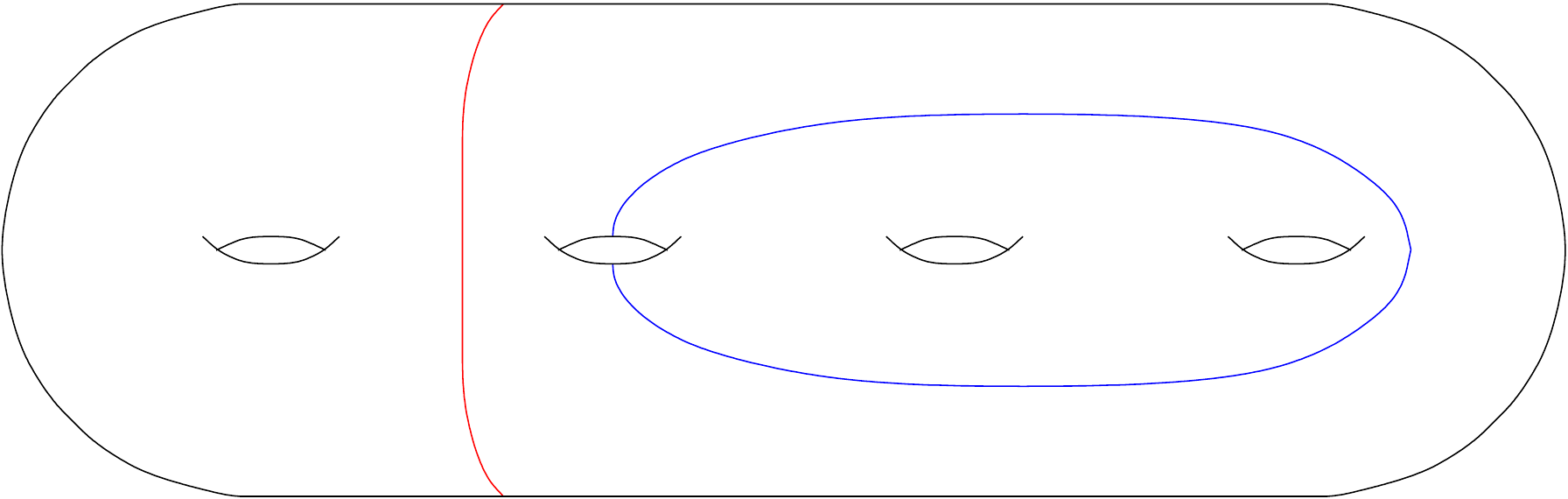_t}} \caption{Examples of $\sigma_{\{0,1\}}$ in red, and $\sigma_{\{4,5,6,7\}}$ in blue.}\label{ExamplesSigmaSec3fig1}
 \end{center}
\end{figure}\\
This way, we define the following set: $$\Sf \ColonEqq \{\sigma_{J} : |J| = 2k \hspace{0.2cm} \mathrm{for} \hspace{0.2cm} \mathrm{some} \hspace{0.2cm} k \in \mathbb{Z}^{+}\}.$$
\indent If $J = \{j, \ldots, j + 2k\}$ for some $j \in \nat$ and $k \in \mathbb{Z}^{+}$, let we get the following curves:$$\mu_{j+2k+1, J}^{+} \ColonEqq \langle \{\beta_{J}^{+},\alpha_{j+2k+1}\} \cup \{\alpha_{j}, \ldots, \alpha_{j+2k-1}\} \cup \{\beta_{\{j+2, \ldots, j+2k+2\}}^{-}\} \cup \{\alpha_{j+2k+3}, \ldots, \alpha_{j-2}\} \rangle,$$ $$\mu_{j+2k+1, J}^{-} \ColonEqq \langle \{\beta_{J}^{-},\alpha_{j+2k+1}\} \cup \{\alpha_{j}, \ldots, \alpha_{j+2k-1}\} \cup \{\beta_{\{j+2, \ldots, j+2k+2\}}^{+}\} \cup \{\alpha_{j+2k+3}, \ldots, \alpha_{j-2}\} \rangle.$$ 
\indent For examples see Figure \ref{ExamplesMuSec3fig1}. 
\begin{figure}[h]
 \begin{center}
  \resizebox{10cm}{!}{\input{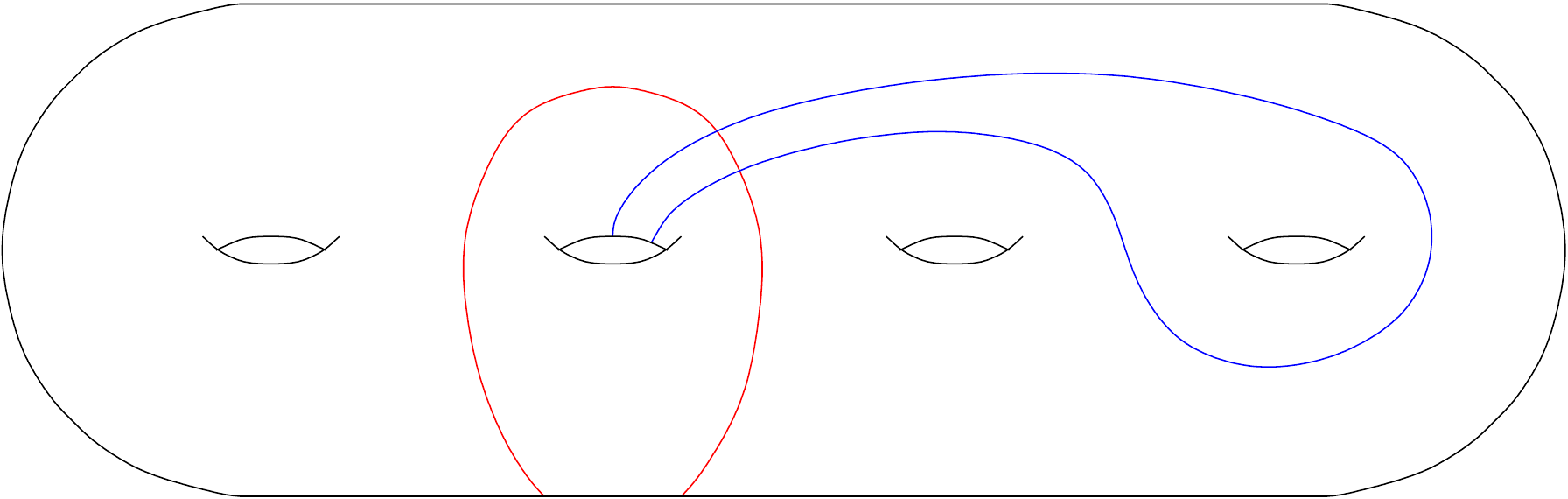_t}} \caption{Examples of $\mu_{3,\{0,1,2\}}$ in red, and $\sigma_{7,\{4,5,6\}}$ in blue.}\label{ExamplesMuSec3fig1}
 \end{center}
\end{figure}\\
\indent This way, we define the following set: $$\Mf \ColonEqq \{\mu_{i,J}^{\pm}: J = \{j, \ldots, j+2k\}  \hspace{0.2cm} \mathrm{for} \hspace{0.2cm} \mathrm{some} \hspace{0.2cm} k \in \mathbb{Z}^{+}, i = j+2k+1\}$$
\indent Finally, we have the set $$\X(S) \ColonEqq \Cf \cup \Bf \cup \Sf \cup \Mf$$
\indent Recall that, as was mentioned in the Introduction, this set was proved to be rigid in \cite{Ara1}, and by construction has trivial pointwise stabilizer in $\EMod{S}$.
\subsection{$\X(S)$ for punctured surfaces}\label{chap3sec1subsec2}
\indent Let $\Cf$, $\Bf_{0}$, $\Df$ and $\Bf_{T}$ be as in Section \ref{chap2}.\\
\indent For $0 \leq i \leq j \leq n$, we denote by $N_{1}^{i, \hspace{0.05cm} j}$ and $N_{2g+1}^{i, \hspace{0.05cm} j}$ closed regular neighbourhoods of the chains $\{\alpha_{0}^{i}, \alpha_{0}^{j}, \alpha_{1}\}$ and $\{\alpha_{0}^{i}, \alpha_{0}^{j}, \alpha_{2g+1}\}$ respectively.\\
\indent Note that $\N{1}{i}{j}$ is a two-holed torus if $j - i \geq 1$ (one of the boundary components will not be essential if $j - i = 1$). Also, $S \backslash \N{1}{i}{j}$ is the disjoint union of a subsurface homeomorphic to an at least once-punctured open disc, and a subsurface homeomorphic to $S_{g-1,n-(j-1)+1}$.\\
\indent If $j - i > 1$, one of the boundary components of $\N{1}{i}{j}$ is the curve $\eps{i}{j}$. On the other hand, for $0 \leq i \leq j \leq n$, we denote by $\sigma_{1}^{i, \hspace{0.05cm} j}$ the boundary component of $\N{1}{i}{j}$ such that one of the connected components of $S \backslash \{\sigma_{1}^{i, \hspace{0.03cm} j}\}$ is homeomorphic to $S_{1,j-i + 1}$.\\
\indent We denote by $\sigma_{2g+1}^{i, \hspace{0.05cm} j}$ the analogous boundary curves of $\N{2g+1}{i}{j}$ (whenever they are essential).\\
\indent Then, we define $$\Sf_{T} \ColonEqq \{\sigma_{l}^{i, \hspace{0.05cm} j} : l \in \{1,2g+1\}, 0 \leq i \leq j \leq n\}$$
\indent Now, let $J$ be a subinterval of $\{0, \ldots, 2g+1\}$ (modulo $2g+2$) such that $|J| \leq 2g$.\\
\indent If $J = \{i, \ldots, i + 2k-1\}$ for some $k \in \mathbb{Z}^{+}$, let $\sigma_{J} = [\alpha_{i}, \ldots, \alpha_{i+2k-1}]$ (with $\alpha_{0} = \alpha_{0}^{1}$ if necessary). We define $$\Sf_{0} \ColonEqq \{\sigma_{J} : J = \{i, \ldots, i + 2k-1\}, k \in \mathbb{Z}^{+}\}.$$
\indent If $J = \{2l, \ldots, 2(l+k)\}$, for some $k \in \mathbb{Z}^{+}$, and $j = 2(l+k) +1$, then $i(\alpha_{j},\beta_{J}^{+}) = i(\alpha_{j},\beta_{J}^{-}) = 1$. Let $\mu_{j,J}^{+}$ be the boundary curve of a regular neighbourhood of $\{\alpha_{j},\beta_{J}^{+}\}$. Analogously, let $\mu_{j,J}^{-}$ be the boundary curve of a regular neighbourhood of $\{\alpha_{j},\beta_{J}^{-}\}$. We define $$\Mf \ColonEqq \{\mu_{j,J}^{\pm} : J = \{2l, \ldots, 2(l+k)\}, k \in \mathbb{Z}^{+}, j = 2(l+k) +1\}.$$
\indent Therefore, we define: $$\X \ColonEqq \Cf \cup \Df \cup \Sf_{T} \cup \Sf_{0} \cup \Bf_{T} \cup \Bf_{0} \cup \Mf.$$
\indent Recall that, as was mentioned in the Introduction, this set was proved to be rigid in \cite{Ara1}, and by construction has trivial pointwise stabilizer in $\EMod{S}$.
\subsection{Proof of Theorem \ref{Xexhausts}}\label{chap3sec2subsec3}
\indent The set $\X(S)$ is studied in \cite{Ara1} and \cite{Ara2}, and it is proven to be a finite rigid set of $\ccomp{S}$ (Theorems 5.1 and 6.1 in \cite{Ara1}). Also, by construction, we have that the principal sets used in Sections \ref{chap1} and \ref{chap2} ($\Cf \cup \Bf$ for closed surfaces, $\Cf \cup \Bf_{0}$ for punctured surfaces) are contained in their respective $\X(S)$, which gives us the proof of Theorem \ref{Xexhausts}:
\begin{proof}[\textbf{Proof of Theorem \ref{Xexhausts}}]
 Since $\Cf \cup \Bf \subset \X(S)$ for $S$ closed (and $\Cf \cup \Bf_{0} \subset \X(S)$ for $S$ a punctured surface), we have that $(\Cf \cup \Bf)^{k} \subset \X(S)^{k}$ for any $k \in \nat$ (analogously $(\Cf \cup \Bf_{0})^{k} \subset \X(S)^{k}$ for any $k \in \nat$). This implies that $\bigcup_{i \in \nat} (\Cf \cup \Bf)^{i} = \bigcup_{i \in \nat} \X(S)^{i}$ (analogously $\bigcup_{i \in \nat} (\Cf \cup \Bf_{0})^{i} = \bigcup_{i \in \nat} \X(S)^{i}$).\\
 \indent This coupled with Theorem \ref{Thm2} and \ref{Thm3} gives us the desired result.
\end{proof}
\indent Recalling that in Proposition 3.5 in \cite{Ara2}, Aramayona and Leininger prove that the rigid expansions of rigid set are themselves rigid, note that Theorem \ref{Xexhausts} gives and alternative proof of Theorem 1.1 in \cite{Ara2}.\\

\end{document}